\pgfplotsset{compat=1.17}
\def\colmet{gray!30!white} 
\def\colox{orange!40!white}
\def\colsol{blue!20!white}
\newcommand\dd{{\operatorname{d}}}
\newcommand\dt{\Delta t}
\newcommand\Fluxnum[1]{\mathcal{F}_{#1 \frac{1}{2}}^n}
\newcommand\h[1]{h_{#1 \frac{1}{2}}}
\renewcommand\v[2]{v_{#1 \frac{1}{2}}^{#2}}
\newcommand\Bern[3]{B\left(#3 L^n \h{#1}{} \v{#1}{#2}\right)}
\newcommand\x[2]{x_{#1 \frac{1}{2}}^{#2}}
\newcommand\R{\mathbb{R}}
\newcommand\Flux{\mathcal{F}}
\renewcommand\d[1]{\partial_{#1}}
\renewcommand\H{\mathcal{H}_{\phi}}
\newcommand\intmov{\int_{X_0(t)}^{X_1(t)}}
\newcommand\normeh[1]{{\Vert {#1} \Vert}_1}
\newcommand\normelh[1]{{\Vert {#1} \Vert}_{2,1}}
\newcommand\normehs[1]{{\Vert {#1} \Vert}_{-1}}
\newcommand\normelhs[1]{{\Vert {#1} \Vert}_{2,-1}}
\newcommand\ddisc{\partial_{t,\dt}}
\newcommand\Mm{{\mathcal M}}
\newcommand\Ff{{\mathcal F}}
\newcommand\bz{{\boldsymbol z}}
\newcommand\bu{{\boldsymbol u}}
\newcommand\bX{{\boldsymbol X}}
\newcommand\p{\partial}
\newcounter{cst}
\newcommand{\ctel}[1]{C_{\refstepcounter{cst}\label{#1}\thecst}}
\newcommand{\cter}[1]{C_{\ref{#1}}}
\newtheorem{theorem}{Theorem}[section]
\newtheorem{remark}[theorem]{Remark}
\newtheorem{lemma}[theorem]{Lemma}
\newtheorem{corollary}[theorem]{Corollary}
\newtheorem{proposition}[theorem]{Proposition}
\newtheorem{definition}{Definition}
\title{Finite Volumes for a dissipative free boundary problem}
\author{Cl\'ement Canc\`es}
\author{Claire Chainais-Hillairet}
\author{Am\'elie Dupouy}
\address{Univ. Lille, CNRS, Inria, UMR 8524 - Laboratoire Paul Painlevé, F-59000 Lille, France}
 \email{clement.cances@inria.fr, claire.chainais@univ-lille.fr, amelie.dupouy@inria.fr}
\begin{document}

\begin{abstract}
We study a toy model for the evolution of the oxygen concentration in an oxide layer. 
It consists in a transient convection diffusion equation in a one-dimensional domain of variable width. 
The motions of the boundaries are governed by the traces of the concentration. 
We exhibit a necessary and sufficient condition on the parameters involved in the model for the existence of a unique traveling-wave solution. 
Moreover, we show that the model admits some universal entropy structure, in the sense that any convex function of the concentration
 yields a dissipated free energy (up to exchanges with the outer environment at the boundaries).  
We propose then an implicit in time arbitrary Lagrangian-Eulerian finite volume scheme based on Scharfetter-Gummel fluxes. 
It is shown to be unconditionally convergent, to preserve exactly the travelling wave, and to dissipate all the aforementioned free energies. 
Numerical experiments show that our scheme is first order accurate in time and second order in space, and that the transient solution 
converges in the long-time limit towards the traveling-wave solution. 
\end{abstract}

\keywords{Free boundary problem, convection--diffusion, travelling wave, finite volumes}

\subjclass[2010]{35R35, 65M08, 65M12}

\maketitle

\section{Introduction}

\subsection{Motivation and context}

The so-called  \textit{Diffusion  Poisson Coupled Model} (DPCM) introduced in \cite{Bataillon_elec} aims at understanding finely the dynamics of an oxide (magnetite) layer at the 
surface of iron canisters to be stored in deep underground nuclear waste repositories. Because of its complexity --3 different charged species evolving in a free boundary domain with retroaction of a self-consistent electric potential-- and of its lack of mathematical structure --no Lyapunov functional could be identified so-far--, no satisfying well-posedness was obtained so far on the full model. Moreover, numerical experiments presented in~\cite{Bataillon_elec, Calipso} show that, at least for particular choices of the parameters, the transient system converges in the long-time limit towards a travelling wave profile. 

Global in time existence results in the simpler case where the boundaries are fixed are provided in~\cite{Chainais_dcds, Cances_ZAMP}, while the existence of a travelling wave profile is shown in \cite{ChainGall} assuming the electroneutrality of the oxide. The existence of a travelling wave accounting for the coupling with a self-consistent electric field is also rigorously established in~\cite{Breden} thanks to computer assisted proof technics, but only for some particular set of parameters. 

In this paper we study a simplified model with only one chemical species and where electrical interactions are neglected. The goal there is to take advantage of the apparent simplicity of the model to answer some questions that were out of reach in more complex models. Is the existence of a travelling wave profile automatic, or are there some required conditions on the parameters to ensure the existence of such a particular solution? Can we extract a Lyapunov functional providing enough control on the problem so that we can rigorously prove the existence of a solution to the system? Does the travelling wave profile act as an attractor in the long-time limit? 
Can one design a numerical scheme that preserves at the discrete level the key features of the continuous problem? And can one rigorously establish the convergence of the scheme?

We show in what follows that, even in our simplified setting, conditions are required on the parameters to make the existence of a travelling wave profile possible. 
In our greatly simplified context, we even get a necessary and sufficient condition for the existence of the travelling wave profile, for which we derive an explicit expression. 

Besides, the problem under study appears to have an infinite number of free energies, which all dissipate up to some exchange terms with the outer environment. 
Any evolution of the solution profile or of the interface location generate an entropy production, and even more: the evolution is driven by the dissipation of the (infinite family of) free energies. In mathematical words, the system we consider here is a generalized gradient flow, in the sense of~\cite{Mie11}, and more precisely a port-gradient system~\cite{Mielke} since there are exchanges between the oxide and its surrounding environment. 
From a thermodynamical perspective, the fluxes are driven by the variations of the so-called chemical potentials as in \cite{Cances_ZAMP}, whereas the motions of the interfaces are governed by the 
pressure (or equivalently by the grand potential), in good accordance with the literature~\cite{PV99, CJ05, LLRV09}. Excepted for a very particular choice of the parameters, the system cannot reach an equilibrium, in opposition to the closely related configurations studied  in~\cite{PP10} or~\cite{CCCE_IFB}. As the numerical evidences show, the system rather converges towards a travelling wave profile. 
Such travelling waves for convection diffusion problems are also of great interest in other contexts, like for instance in models for cell motility~\cite{RB23, AM_arXiv}. 

In order to approximate the solutions to the continuous problem, we introduce an implicit in time arbitrary Lagrangian Eulerian (ALE) finite volume scheme (see for instance~\cite{ALE}). In our one-dimensional context, this approach is equivalent to the one proposed in~\cite{Calipso, CMZ18}, where a change of variable is introduced to formulate the problem on the fixed spatial domain $(0,1)$. 
Even after this change of variable, the continuous problem remains of convection diffusion type. We employ Scharfetter-Gummel type fluxes~\cite{ScharfGumm, Il'in69, Chatard_FVCA6}. These fluxes exactly capture the travelling wave, and dissipate all the convex free energies of the system. Building on these strong properties, we 
establish the convergence of the scheme towards a weak solution thanks to compactness arguments. 

We highlight in numerical experiments that the scheme is second order accurate in space and first order in time as expected. Furthermore, we observe that, in the long-time asymptotic regime, the solution to the problem converges exponentially fast towards the travelling wave profile (provided that the latter exists). Unfortunately, we have been unable to rigorously prove this result so far. Indeed, the port-gradient nature of the problem makes usual approaches such as~\cite{GajewGrog, GlitzGrogHunl, AMTU01, Glitz1, Glitz2, BC14} fail in our case. 
This will be the purpose of future work.

\subsection{Presentation of the continuous model}

In the simplified model studied in this paper, one chemical species --that should be thought as oxygen in the context of corrosion-- evolves inside the one-dimensional oxide layer represented by the interval $(X_0(t), X_1(t))$, see Figure~\ref{fig:geometry}. The oxide is surrounded by two different outer environments, modeling 
an aqueous solution for $x < X_0(t)$ and a metal subject to oxidation for $x>X_1(t)$.
Oxygen, the concentration of which being denoted by $u$ in the oxide, can enter the oxide layer from the solution, diffuse within the oxide, and 
then transform the metal into oxide thanks to an oxidation process. 

\begin{figure}[htb]
\centering
\begin{tikzpicture}[scale = 1]
\draw[fill = \colsol, color = \colsol, line width = 1pt] 
(0,0) -- (0,1) -- (3,1) -- (3,0) -- cycle;
\draw[fill = \colox, color = \colox, line width = 1pt] 
(3,0) -- (3,1) -- (5,1) -- (5,0) -- cycle;
\draw[fill = \colmet, color = \colmet, line width = 1pt] 
(5,0) -- (5,1) -- (7,1) -- (7,0) -- cycle;

\draw[color = red, line width = 1pt](3,0) -- (3,1);
\draw[color = red, line width = 1pt](5,0) -- (5,1);
\draw (1.5,.5) node{Solution};
\draw (6,.5) node{Metal};
\draw (4,.5) node{Oxide};

\draw[>=latex, <-] (3,1.5) -- (3.8, 1.9);
\draw[>=latex, <-] (5,1.5) -- (4.2, 1.9);
\draw (4,1.9) node[above]{Mobile interfaces};
\draw (3,1) node[above]{$\color{black}x=X_0(t)$};
\draw (5,1) node[above]{$\color{black}x=X_1(t)$};
\end{tikzpicture}
\caption{Schematic representation of the oxide in its environment.}\label{fig:geometry}
\end{figure}
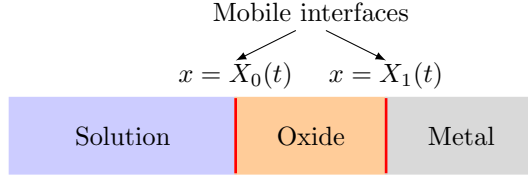

From a mathematical perspective, the problem amounts to find $(u, X_0, X_1)$, where $X_\ell : [0,T) \rightarrow \R$ for $\ell \in \{0,1\}$, and
\[
u: Q_T = \bigcup_{t \in [0,T)} \left( \{t\} \times [X_0(t), X_1(t)] \right)  \to \R_{\geq 0}
\]
with $T>0$, 
such that, for any $(t,x)$ in $Q_T$, it holds
\begin{subequations} \label{eq:sys_mov}
\begin{align}
\d{t} u(t,x) + \d{x} J(t,x) & = 0,\label{eq:u} \\
J(t,x) &= - \d{x} u(t,x) + (1-R) X'_1(t) u(t,x),  \label{eq:def_J}\\
J(t, X_0(t)) - u(t, X_0(t))X_0'(t)  & = a - b\, u(t,X_0(t)),  \label{eq:flux_bord0}\\
J(t, X_1(t)) - u(t, X_1(t))X_1'(t) &= 0, \label{eq:flux_bord1}\\
X'_0(t) &= \alpha_0 - \beta_0\, u(t,X_0(t)) + (1-R) X'_1(t), \label{eq:X0_cont}\\
X'_1(t) & = - \alpha_1 + \beta_1\, u(t,X_1(t)), \label{eq:X1_cont}\\
u(0,x) & = u^\text{init}(x), \label{eq:CI} \\
X_0(0) & = 0, \label{eq:CI.X0} \\
X_1(0) & = L^0, \label{eq:CI.X1} 
\end{align}
\end{subequations}

The first equation \eqref{eq:u} is the conservation law on the concentration of oxygen inside the oxide. The flux $J$ in \eqref{eq:def_J} is composed of two terms: a diffusion flux, and a transport term related to the movement of the oxide with respect to the reference frame attached to the metal. The frame of the oxide moves in this reference frame with velocity 
\[
v_\text{ox}(t) = (1 - R) X'_1(t),
\] where $R$ is the so-called Pilling-Bedworth ratio encoding the difference in molar volumes between the metal and oxide. $R$ is assumed to be a given positive constant.

The conservative fluxes $\mathcal{F}_\ell$ at the interfaces $X_\ell$ are defined by 
	\begin{equation} \label{eq:rel_flux}
		\Flux_\ell(t) = J(t, X_\ell(t)) - u(t, X_\ell(t))X'_\ell(t) \quad \textnormal{for} \quad \ell \in \{0,1\}.
	\end{equation}
There is no oxygen exchange between the oxide and the metal, which transcripts through the condition on the conservative fluxes in \eqref{eq:flux_bord1}. The other condition \eqref{eq:flux_bord0} is obtained thanks to the mass action law, that becomes in this case a Robin condition at the interface between the solution and the oxide. In~\eqref{eq:flux_bord0}, the kinetic parameters $a$ and $b$ are given and positive.

Both \eqref{eq:X0_cont} and \eqref{eq:X1_cont} describe the evolution of the oxide boundaries along time. They involve the positive given kinetic parameters $\alpha_0, \beta_0, \alpha_1$, and $\beta_1$. The growth of the oxide over the metal is given by \eqref{eq:X1_cont}, which comes from the mass action law, and \eqref{eq:X0_cont} combines both the oxide frame velocity $v_\text{ox}$, and the dissolution velocity, denoted $v_\text{d}$, i.e. 
\[
X_0'(t) = v_\text{ox}(t) + v_\text{d}(t), \quad \text{with} \quad
v_\text{d}(t) = \alpha_0 - \beta_0 u(t, X_0(t)).
\]
Throughout the paper, we denote by $L(t) = X_1(t) - X_0(t) >0$ the thickness of the oxide. 
Since the problem is translation invariant, the initial value of the solution-oxide layer interface is set to 0 for simplicity in~\eqref{eq:CI.X0}. 
The initial position of the interface $X_1$ prescribed by~\eqref{eq:CI.X1} then directly encodes the initial thickness $L^0>0$ of the oxide layer. 
Finally, the initial concentration profile $u^\text{init}$ appearing in~\eqref{eq:CI} is assumed to be positive and bounded.

\subsection{Presentation of the numerical scheme}

Before defining our numerical scheme, let us first introduce some notation. We set a finite time horizon $T>0$, the value of which will be discussed later on. 
We also introduce a mesh of $[0,1]$, composed of $I > 0$ cells $(\xi_{i-\frac{1}{2}}, \xi_{i+\frac{1}{2}})$ such that
	\begin{equation*}
		0 = \xi_{\frac{1}{2}} < \xi_{\frac{3}{2}} < ... < \xi_{I- \frac{1}{2}} < \xi_{I+ \frac{1}{2}} = 1.
	\end{equation*}
	The size of each cell is denoted $h_i =\xi_{i+\frac{1}{2}} - \xi_{i-\frac{1}{2}}$ for $1 \leq i \leq I$, and the cell centers $\xi_i$, $0 \leq i \leq I$, are defined as 
	\[
	\xi_i = \frac{\xi_{i-\frac{1}{2}} + \xi_{i+\frac{1}{2}}}{2} \textnormal{ for } 1 \leq i \leq I, \qquad \xi_0 = \xi_{\frac{1}{2}}, \quad \text{and}\quad \xi_{I+1} = \xi_{I+ \frac{1}{2}}.
	\]
The distance between two consecutive cell centers $\xi_i$ is denoted by  $\h{i+}{} = \xi_{i+1} - \xi_i$ for $0 \leq i \leq I+1$. 
The mesh size $h$ is then defined as $h=\max_{1 \leq i \leq I} h_i$.

For notation convenience, we consider uniform time discretization. 
The time step is denoted $\dt$, and it is assumed that there exists an integer $N$ such that $N \dt = T$. The sequence $(t_n)_{0 \leq n \leq N}$ is defined as $t_n = n \dt$ for each $0 \leq n \leq N$. 
	
	Based on the $(0,1)$ mesh, at each set time $t > 0$, a mesh of $(X_0(t), X_1(t))$ is defined, with
	\(
		\x{i+}{}(t) = X_0(t) + L(t) \xi_{i+\frac{1}{2}}.
	\)

	It is then possible to look at the evolution of the oxygen concentration in a mesh cell $[\x{i-}{}(t),\x{i+}{}(t)]$, with $1 \leq i \leq I$. Using \eqref{eq:u}, it comes that
	\begin{equation} \label{eq:cons_mass}
		\frac{\dd}{\dd t} \int_{\x{i-}{}(t)}^{\x{i+}{}(t)} u(t,x) \dd x = - \mathcal{F}_{i+\frac{1}{2}}(t) + \mathcal{F}_{i-\frac{1}{2}}(t),
	\end{equation}
	where
	$\mathcal{F}_{i+\frac{1}{2}}(t) = J(t, \x{i+}{}(t)) - u(\x{i+}{}(t))\x{i+}{'}(t).$
	Bearing in mind the expression~\eqref{eq:def_J} of the flux $J$, we obtain
	\begin{equation*}
		\mathcal{F}_{i+\frac{1}{2}}(t) = - \d{x}u(t, \x{i+}{}(t)) + u(t, \x{i+}{}(t))\v{i+}{}(t), 
	\end{equation*}
	with the velocity $\v{i+}{}$ being defined by 
	\begin{equation}\label{eq:vi+1/2.cont}
		\v{i+}{}(t) = (1- R) X_1'(t) - L'(t)\xi_{i+\frac{1}{2}} - X_0'(t).
	\end{equation}
	
The initial conditions~\eqref{eq:CI}--\eqref{eq:CI.X1} are discretized into 
\begin{equation}\label{eq:u0.disc}
X_0^0 = 0, \quad X_1^0 = L^0, \quad\text{and}\quad  u_{i}^0 = \frac{1}{L^0 h_i}\int_{L^0\xi_{i-\frac12}}^{L^0\xi_{i+\frac12}} u^\text{init}(x) \dd x.
\end{equation} 

	A finite volume method is used to approximate the left-hand side of \eqref{eq:cons_mass}, and an implicit Euler method is used for the time discretization. 
	This leads to the following numerical scheme:
\begin{subequations} \label{eq:scheme}
\begin{align}
	\label{eq:scheme_u}
		h_i \frac{L^n u_i^n - L^{n-1} u_i^{n-1}}{\dt} + \Fluxnum{i+} - \Fluxnum{i-} &= 0, \quad \textnormal{ for } 1 \leq i \leq I, \\
	\label{eq:left_cond_flux}
		\Fluxnum{} - a + bu_0^n &= 0, \\
	\label{eq:right_cond_flux}
		\Fluxnum{I+} &= 0, \\
	\label{eq:X0}
	\frac{X_0^n - X_0^{n-1}}{\dt} - \alpha_0 + \beta_0 u_0^{n} - (1-R) \frac{X_1^n - X_1^{n-1}}{\dt} &= 0, \\
	\label{eq:X1}
		\frac{X_1^n - X_1^{n-1}}{\dt} + \alpha_1 - \beta_1 u_{I+1}^{n} &= 0, \\
		L^n - X_1^n + X_0^n &= 0.\label{eq:L}
\end{align}
\end{subequations}
As already mentioned in the introduction, we use Scharfetter-Gummel fluxes \cite{ScharfGumm}, that involve the 
so-called Bernoulli function defined by 
\(B(r) = \dfrac{r}{e^r-1}\), with continuous extension \(B(0) = 1\), in their definition:
	\begin{equation} \label{eq:def_flux_num}
		\Fluxnum{i+} = \frac{1}{L^n\h{i+}} \left( \Bern{i+}{n}{-} u_i^n - \Bern{i+}{n}{} u_{i+1}^n \right).
	\end{equation}
	In the above formula, the discrete velocities $\v{i+}{n}$ are natural discrete counterparts of the semi-discrete quantities~\eqref{eq:vi+1/2.cont}. Introducing the notation $\delta^n[f]=\frac{f^n-f^{n-1}}{\dt}$ for $f=X_0, X_1, L$, they write  
	\begin{equation}
	    \label{eq:def_v}
		\v{i+}{n} = (1-R) \delta^n [X_1] - \xi_{i+\frac{1}{2}} \delta^n [L] - \delta^n [X_0].
	\end{equation} 
	
	The choice of the specific expression~\eqref{eq:def_flux_num} for the numerical fluxes is motivated mainly by the fact that it guarantees the preservation of steady-state profiles such as the travelling wave in the case of this paper (see Section \ref{sec:tw}), while reaching the optimal second order accuracy in space~\cite{LMV96}.  More specifically, the Bernoulli function has some computational properties listed below that will be useful in the rest of the paper. 
	\begin{lemma} \label{lem:Bern}
		The Bernoulli function is $1$-Lipschitz continuous, positive, decreasing and satisfies:
		\begin{enumerate}[label=(\textit{\roman*})]
			\item For $r \in \R$, $B(-r) - B(r) = r$;
			\item For $r \in \R$, $B(r)e^r = B(-r)$;
			\item For $\theta \in [0,1]$, $p,q,r \in \R$:
				\begin{equation} \label{eq:Bern}
					B(-r)p - B(r)q = (\theta B(r) + (1-\theta) B(-r))(p-q) + r((1-\theta)q + \theta p).
				\end{equation}
		\end{enumerate}
	\end{lemma}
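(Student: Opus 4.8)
The plan is to separate the purely algebraic identities (i)--(iii) from the analytic properties (positivity, monotonicity, Lipschitz continuity), and to make the three identities cascade from a single computation. I would first establish (ii) directly from the definition: since $B(-r) = \frac{-r}{e^{-r}-1}$, multiplying numerator and denominator by $e^r$ gives $B(-r) = \frac{r\,e^r}{e^r-1} = e^r B(r)$, which is exactly (ii). Identity (i) is then immediate, as
\[
B(-r) - B(r) = B(r)\bigl(e^r - 1\bigr) = \frac{r}{e^r-1}\,(e^r-1) = r .
\]
Finally, (iii) is a linear identity in $p$ and $q$: expanding its right-hand side and collecting coefficients, the identity for the $p$-term reduces to $\theta\,(B(r)-B(-r)+r)=0$ and that for the $q$-term to $(1-\theta)\,(B(r)-B(-r)+r)=0$, both of which hold by (i). The value $r=0$ is covered throughout by the continuous extension $B(0)=1$, for which each identity degenerates to a triviality.

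For the analytic statements, positivity is clear from the sign of $\tfrac{r}{e^r-1}$: numerator and denominator share the same sign for $r\neq 0$, and $B(0)=1>0$. Monotonicity and the Lipschitz constant both follow from bounding the derivative, which a direct computation gives as
\[
B'(r) = \frac{e^r - 1 - r\,e^r}{(e^r-1)^2} \qquad (r \neq 0).
\]
For the sign I would set $N(r) = e^r - 1 - r\,e^r$; then $N(0)=0$ and $N'(r) = -r\,e^r$, so $N$ increases on $(-\infty,0)$ and decreases on $(0,\infty)$, whence $N(r)<0$ for all $r\neq 0$ and therefore $B'(r)<0$, i.e.\ $B$ is strictly decreasing. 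For the unit Lipschitz bound I would show $B'(r)\geq -1$, equivalently $N(r)+(e^r-1)^2\geq 0$; this quantity simplifies to $e^r\,(e^r-1-r)$, which is nonnegative since $e^r>0$ and $e^r \geq 1+r$. Together with $B'<0$ this yields $|B'(r)|\leq 1$, so $B$ is $1$-Lipschitz.

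The only slightly delicate point is the behaviour at $r=0$, where the expression for $B'$ is indeterminate. I would dispatch it by the expansion $e^r - 1 = r\,(1 + r/2 + r^2/6 + \cdots)$, which gives $B(r) = 1 - r/2 + r^2/12 + O(r^3)$ near the origin; hence $B$ is $C^1$ with $B'(0)=-1/2 \in [-1,0)$, consistent with the bounds above, and all the claimed properties extend across $0$ by continuity. The work is thus not concentrated in any single hard step but in organizing the derivative estimates so that both monotonicity and the sharp Lipschitz constant reduce to the elementary convexity inequality $e^r \geq 1+r$; that reduction is the part I would be most careful to set up cleanly.
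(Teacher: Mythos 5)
Your proof is correct: the derivation of (ii) from the definition, the cascade (ii) $\Rightarrow$ (i) $\Rightarrow$ (iii) by matching coefficients of $p$ and $q$, and the derivative estimates $-1 \leq B'(r) < 0$ reduced to $e^r \geq 1+r$ (with the Taylor expansion handling the removable singularity at $r=0$) are all sound. The paper states Lemma~\ref{lem:Bern} without proof, treating these as standard computational facts about the Bernoulli function, so there is no argument in the paper to compare against; your write-up supplies a complete and correctly organized justification.
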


\subsection{Outline of the paper}

The aim of this paper is to conduct both a theoretical and a numerical analysis of the simplified corrosion model \eqref{eq:sys_mov}. One main goal is to prove the existence of a solution to the system \eqref{eq:sys_mov}, which is obtained thanks to the numerical analysis of the scheme \eqref{eq:scheme}. 
	
In a first part, the existence and the uniqueness of a travelling wave profile, as presented in \cite{ChainGall}, is established under some necessary and sufficient condition on the parameters of the model. Then, a result of decreasing free energy is established in Proposition \ref{prop:energy_dec}, and leads to several bounds on any solution of \eqref{eq:sys_mov} (see Proposition \ref{prop:u_bounds}).
	
Then, the analysis of the numerical scheme \eqref{eq:scheme} is conducted, beginning with the establishment of Proposition \ref{prop:disc_energ_dec}, which is a discrete counterpart of Proposition \ref{prop:energy_dec}. One can then prove the same bounds as for the continuous case with an analogous reasonning. This, combined with bounds on the domain size (see Proposition \ref{prop:Ln_bounds_non_unif}) leads to the existence of a solution to the numerical scheme \eqref{eq:sys_mov} (Theorem \ref{th:exist_scheme}). Some numerical experiments are provided in Section \ref{sec:simu} to illustrate the behavior of an approximate solution on several test cases. The numerical simulations also illustrate the long-time behaviour of the approximate solution, that numerically converges to a travelling wave profile, recovering the results stated in \cite{Bataillon_elec} and \cite{Calipso}.
	
The main result of the paper is the convergence of the solution of the scheme \eqref{eq:scheme} towards a weak solution of the system \eqref{eq:sys_mov} (Theorem \ref{th:conv}), and it is the goal of Section \ref{sec:conv}.
\section{Some properties of the continuous model}

\indent

In this section, the goal is to study the continuous model \eqref{eq:sys_mov}. First, the notion of travelling wave is defined, and its existence is proved in Proposition \ref{th:trav_wave}. In a second part, the non-decreasing behaviour of free energies associated to the problem (see Proposition \ref{prop:energy_dec}) leads to various energy estimates, particularly $L^\infty$ bounds, that are presented in Proposition \ref{prop:u_bounds}.
	\subsection{Travelling wave profile}
	The existence of a particular solution, that can be seen as a non-equilibrium steady state (as in \cite{Mielke} for example), is presented in this subsection. The existence of this particular profile, called a travelling wave profile, has been studied in \cite{Breden} for the general DPCM (\cite{Bataillon_elec}) . In the case of the model \eqref{eq:sys_mov}, the notion of travelling wave is defined as follows.
	\begin{definition}
		A triplet $(u, X_0, X_1)$ is called a travelling wave solution to \eqref{eq:sys_mov} if there exist a profile $\hat{u} : \R \rightarrow \R_{\geq 0}$, a velocity $\hat{c} \in \R$ and a width $\hat{L} \in \R_{>0}$ such that for all $t \in \R_{\geq 0}$
		\begin{equation} \label{eq:tw_bord}
			X_0'(t) = X_1'(t) = \hat{c} \textnormal{ and } X_1(t) - X_0(t) = \hat{L},
		\end{equation}
		and for all $(t,x) \in \R_{\geq 0} \times \bigcup_{t \in \R_{\geq 0}}(X_0(t), X_1(t))$
		\begin{equation} \label{eq:tw_u}
			u(t,x) = \hat{u}(x-\hat{c}t).
		\end{equation}
	\end{definition} 
	\begin{proposition} \label{th:trav_wave}
		Assume that the set of parameters satisfies either
		\begin{equation} \label{hyp} 
						\frac{\alpha_0 + R \alpha_1}{\beta_0 + R \beta_1} < \frac{a}{b} < \frac{\alpha_0}{\beta_0},
		\end{equation}
		or 
		\begin{equation} \label{hyp_inv} 
					\frac{\alpha_0 + R \alpha_1}{\beta_0 + R \beta_1} > \frac{a}{b} > \frac{\alpha_0}{\beta_0}.
		\end{equation}
		Then, there exists a unique travelling wave solution to \eqref{eq:sys_mov}, explicitly given by 
		\begin{subequations}\label{eq:tw..}
		\begin{align}
		\label{eq:sol_prog}
			\hat{u}(y) &= \frac{a}{b}e^{-R\hat{c}y } \: \textit{ for } y \in [0, \hat{L}], \\
		\label{eq:tw_velocity}
			\hat{c} & = \dfrac{1}{R}\left(\alpha_0 - \beta_0 \dfrac{a}{b}\right), \\
		\label{eq:tw_size}
			\hat{L} &= - \dfrac{1}{R\hat{c}} \log \left( \dfrac{b}{\beta_1 a} \left( \alpha_1 +\hat{c} \right) \right). 
		\end{align}
		\end{subequations}
		Under assumption 
		\begin{equation}\label{eq:equilibrium}
		 \frac{a}{b} = \frac{\alpha_0}{\beta_0}= \frac{\alpha_1}{\beta_1}, 
		 \end{equation}
		 there exists an infinite number of constant steady solutions with $\hat c = 0$, $\hat u(y) = \frac ab$ for all $y \in [0,\hat L]$ with $\hat L>0$ being arbitrary. 
		 If neither~\eqref{hyp}, \eqref{hyp_inv} nor \eqref{eq:equilibrium} hold true, then there exists no travelling wave solution. 
	\end{proposition}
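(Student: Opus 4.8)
The plan is to insert the travelling-wave ansatz \eqref{eq:tw_bord}--\eqref{eq:tw_u} into the governing equations and reduce the free boundary problem to a short chain of algebraic constraints that determine $\hat u$, $\hat c$ and $\hat L$ uniquely. First I would set $y = x - \hat c t$ and, using $X_0(0)=0$ together with $X_0'=X_1'=\hat c$, observe that the interfaces sit at $y=0$ and $y=\hat L$. Plugging $u(t,x)=\hat u(y)$ and $X_1'(t)=\hat c$ into \eqref{eq:u}--\eqref{eq:def_J} yields the linear ODE $\hat u'' + R\hat c\,\hat u' = 0$ on $(0,\hat L)$, whose general solution for $\hat c\neq 0$ is $\hat u(y)=A+Be^{-R\hat c y}$. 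The key observation is that the conservative flux \eqref{eq:rel_flux} reduces to $\mathcal F_\ell = -(\hat u' + R\hat c\,\hat u)$, which is constant in $y$ by the ODE; hence \eqref{eq:flux_bord0}--\eqref{eq:flux_bord1} force this constant to vanish, giving $A=0$, and then \eqref{eq:flux_bord0} with $\hat u(0)=B$ gives $B=a/b$. This produces the profile \eqref{eq:sol_prog}, and the positivity $\hat u\ge 0$ is automatic.

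Next I would extract $\hat c$ and $\hat L$ from the interface dynamics. Substituting $\hat u(0)=a/b$ into \eqref{eq:X0_cont} and using $X_0'=X_1'=\hat c$ cancels the convective coupling and gives $R\hat c = \alpha_0 - \beta_0\, a/b$, that is \eqref{eq:tw_velocity}; in particular $\hat c$ is uniquely determined, and its sign is governed by the position of $a/b$ relative to $\alpha_0/\beta_0$. Evaluating \eqref{eq:X1_cont} at the right interface with $\hat u(\hat L)=\tfrac ab e^{-R\hat c\hat L}$ produces the transcendental relation $\alpha_1 + \hat c = \tfrac{\beta_1 a}{b}\, e^{-R\hat c\hat L}$, which I would solve for $\hat L$ to obtain \eqref{eq:tw_size}. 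At this point existence and uniqueness of the travelling wave are reduced to the solvability of this last equation for a genuine width $\hat L>0$.

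The heart of the argument is then the sign analysis of \eqref{eq:tw_size}. Writing $P := \frac{b}{\beta_1 a}(\alpha_1+\hat c)$, the value $\hat L = -\frac{1}{R\hat c}\log P$ is a well-defined positive number precisely when $P>0$ and $\log P$ has sign opposite to $\hat c$. A short computation identifies the borderline $P=1$ with the threshold appearing in \eqref{hyp}: indeed $P<1 \Leftrightarrow \frac{\alpha_0+R\alpha_1}{\beta_0+R\beta_1} < \frac ab$. Combining this with the sign of $\hat c$ (controlled by $\frac ab$ versus $\frac{\alpha_0}{\beta_0}$), the regime $\hat c>0$ forces $0<P<1$, which is exactly \eqref{hyp}, while $\hat c<0$ forces $P>1$, which is exactly \eqref{hyp_inv}. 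Conversely, whenever these strict inequalities fail one obtains either $\hat L\le 0$, a non-positive argument in the logarithm, or the degenerate slope $\hat c=0$, so no admissible travelling wave exists.

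Finally I would treat the degenerate case $\hat c = 0$, i.e.\ $\frac ab = \frac{\alpha_0}{\beta_0}$, separately: the ODE degenerates to $\hat u''=0$, the vanishing flux forces $\hat u\equiv a/b$, and \eqref{eq:X1_cont} is solvable only if in addition $\frac ab=\frac{\alpha_1}{\beta_1}$, in which case $\hat L>0$ remains entirely free, yielding the continuum of equilibria under \eqref{eq:equilibrium}. To see that \eqref{hyp} and \eqref{hyp_inv} are mutually exclusive and exhaust the non-degenerate possibilities, I would note that $\frac{\alpha_0+R\alpha_1}{\beta_0+R\beta_1}$ is the mediant of $\frac{\alpha_0}{\beta_0}$ and $\frac{\alpha_1}{\beta_1}$ and hence lies strictly between them, so exactly one chain of inequalities can hold depending on the ordering of $\frac{\alpha_0}{\beta_0}$ and $\frac{\alpha_1}{\beta_1}$. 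The main obstacle I anticipate is not any single computation but the bookkeeping in this last step: organizing the sign cases for $\hat c$ and for $\log P$ so that the three regimes \eqref{hyp}, \eqref{hyp_inv} and \eqref{eq:equilibrium} are shown simultaneously necessary and sufficient, with the boundary value $P=1$ (where $\hat L=0$) correctly excluded.
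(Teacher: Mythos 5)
Your proposal is correct and follows essentially the same route as the paper's proof: insert the travelling-wave ansatz, deduce that the conservative flux vanishes identically, obtain the exponential profile with $\hat u(0)=a/b$, extract $\hat c$ from \eqref{eq:X0_cont} and $\hat L$ from \eqref{eq:X1_cont}, and perform the same sign analysis on $P=\frac{b}{\beta_1 a}(\alpha_1+\hat c)$ versus $1$ according to the sign of $\hat c$, treating $\hat c=0$ separately. The mediant remark on the mutual exclusivity of \eqref{hyp} and \eqref{hyp_inv} is a pleasant supplement but does not change the argument.
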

	\begin{proof}
		Suppose that there exists a solution $(u, X_0, X_1)$ that satisfies \eqref{eq:tw_bord}-\eqref{eq:tw_u}. 		The equation \eqref{eq:u} can then be rewritten in terms of $\hat{u}$ as
		\begin{equation*} 
			- \d{y}^2 \hat{u} - R\hat{c} \d{y} \hat{u}=\d{y}\hat{\Flux} = 0, 
		\end{equation*}
		where we have set $\hat{\Flux} = - \d{y}\hat{u} - R\hat{c}\,\hat{u}$.
		Since the boundary condition \eqref{eq:flux_bord1} at $X_1$ gives $\hat{\Flux}(X_1(t)) = 0$ for any $t$, it leads to $\hat{\Flux} = 0$.
		We further deduce from the other boundary condition \eqref{eq:flux_bord0} that 
\(
			\hat{u}(0)=\dfrac{a}{b}.
\)
		Hence, the travelling wave profile is given by \eqref{eq:sol_prog}. 
		
		Plugging this expression in \eqref{eq:X0_cont} gives exactly the relation \eqref{eq:tw_velocity} for $\hat{c}$. We note that the sign of $\hat{c}$ coincides with the sign of $ \dfrac{\alpha_0}{\beta_0}-\dfrac{a}{b}$.
		
		Assume first that $\hat{c} = 0$, or equivalently that 
		$\dfrac{a}b = \dfrac{\alpha_0}{\beta_0}$, then we deduce from~\eqref{eq:sol_prog} that $\hat{u}(y)= \dfrac{a}{b}$ for all $y \in [0,\hat L]$. This is compatible with 
		 \eqref{eq:X1_cont} if and only if~\eqref{eq:equilibrium} holds.
		 Under this latter condition, the size of the domain $\hat{L}$ can be chosen arbitrarily as long as it remains positive. 
		 In other words, there exists an infinite number of constant equilibria under condition~\eqref{eq:equilibrium}. 

		Assume now that $\hat{c} \neq 0$. 
		We infer from~\eqref{eq:X1_cont} that 
		\(
		\dfrac{b(\alpha_1 + \hat c)}{a \, \beta_1} = e^{- R \hat c \hat L}. 
		\)
		This relation can be inverted into~\eqref{eq:tw_size} if and only if the left-hand side is positive, i.e. if $\alpha_1 + \hat c>0$. 
		This property is automatically satisfied if $\hat c>0$, i.e. when $\dfrac{\alpha_0}{\beta_0} > \dfrac{a}{b}$. But formula~\eqref{eq:tw_size} gives a positive thickness $\hat L$ if and only if 
		\[
		\frac{b(\alpha_1 + \hat c)}{\beta_1 a} < 1 \quad \Leftrightarrow \quad \frac{\alpha_0 + R \alpha_1}{\beta_0 + R \beta_1} < \frac{a}{b}.
		\]
		Similarly, in the case $\hat c<0$, i.e. when $\dfrac{\alpha_0}{\beta_0} < \dfrac{a}{b}$, the thickness $\hat L$ provided 
		by formula~\eqref{eq:tw_size} is positive if and only if 
		\[
		\frac{b(\alpha_1 + \hat c)}{\beta_1 a} > 1 \quad \Leftrightarrow \quad \frac{\alpha_0 + R \alpha_1}{\beta_0 + R \beta_1} > \frac{a}{b}.
		\]
		This last condition is in this case more stringent than $\alpha_1 + \hat c>0$, so that we end up with the simple conditions \eqref{hyp} or \eqref{hyp_inv} on the parameters. 

		To conclude this proof, one readily check that, when \eqref{hyp} or \eqref{hyp_inv} hold true, then $u(t,x) = \hat u(x - \hat c t)$, $X_0(t)= \hat c t $ 
		and $X_1(t) = \hat L + \hat c t$ with $\hat u, \hat c$ and $\hat L$  given by~\eqref{eq:tw..} is a solution to the continuous problem~\eqref{eq:sys_mov}. 
		\end{proof}
	\begin{remark}
		In the rest of the paper, the study is carried out under assumption \eqref{hyp}, which corresponds to the physical configuration encountered in the 
		corrosion process, where metal is transformed into oxide, which itself dissolves in the solution.
Moreover, remark that the assumption \eqref{hyp} immediately implies that $\dfrac{\alpha_1}{\beta_1} < \dfrac{\alpha_0}{\beta_0}$.
	\end{remark}
	\subsection{Energy estimates}
	In order to obtain some a priori estimates on the function $u$ solution to \eqref{eq:sys_mov}, we study the evolution in time of some free energies. In our case, any convex function can generate a free energy for the system. But, as the system is not isolated, in the sense that 
	there are exchanges of mass and of volume with the outer environments, the free energy is not decreasing along time. To circumvent this difficulty, we 
	follow the path proposed in~\cite{Cances_ZAMP} as 
	we introduce in the following definition what we refer to as the total free energy, encompassing contributions from the outer environments. 
	\begin{definition}
	Let $\phi \in \mathcal{C}^2(\R)$ be an arbitrary convex function. The free energy $\H$ associated to $\phi$ corresponding to the state $(u(t), X_0(t), X_1(t))$ is defined by
	\begin{equation}\label{eq:H}
	\H(t) = \intmov \phi(u(t,x)) {\text{d}x}, \qquad t \geq 0. 
	\end{equation}
	Moreover, defining the pressure function $\pi_\phi$ by $\pi_\phi(r) = r \phi'(r) - \phi(r)$ for $r \in \R$, the total free energy associated to $\phi$ is defined as
		\begin{multline}
			\H^{\text{tot}}(t) = \H(t) + R\pi_\phi\left(\frac{\alpha_1}{\beta_1}\right) (X_1(t)-X_1(0)) \\
			- \pi_\phi\left(\frac{\alpha_0}{\beta_0}\right) \int_0^t (\alpha_0-\beta_0 u(t,X_0(t)) - \phi'\left(\frac{a}{b}\right) \int_0^t (a - b u(t, X_0(t))).
		\end{multline}
	\end{definition}
	\noindent
	\begin{remark} \label{rk:pi_croiss}
		Since $\pi_\phi'(r) = r \phi''(r)$ with $\phi$ convex, $\pi_\phi$ is non-decreasing over $\R_{\geq 0}$.
	\end{remark}
	\noindent
	The following proposition is proved with formal computations, assuming that $u \geq 0$,  $X_0$ and $X_1$ exist and are regular enough. 
	\begin{proposition} \label{prop:energy_dec}
		 Assume that there exists a solution $(u, X_0, X_1)$ to the system \eqref{eq:sys_mov} such that $u$ is non-negative and $X_1-X_0$ positive and let $\phi \in \mathcal{C}^2(\R)$. Then, there exists a non-negative function $\mathcal{D}_\phi$ such that, for any $t > 0$,
		\begin{equation*}
			\dfrac{d}{dt} \H^{\text{tot}}(t) = -\mathcal{D}_\phi(t) \leq 0.
		\end{equation*}
	\end{proposition}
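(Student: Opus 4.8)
The plan is to compute $\tfrac{d}{dt}\H^{\text{tot}}(t)$ directly and to exhibit $\mathcal{D}_\phi$ as an explicit sum of manifestly non-negative terms. Throughout I write $u_\ell = u(t,X_\ell(t))$ for the traces, $\ell \in \{0,1\}$. First I would differentiate $\H(t)=\intmov \phi(u(t,x))\,\dd x$ by the Leibniz (Reynolds transport) rule for a moving interval, which produces the bulk term $\intmov \phi'(u)\d{t}u\,\dd x$ together with the boundary contributions $\phi(u_1)X_1'(t) - \phi(u_0)X_0'(t)$ coming from the motion of the endpoints. Using the conservation law \eqref{eq:u} in the form $\d{t}u = -\d{x}J$ and integrating by parts, the bulk term splits into a boundary part $-[\phi'(u)J]_{X_0(t)}^{X_1(t)}$ and an interior part $\intmov \phi''(u)(\d{x}u)\,J\,\dd x$, so the chemical potential $\phi'(u)$ appears naturally at the interfaces.

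The next step is to feed in the boundary and interface laws. At $X_1$, the no-flux condition \eqref{eq:flux_bord1} gives $J = u_1X_1'$, so the $X_1$ boundary terms combine into a single pressure term $-X_1'\pi_\phi(u_1)$; at $X_0$, condition \eqref{eq:flux_bord0} gives $J = u_0X_0' + a - bu_0$, producing a pressure term $X_0'\pi_\phi(u_0)$ and a chemical-potential exchange term $\phi'(u_0)(a - bu_0)$. For the interior integral, inserting $J = -\d{x}u + (1-R)X_1'u$ from \eqref{eq:def_J} yields the diffusive contribution $-\intmov \phi''(u)(\d{x}u)^2\,\dd x$, whose integrand is non-negative by convexity of $\phi$, plus a convective piece. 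The key algebraic observation is that $\pi_\phi'(r)=r\phi''(r)$, hence $\phi''(u)u\,\d{x}u = \d{x}\pi_\phi(u)$, so the convective piece integrates to a boundary pressure difference $(1-R)X_1'[\pi_\phi(u_1) - \pi_\phi(u_0)]$. Using \eqref{eq:X0_cont} to rewrite $X_0' - (1-R)X_1' = \alpha_0 - \beta_0 u_0$ then collects all $X_1$-pressure terms into $-RX_1'\pi_\phi(u_1)$ and all $X_0$-pressure terms into $(\alpha_0 - \beta_0 u_0)\pi_\phi(u_0)$.

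Finally I would differentiate the three exchange terms defining $\H^{\text{tot}}$ and add them to $\tfrac{d}{dt}\H$. Using $X_1' = \beta_1(u_1 - \tfrac{\alpha_1}{\beta_1})$ from \eqref{eq:X1_cont}, each surviving boundary contribution pairs with its exchange counterpart into a product of the form (deviation of a trace from a reference value) times (deviation of a monotone function evaluated at that trace and at the same reference value). Since $\pi_\phi$ is non-decreasing on $\R_{\geq 0}$ by Remark~\ref{rk:pi_croiss} and $\phi'$ is non-decreasing by convexity, all three products are non-negative, and together with the diffusive term they define
\begin{multline*}
\mathcal{D}_\phi = \intmov \phi''(u)(\d{x}u)^2\,\dd x + R\beta_1\left(u_1 - \tfrac{\alpha_1}{\beta_1}\right)\left(\pi_\phi(u_1) - \pi_\phi\!\left(\tfrac{\alpha_1}{\beta_1}\right)\right) \\
+ \beta_0\left(u_0 - \tfrac{\alpha_0}{\beta_0}\right)\left(\pi_\phi(u_0) - \pi_\phi\!\left(\tfrac{\alpha_0}{\beta_0}\right)\right) + b\left(u_0 - \tfrac{a}{b}\right)\left(\phi'(u_0) - \phi'\!\left(\tfrac{a}{b}\right)\right) \geq 0,
\end{multline*}
which yields the claim $\tfrac{d}{dt}\H^{\text{tot}} = -\mathcal{D}_\phi \leq 0$.

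The main obstacle is not any single estimate but the bookkeeping of the boundary terms: one must anticipate that the exchange terms in $\H^{\text{tot}}$ are tuned precisely to subtract the reference-state contributions $\pi_\phi(\tfrac{\alpha_1}{\beta_1})$, $\pi_\phi(\tfrac{\alpha_0}{\beta_0})$ and $\phi'(\tfrac{a}{b})$, so that each raw boundary flux, which has no definite sign on its own, is converted into a sign-definite monotone product. Recognizing the identity $\d{x}\pi_\phi(u) = \phi''(u)u\,\d{x}u$ that turns the convective contribution into a boundary pressure term is the linchpin making this cancellation close; the convexity of $\phi$ then enters in three places (the bulk integrand, the monotonicity of $\pi_\phi$, and that of $\phi'$) to guarantee $\mathcal{D}_\phi \geq 0$.
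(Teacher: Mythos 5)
Your proposal is correct and follows essentially the same route as the paper's proof: Leibniz rule for the moving domain, integration by parts using \eqref{eq:u}, the identity $\d{x}\pi_\phi(u)=u\phi''(u)\d{x}u$ to convert the convective contribution into boundary pressure terms, insertion of the boundary and interface laws, and monotonicity of $\pi_\phi$ and $\phi'$ to conclude. Your final expression for $\mathcal{D}_\phi$ coincides with the paper's after rewriting $-(\alpha_\ell-\beta_\ell u_\ell)=\beta_\ell\bigl(u_\ell-\tfrac{\alpha_\ell}{\beta_\ell}\bigr)$.
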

	\begin{proof} 
		Let $t > 0$. Compute the derivative of $\H(t)$ and use the equation \eqref{eq:u} to get
		\begin{equation*}
			\frac{d}{dt}\H(t) = - \intmov \phi'(u)\d{x}J + X'_1(t) \phi(u(t,X_1(t))) - X'_0(t) \phi(u(t,X_0(t))).
		\end{equation*}
		Integrating by parts and using the relation between the fluxes $J$ and $\Flux$ at the boundaries \eqref{eq:rel_flux} yields
		\begin{multline*}
			\frac{d}{dt}\H(t) = -\intmov \lvert \d{x}u \rvert^2\phi''(u) - RX'_1(t)\pi_\phi(u(t,X_1(t))) \\
	+ (X'_0(t) - (1-R)X'_1(t))\pi_\phi(u(t,X_0(t))) + \Flux_0 \phi'(u(t,X_0(t))).
		\end{multline*}
		Then, using the equations on the moving boundaries \eqref{eq:X0_cont}-\eqref{eq:X1_cont}, one gets that
		\begin{equation*}
			\frac{d}{dt}\H^{\text{tot}}(t) = - \mathcal{D}_\phi(t),
		\end{equation*}
		where the dissipation term $\mathcal{D}_\phi(t)$ is 
		\begin{multline*} \label{eq:diss_term}
			\mathcal{D}_\phi(t) = \intmov \vert \d{x}u \vert ^2 \phi''(u) - R(\alpha_1 - \beta_1 u(t,X_1(t))) \left(\pi_\phi(u(t,X_1(t))) - \pi_\phi \left( \frac{\alpha_1}{\beta_1} \right) \right) \\
		- (\alpha_0 - \beta_0 u(t,X_0(t))) \left( \pi_\phi(u(t,X_0(t))) - \pi_\phi \left( \frac{\alpha_0}{\beta_0} \right) \right) \\
		- (a - b u(t, X_0(t))) \left( \phi'(u(t,X_0(t))) - \phi' \left( \frac{a}{b} \right) \right).
		\end{multline*} 
		
		Since $\pi_\phi$ and $\phi'$ are non-decreasing over $\R_{\geq 0}$ (see Remark \ref{rk:pi_croiss}), it easily comes that all the terms in $\mathcal{D}_\phi(t)$ are non-negative, and so is $\mathcal{D}_\phi$.
	\end{proof}
	
	This result gives various a priori bounds on any solution to the problem. The following proposition presents the a priori $L^\infty$ bound obtained thanks to the energy estimates.
	\begin{proposition} \label{prop:u_bounds}
		Assume that there exists a solution $(u, X_0, X_1)$ to the system \eqref{eq:sys_mov} such that $u$ is non-negative and $X_1-X_0$ positive. Then, under the assumption \eqref{hyp}, $u$ satisfies
		\(
			m \leq u \leq M, 
		\)
		where $m$ and $M$ are defined by
		\begin{equation} \label{eq:def_Linf_bounds}
			m = \min \left \{\operatorname{ess-inf} u^\text{init}, \frac{\alpha_1}{\beta_1}\right \}, \: M = \max \left \{\operatorname{ess-sup} u^\text{init}, \frac{\alpha_0}{\beta_0} \right \}.
		\end{equation}
	\end{proposition}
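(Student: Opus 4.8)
The plan is to exploit the fact, established in Proposition~\ref{prop:energy_dec}, that \emph{every} convex $\phi\in\mathcal C^2(\R)$ furnishes a non-increasing total free energy $\H^{\text{tot}}$. To extract a one-sided $L^\infty$ bound I would feed into this machinery a convex function that is flat on the admissible side of the threshold and grows on the forbidden side; the bound then falls out of the inequality $\H^{\text{tot}}(t)\le\H^{\text{tot}}(0)$, provided the exchange terms can be made to disappear. The decisive preliminary observation is an ordering of thresholds: under~\eqref{hyp}, together with the remark $\tfrac{\alpha_1}{\beta_1}<\tfrac{\alpha_0}{\beta_0}$, the weighted-mediant inequality yields $\tfrac{\alpha_1}{\beta_1}<\tfrac{\alpha_0+R\alpha_1}{\beta_0+R\beta_1}<\tfrac ab<\tfrac{\alpha_0}{\beta_0}$. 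Consequently all three thresholds $\tfrac ab,\tfrac{\alpha_0}{\beta_0},\tfrac{\alpha_1}{\beta_1}$ entering $\H^{\text{tot}}$ lie in $[m,M]$, with $m\le\tfrac{\alpha_1}{\beta_1}$ and $M\ge\tfrac{\alpha_0}{\beta_0}$; this is exactly what kills the boundary contributions for the right choice of $\phi$.

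For the upper bound I would apply Proposition~\ref{prop:energy_dec} with $\phi=\phi_M$, where $\phi_M(r)=\big((r-M)_+\big)^3$, which is convex, of class $\mathcal C^2$, vanishes together with $\phi_M'$ on $(-\infty,M]$, and is positive on $(M,+\infty)$. Since $\tfrac ab,\tfrac{\alpha_0}{\beta_0},\tfrac{\alpha_1}{\beta_1}\le M$, one gets $\phi_M'(\tfrac ab)=0$ and $\pi_{\phi_M}(\tfrac{\alpha_0}{\beta_0})=\pi_{\phi_M}(\tfrac{\alpha_1}{\beta_1})=0$, so the three exchange terms in $\H^{\text{tot}}$ are identically zero and $\H^{\text{tot}}(t)=\H(t)=\intmov \phi_M(u(t,x))\,\dd x\ge 0$. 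Moreover $\H^{\text{tot}}(0)=\int_0^{L^0}\phi_M(u^\text{init})\,\dd x=0$ because $u^\text{init}\le M$ a.e. Proposition~\ref{prop:energy_dec} then forces $0\le\H(t)=\H^{\text{tot}}(t)\le\H^{\text{tot}}(0)=0$, hence $\phi_M(u(t,\cdot))\equiv 0$ and $u\le M$.

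The lower bound is entirely symmetric: I would take $\phi=\phi_m$ with $\phi_m(r)=\big((m-r)_+\big)^3$, convex, $\mathcal C^2$, and flat on $[m,+\infty)$. As the three thresholds all exceed $m$, the same cancellation of exchange terms occurs, $\H^{\text{tot}}(0)=0$ since $u^\text{init}\ge m$ a.e., and $\H^{\text{tot}}(t)\le 0$ then forces $\phi_m(u(t,\cdot))\equiv 0$, i.e. $u\ge m$. The only genuinely delicate point — and the step I would verify most carefully — is this cancellation of the exchange terms: it rests on the ordering of $\tfrac ab,\tfrac{\alpha_0}{\beta_0},\tfrac{\alpha_1}{\beta_1}$ relative to $m$ and $M$, which uses hypothesis~\eqref{hyp} and not merely the positivity of the parameters. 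I would also confirm that $\phi_M$ and $\phi_m$ meet the $\mathcal C^2$-convexity requirements of Proposition~\ref{prop:energy_dec}, so that the dissipation $\mathcal D_\phi$ remains non-negative for these specific choices.
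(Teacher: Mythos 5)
Your proposal is correct and follows essentially the same route as the paper: invoke Proposition~\ref{prop:energy_dec} with a convex $\mathcal C^2$ function vanishing on the admissible range, note that under~\eqref{hyp} the thresholds $\frac{\alpha_1}{\beta_1}$, $\frac ab$, $\frac{\alpha_0}{\beta_0}$ all lie in $[m,M]$ so the exchange terms cancel and $\H^{\text{tot}}=\H$, then conclude from $0\le\H(t)\le\H(0)=0$. The only (immaterial) difference is that you use two one-sided cubic truncations where the paper uses a single $\phi$ vanishing exactly on $[m,M]$; your version also usefully spells out the mediant inequality that the paper leaves implicit.
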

	\begin{proof}
		Take $\phi \in \mathcal{C}^2(\R)$ convex such that $\phi = 0$ on $[m,M]$, and $\phi > 0$ everywhere else. Then, $\phi'$ and $\pi_\phi$ are identically zero in $[m,M]$, thus Assumption \eqref{hyp} ensures that $\phi(r) = \phi'(r)= \pi_\phi(r)=0$ for $r \in \left \{ \frac{\alpha_1}{\beta_1}, \frac{a}{b}, \frac{\alpha_0}{\beta_0}\right\}$. Hence for all $t \in [0,T]$, there holds $\H^{\text{tot}}(t) =\H(t)$ and
		Proposition \ref{prop:energy_dec} gives that $\frac{d}{dt}\H \leq 0$. Since $u^\text{init} \in [m, M]$, it comes that $\H(0) = 0$. Hence $\H(t) \leq 0$ for $t \geq 0$. Since $\phi \geq 0$ the definition~\eqref{eq:H} of $\H$ ensures that $\H(t) \geq 0$, thus $\H(t) = 0.$
		Finally, $\phi$ being non-negative and continuous, it comes that \(\phi(u) \equiv 0\), ensuring that  $m \leq u \leq M$.
	\end{proof}
\section{Analysis on a fixed mesh}
As a first contribution to the analysis of the numerical scheme, we establish some properties of the scheme on a given mesh, i.e. without letting the discretization parameters tend to $0$. 
	\subsection{Preservation of the travelling wave profile} \label{sec:tw}
	In this part, it is proved that the travelling wave profile is a (quasi-)stationary state for the scheme.
	\begin{proposition}
		The numerical scheme \eqref{eq:scheme} preserves the travelling wave profile in the following sense: for $0 \leq i \leq I+1$ and $0 \leq n \leq N$, denote $u_i^n = \hat{u}(\xi_i)$ with $\hat{u}$ defined in \eqref{eq:tw_u}, set $X_1^n = \hat{L} + \hat{c}n\dt$ and $X_0^n = \hat{c} n \dt$.
Then, the sequence $((u_i^n)_{0 \leq i \leq I+1}, X_0^n, X_1^n)_{0 \leq n \leq N}$ satisfies the scheme \eqref{eq:scheme}.
	\end{proposition}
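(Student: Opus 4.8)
The plan is to substitute the proposed travelling-wave sequence into each of the six relations \eqref{eq:scheme_u}--\eqref{eq:L} and verify that they all reduce to identities, the only genuinely nontrivial point being the exact cancellation of the Scharfetter--Gummel fluxes. First I would record the elementary consequences of the ansatz. Since $X_1^n - X_0^n = \hat L$ for every $n$, equation \eqref{eq:L} holds by construction and $L^n \equiv \hat L$, so that $\delta^n[L] = 0$ while $\delta^n[X_0] = \delta^n[X_1] = \hat c$. Inserting these into \eqref{eq:def_v} gives $\v{i+}{n} = (1-R)\hat c - 0 - \hat c = -R\hat c$, independent of $i$ and $n$, which mirrors the constant value $(1-R)\hat c - \hat c$ of the semi-discrete velocity \eqref{eq:vi+1/2.cont}.

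The core step is to show that $\Fluxnum{i+} = 0$ for every $0 \le i \le I$. Here I would use that the nodal values are samples of the exponential profile \eqref{eq:sol_prog}: evaluating the profile at the center of cell $i$ gives $u_i^n = \hat u(\hat L\,\xi_i) = \frac ab e^{-R\hat c \hat L \xi_i}$ (which is independent of $n$ once the wave translation $\hat c t_n$ is subtracted). Setting $r := L^n \h{i+} \v{i+}{n} = -R\hat c\,\hat L\,\h{i+}$, the exponential structure yields the geometric relation $u_{i+1}^n = e^{r} u_i^n$. Substituting this into the flux \eqref{eq:def_flux_num} produces the numerator $B(-r) u_i^n - B(r) u_{i+1}^n = u_i^n\big(B(-r) - B(r)e^{r}\big)$, which vanishes by the functional identity $B(r)e^{r} = B(-r)$ of Lemma~\ref{lem:Bern}(ii). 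This is precisely the well-balanced property for which the Scharfetter--Gummel discretization was chosen, and I expect it to be the main (indeed essentially the only) obstacle; once it is in place, the remaining verifications are pure bookkeeping.

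With all fluxes equal to zero, I would conclude by treating the remaining equations. In \eqref{eq:scheme_u} the accumulation term $L^n u_i^n - L^{n-1} u_i^{n-1}$ vanishes because both $L^n$ and $u_i^n$ are $n$-independent, and $\Fluxnum{i+} - \Fluxnum{i-} = 0$; the right boundary condition \eqref{eq:right_cond_flux} is immediate since $\Fluxnum{I+}=0$. For \eqref{eq:left_cond_flux} I would use $u_0^n = \hat u(0) = \frac ab$, so that $\Fluxnum{} - a + b u_0^n = 0 - a + a = 0$. The interface relation \eqref{eq:X1}, after inserting $\delta^n[X_1] = \hat c$ and $u_{I+1}^n = \hat u(\hat L) = \frac ab e^{-R\hat c \hat L}$, reduces to $\hat c + \alpha_1 - \beta_1 \frac ab e^{-R\hat c \hat L} = 0$, i.e. to the relation $e^{-R\hat c \hat L} = \frac{b(\alpha_1+\hat c)}{\beta_1 a}$ that was inverted to define $\hat L$ in \eqref{eq:tw_size}. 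Likewise \eqref{eq:X0} becomes $\hat c - (1-R)\hat c - \alpha_0 + \beta_0 \frac ab = R\hat c - \alpha_0 + \beta_0 \frac ab$, which is zero by the definition \eqref{eq:tw_velocity} of $\hat c$. This exhausts the system \eqref{eq:scheme} and establishes the claim.
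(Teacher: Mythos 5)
Your proposal is correct and follows essentially the same route as the paper's (much terser) proof: compute $\v{i+}{n}=-R\hat c$, use the exponential profile together with the identity $B(r)e^r=B(-r)$ of Lemma~\ref{lem:Bern}(ii) to cancel all Scharfetter--Gummel fluxes, and check the boundary updates via the defining relations \eqref{eq:tw_velocity}--\eqref{eq:tw_size}. Your explicit reading of the sampled values as $u_i^n=\hat u(\hat L\,\xi_i)$ is the correct interpretation needed for the geometric relation $u_{i+1}^n=e^{L^n\h{i+}\v{i+}{n}}u_i^n$, and the rest is the same bookkeeping the paper leaves implicit.
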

	\begin{proof}
		Let $0 \leq i \leq I+1$ and $1 \leq n \leq N$. Denote $u_i^n = \hat{u}(\xi_i)$, and set $X_1^n = \hat{L} + \hat{c}n\dt$ and $X_0^n = \hat{c} n \dt$.
		The definitions \eqref{eq:tw_size}, \eqref{eq:tw_velocity} of $\hat{c}$ and $\hat{L}$ ensure that the equations on the boundaries in the scheme \eqref{eq:X1} and \eqref{eq:X0} are satisfied.
		
		\noindent
		Remark that $\v{i+}{n} = -R \hat{c}$. Then, the property \textit{(ii)} of the Bernoulli function given in Lemma \ref{lem:Bern}, combined with the exponential profile ${\hat u}$, implies the cancellation of all the numerical fluxes.  This yields  \eqref{eq:scheme_u}, \eqref{eq:left_cond_flux}, \eqref{eq:right_cond_flux}  and concludes the proof.
	\end{proof}
	\subsection{Discrete energy estimates}
	To begin with, one can prove a result of decreasing energy that can be seen as the discrete counterpart of Proposition \ref{prop:energy_dec}. Thus, it is necessary to define a discrete free energy as follows.
	\begin{definition}
		Let $\phi \in \mathcal{C}^2(\R)$ an arbitrary convex function. Set $n \geq 0$. The discrete free energy associated to $\phi$ is given by
		\begin{equation*}
			\H^n = \sum_{i=1}^I L^n h_i \phi(u_i^n).
		\end{equation*}
		Moreover, the discrete total free energy associated to $\phi$  is
		\begin{multline} \label{eq:def_htot_disc}
			\H^{{\text{tot}},n} = \H^n + \pi_\phi \left( \frac{\alpha_0}{\beta_0} \right) \sum_{k=1}^n \dt (\alpha_0 - \beta_0 u_0^k) \\
			+ \phi' \left(\frac{a}{b} \right) \sum_{k=1}^n \dt (a - b u_0^k) + R\pi_\phi \left( \frac{\alpha_1}{\beta_1} \right) \sum_{k=1}^n \dt (\alpha_1 - \beta_1 u_{I+1}^k).
		\end{multline}
	\end{definition}

	\begin{proposition} \label{prop:disc_energ_dec}
	    Assume that the scheme \eqref{eq:scheme} admits a solution $((u_i^n)_{0 \leq i \leq I+1},$ $X_0^n, X_1^n)_{0 \leq n \leq N} $ such that, for all $0\leq n\leq N$, $L^n>0$ and $u_i^n\geq 0 $ for all $0\leq i\leq I+1$.
		Then, for any convex function $\phi \in \mathcal{C}^2(\R)$, there exists a non-negative dissipation term $\mathcal{D}_{\phi}^n$ such that the discrete derivative of $\H^{{\text{tot}},n}$ satisfies
		\begin{equation} \label{eq:disc_htot_decr}
			\dfrac{\H^{{\text{tot}},n} - \H^{{\text{tot}},n-1}}{\dt} + \mathcal{D}_{\phi}^n \leq 0.
		\end{equation}
	\end{proposition}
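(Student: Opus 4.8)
The plan is to transpose to the discrete level the formal computation performed in the proof of Proposition~\ref{prop:energy_dec}, the \emph{only} source of the inequality~\eqref{eq:disc_htot_decr} (in place of an equality) being the convexity of $\phi$. I would start from $\H^{n}-\H^{n-1}=\sum_{i=1}^{I}h_i\bigl(L^n\phi(u_i^n)-L^{n-1}\phi(u_i^{n-1})\bigr)$ and apply the discrete product rule $L^n\phi(u_i^n)-L^{n-1}\phi(u_i^{n-1})=L^n\bigl(\phi(u_i^n)-\phi(u_i^{n-1})\bigr)+\dt\,\delta^n[L]\,\phi(u_i^{n-1})$. The convexity bound $\phi(u_i^n)-\phi(u_i^{n-1})\le\phi'(u_i^n)(u_i^n-u_i^{n-1})$ turns the first contribution into $\phi'(u_i^n)\bigl(L^nu_i^n-L^{n-1}u_i^{n-1}\bigr)$ up to an explicit term carrying the factor $\delta^n[L]$; collecting the two $\delta^n[L]$-terms and using convexity once more lets me bound them by $\pi_\phi$-quantities, which encode the volume exchange due to the motion of the domain.

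Next I would insert the mass balance~\eqref{eq:scheme_u}, i.e. $L^nu_i^n-L^{n-1}u_i^{n-1}=-\tfrac{\dt}{h_i}\bigl(\Fluxnum{i+}-\Fluxnum{i-}\bigr)$, and perform a summation by parts so that the flux differences act on the increments $\phi'(u_{i+1}^n)-\phi'(u_i^n)$; the contributions of the two boundary edges, expressed through $\Fluxnum{}$ and $\Fluxnum{I+}$, are then removed using the boundary conditions~\eqref{eq:left_cond_flux}--\eqref{eq:right_cond_flux}. Together with the increments $\delta^n[X_0]$ and $\delta^n[X_1]$ furnished by~\eqref{eq:X0}--\eqref{eq:X1}, these boundary quantities are precisely what is needed to reconstruct the three exchange sums appended to $\H^{n}$ in the definition~\eqref{eq:def_htot_disc} of $\H^{{\text{tot}},n}$.

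The heart of the argument is the treatment of the interior terms $-\Fluxnum{i+}\bigl(\phi'(u_{i+1}^n)-\phi'(u_i^n)\bigr)$. Applying identity~\eqref{eq:Bern} of Lemma~\ref{lem:Bern} with $r=L^n\h{i+}\v{i+}{n}$, $p=u_i^n$ and $q=u_{i+1}^n$ decomposes each Scharfetter--Gummel flux~\eqref{eq:def_flux_num} into a diffusive part, carrying the \emph{positive} factor $\bigl(\theta B(r)+(1-\theta)B(-r)\bigr)/(L^n\h{i+})$ times $(u_i^n-u_{i+1}^n)$, plus a transport part equal to $\v{i+}{n}$ times the convex combination $(1-\theta)u_{i+1}^n+\theta u_i^n$. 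Since $\phi'$ is non-decreasing and $B>0$, the diffusive contribution multiplied by $-\bigl(\phi'(u_{i+1}^n)-\phi'(u_i^n)\bigr)$ is non-negative, which is the discrete counterpart of $\intmov|\d{x}u|^2\phi''(u)\ge0$. For the transport part I would choose the free parameter $\theta\in[0,1]$ edgewise according to the sign of $\v{i+}{n}$ (upwinding), so that the elementary convexity estimates $q(\phi'(p)-\phi'(q))\le\pi_\phi(p)-\pi_\phi(q)\le p(\phi'(p)-\phi'(q))$ allow me to replace the weighted $\phi'$-increment by $\pi_\phi(u_{i+1}^n)-\pi_\phi(u_i^n)$, up to a non-negative remainder.

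It then remains to sum the resulting $\v{i+}{n}\bigl(\pi_\phi(u_{i+1}^n)-\pi_\phi(u_i^n)\bigr)$ over the edges. As $\v{i+}{n}$ is affine in $\xi_{i+\frac12}$ by~\eqref{eq:def_v}, a further summation by parts makes the bulk part telescope and cancel the $\delta^n[L]$ volume terms from the first step, leaving only boundary $\pi_\phi$-contributions; combined with the quantities gathered in the second step, these assemble into the dissipation $\mathcal{D}_\phi^n$. Its non-negativity is then checked termwise exactly as in Proposition~\ref{prop:energy_dec}: the bulk is the sum of the non-negative diffusive contributions, while each boundary term takes one of the forms $-(\alpha_\ell-\beta_\ell u)(\pi_\phi(u)-\pi_\phi(\alpha_\ell/\beta_\ell))$ or $-(a-bu_0^n)(\phi'(u_0^n)-\phi'(a/b))$, which are non-negative by the monotonicity of $\pi_\phi$ and $\phi'$ (Remark~\ref{rk:pi_croiss}). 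I expect the main obstacle to lie in this recombination: fixing the edgewise upwind choice of $\theta$ so that the transport terms produce exactly the $\pi_\phi$-increments with a remainder of the right sign, and verifying that, once~\eqref{eq:X0}--\eqref{eq:X1} and~\eqref{eq:left_cond_flux}--\eqref{eq:right_cond_flux} are substituted, the boundary $\pi_\phi$- and $\phi'$-terms produced by the successive summations by parts reconstruct precisely the additional sums defining $\H^{{\text{tot}},n}$.
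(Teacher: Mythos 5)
Your overall architecture coincides with the paper's: bound the time increment of $\H^n$ by convexity, insert the mass balance \eqref{eq:scheme_u}, sum by parts, split the Scharfetter--Gummel fluxes with identity \eqref{eq:Bern}, and perform a second summation by parts on the velocities so that the boundary terms reassemble into $\H^{{\text{tot}},n}$. However, your very first step has a sign problem that the paper avoids by a different, and essential, convexity argument. Your product rule followed by the scalar inequality $\phi(u_i^n)-\phi(u_i^{n-1})\le\phi'(u_i^n)(u_i^n-u_i^{n-1})$ leaves, after collecting the $\delta^n[L]$-terms, the quantity $\dt\,\delta^n[L]\,\bigl(\phi(u_i^{n-1})-\phi'(u_i^n)u_i^{n-1}\bigr)$. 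Writing $\phi(u_i^{n-1})-\phi'(u_i^n)u_i^{n-1}=-\pi_\phi(u_i^n)+D_i$ with $D_i=\phi(u_i^{n-1})-\phi(u_i^n)-\phi'(u_i^n)(u_i^{n-1}-u_i^n)\ge 0$ the Bregman divergence of $\phi$, you recover the needed term $-\dt\,\delta^n[L]\,\pi_\phi(u_i^n)$ --- the one that must cancel \emph{exactly} against the contribution $\delta^n[L]\sum_i h_i\pi_\phi(u_i^n)$ produced by the velocity summation by parts, since $v_{i+\frac12}^n-v_{i-\frac12}^n=-\delta^n[L]h_i$ --- only up to the remainder $\dt\,\delta^n[L]\,D_i$. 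This remainder is non-negative when the layer grows, i.e.\ when $\delta^n[L]>0$, which does occur under \eqref{hyp} (growth of the oxide is the physically relevant regime), and it then sits on the wrong side of the inequality: it cannot be absorbed into $-\mathcal{D}_\phi^n$. The paper's fix is to apply convexity jointly in $(L,Lu)$: the perspective function $A(l,w)=l\,\phi(w/l)$ is jointly convex with $\nabla A(l,w)=(-\pi_\phi(w/l),\phi'(w/l))$, and a single convexity inequality for $A$ yields directly $\frac{\H^n-\H^{n-1}}{\dt}\le Y^n+Z^n$ with exactly $Z^n=-\delta^n[L]\sum_i h_i\pi_\phi(u_i^n)$ and no leftover. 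You need this joint (perspective-function) convexity; the two-step scalar argument does not suffice.

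A second, minor deviation: you treat the parameter $\theta$ in \eqref{eq:Bern} as a free upwinding parameter and accept a signed remainder in the transport part. The paper instead fixes $\theta_{i+\frac12}^n\in[0,1]$ by the mean value theorem so that $\bigl(\theta_{i+\frac12}^n u_i^n+(1-\theta_{i+\frac12}^n)u_{i+1}^n\bigr)\bigl(\phi'(u_{i+1}^n)-\phi'(u_i^n)\bigr)=\pi_\phi(u_{i+1}^n)-\pi_\phi(u_i^n)$ holds exactly; since $B>0$, the diffusive coefficient $\theta B(r)+(1-\theta)B(-r)$ is positive for \emph{any} $\theta\in[0,1]$, so no upwinding is needed and no remainder appears. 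Your variant can be made to work, but the mean-value choice is what makes the subsequent telescoping exact.
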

	\begin{proof}
	Let us introduce the application $A: (l,w) \mapsto l \phi\left(\dfrac{w}{l}\right)$ in order to rewrite 
	$$
	\H^n=\sum_{i=1}^I h_i A\left(L^n, L^n u_i^n \right).
	$$
	As $\phi$ is convex, $A$ is jointly convex and verifies $\nabla A(l,w)=(-\pi_\phi(\dfrac{w}{l}), \phi'(\dfrac{w}{l}))$. 
	From an elementary convexity inequality, we deduce that 
		\begin{equation}\label{disc_energ_dissip_startpt}
			\frac{\H^n - \H^{n-1}}{\dt} \leq Y^n + Z^n
		\end{equation}
		with
		\[
			Y^n = \sum_{i=1}^I h_i \phi'(u_i^n) \frac{L^n u_i^n - L^{n-1} u_i^{n-1}}{\dt}, \qquad 
			Z^n =  -\delta^n[L] \sum_{i=1}^I h_i  \pi_\phi(u_i^n).
		\]
		Using the numerical scheme \eqref{eq:scheme_u} in the expression $Y^n$, performing a discrete integration by parts and using the 
		discrete boundary conditions~\eqref{eq:left_cond_flux} and \eqref{eq:right_cond_flux} provides 
		\begin{equation}\label{def:Yn}
			Y^n  = (a-b u_0^n) \phi'(u_0^n) + \sum_{i=0}^{I} \left( \phi'(u_{i+1}^n) - \phi'(u_{i}^n) \right) \Fluxnum{i+}.
		\end{equation}
		For any $0 \leq i \leq I$ and $n \geq 0$, the mean value formula and the link between $\phi$ and $\pi_\phi$ 
		ensure the existence of $\theta_{i+\frac{1}{2}}^n \in [0,1]$ such that
		\begin{equation} \label{eq:mean_value}
			\pi_\phi(u_{i+1}^n) - \pi_\phi(u_{i}^n) = \left(\theta_{i+\frac{1}{2}}^n u_{i}^n + (1-\theta_{i+\frac{1}{2}}^n)u_{i+1}^n\right) \left(\phi'(u_{i+1}^n) - \phi'(u_{i}^n)\right).
		\end{equation}
	Then, using the third property of the Bernoulli function given in \eqref{eq:Bern} in Lemma \ref{lem:Bern}, it comes that
		\begin{equation} \label{eq:Yn-2}
			\sum_{i=0}^I\left( \phi'(u_{i+1}^n) - \phi'(u_{i}^n) \right) \Fluxnum{i+} =
			- \mathcal{D}_{\phi}^{\text{bulk},n} + \sum_{i=0}^I v_{i + \frac{1}{2}}^n (\pi_\phi(u_{i+1}^n) - \pi_\phi(u_{i}^n)),
		\end{equation}
		where  the quantity $\mathcal{D}_{\phi}^{\text{bulk},n}$ is defined by
		\begin{multline}\label{eq:dbulk.sum}
		\mathcal{D}_{\phi}^{\text{bulk},n} = \sum_{i=0}^{I}
		\left(\Bern{i+}{n}{} \theta_{i+\frac{1}{2}}^n + \Bern{i+}{n}{-}(1 - \theta_{i+\frac{1}{2}}^n) \right)\times  \\
			\frac{\phi'(u_{i+1}^n) - \phi'(u_{i}^n) }{L^n \h{i+}} (u_{i+1}^n - u_{i}^n)  \geq 0.
	\end{multline}
	Thanks to a discrete integration by parts, the last term in \eqref{eq:Yn-2} rewrites 
	$$
	\sum_{i=0}^I v_{i + \frac{1}{2}}^n (\pi_\phi(u_{i+1}^n) - \pi_\phi(u_{i}^n))=-\sum_{i=1}^I (v_{i + \frac{1}{2}}^n-v_{i - \frac{1}{2}}^n)\pi_\phi (u_{i}^n) + v_{I+\frac12}^n \pi_\phi(u_{I+1}^n) -v_{\frac{1}{2}}^n\pi_\phi(u_0^n).
	$$
	The definition \eqref{eq:def_v} of the discrete velocities implies that \((v_{i + \frac{1}{2}}^n-v_{i - \frac{1}{2}}^n)=-\delta^n[L]h_i\) and also fix values of  $v_{I+\frac{1}{2}}^n$ and $v_{\frac{1}{2}}^n$,
	so that 
	\begin{multline}\label{est_termesadd}
	\sum_{i=0}^I v_{i + \frac{1}{2}}^n (\pi_\phi(u_{i+1}^n) - \pi_\phi(u_{i}^n))=-Z^n-R\delta^n[X_1]\pi_\phi (u_{I+1}^n)\\
	-((1-R)\delta^n[X_1]-\delta^n[X_0])\pi_\phi(u_0^n).
	\end{multline}
		Then, using the equations on $\delta^n[X_0]$ and $\delta^n[X_1]$ in \eqref{eq:scheme}, we deduce from \eqref{disc_energ_dissip_startpt}, \eqref{def:Yn}, \eqref{eq:Yn-2} and \eqref{est_termesadd}:
		\begin{multline*}
			\dfrac{\H^n - \H^{n-1}}{\dt} + \mathcal{D}_{\phi}^{\text{bulk},n} \\
			+ (bu_0^n - a)\phi'(u_0^n)+ (\beta_0 u_0^n-\alpha_0 )\pi_\phi(u_0^n) 
			 + R( \beta_1 u_{I+1}^n-\alpha_1)\pi_\phi(u_{I+1}^n) \leq 0.
		\end{multline*}
		
		In order to recover the expression of $\H^{{\text{tot}},n}$ given in \eqref{eq:def_htot_disc}, it only remains to use the non-decreasing behavior of $\phi'$ and $\pi_\phi$ (see Remark \ref{rk:pi_croiss}). Then, the following term is non-negative:
		\begin{multline}\label{eq:dbound}
			\mathcal{D}^{\text{bound}, n}_\phi = (\beta_0 u_0^n-\alpha_0 )\left(\pi_\phi(u_0^n) - \pi_\phi\left(\frac{\alpha_0}{\beta_0}\right)\right) \\
			+ (bu_0^n-a)\left(\phi'(u_0^n) - \phi'\left(\frac{a}{b}\right)\right) \\
			+R(\beta_1 u_{I+1}^n-\alpha_1)\left(\pi_\phi(u_{I+1}^n) - \pi_\phi\left(\frac{\alpha_1}{\beta_1}\right)\right) \geq 0.
		\end{multline}
		Finally, the  inequality \eqref{eq:disc_htot_decr} holds
		with the dissipation term $\mathcal{D}_{\phi}^n$ defined as 
		\begin{equation} \label{eq:disc_diss_term}
			\mathcal{D}_{\phi}^n =  \mathcal{D}_{\phi}^{\text{bulk},n} + \mathcal{D}^{\text{bound}, n}_\phi.
		\end{equation}
	\end{proof}
	This result will be useful in the rest of the paper to get many a priori estimates on any solution to the numerical scheme, thanks to different choices of energy density function $\phi$.

	\subsubsection{Uniform bounds on the concentrations}	
	
	Let us establish the same a priori $L^{\infty}$ bounds as for the continuous problem.
	\begin{proposition} \label{th:bornes_apr}
	    Assume that the scheme \eqref{eq:scheme} admits a solution $((u_i^n)_{0 \leq i \leq I+1},$  $X_0^n, X_1^n)_{0 \leq n \leq N}$ such that, for all $0\leq n\leq N$, $L^n>0$ and $u_i^n\geq 0 $ for all $0\leq i\leq I+1$. Then, the solution to the scheme satisfies 
		\begin{equation} \label{eq:th_bornes_apr}
			m \leq u_i^n \leq M \quad \forall 0\leq i\leq I+1,\ \forall 0\leq n\leq N
		\end{equation}
		where $m$ and $M$ are defined in \eqref{eq:def_Linf_bounds}.
	\end{proposition}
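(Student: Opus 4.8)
The plan is to transpose the proof of Proposition~\ref{prop:u_bounds} to the discrete setting, replacing the continuous dissipation identity of Proposition~\ref{prop:energy_dec} by its discrete analogue, Proposition~\ref{prop:disc_energ_dec}. As in the continuous case, I would take $\phi \in \mathcal{C}^2(\R)$ convex with $\phi \equiv 0$ on $[m,M]$ and $\phi > 0$ outside $[m,M]$; such a $\phi$ automatically satisfies $\phi' \equiv 0$ and $\pi_\phi \equiv 0$ on $[m,M]$. Under assumption~\eqref{hyp} the three ratios $\frac{\alpha_1}{\beta_1}$, $\frac{a}{b}$ and $\frac{\alpha_0}{\beta_0}$ all lie in $[m,M]$ (exactly as recalled in the proof of Proposition~\ref{prop:u_bounds}), so that $\phi$, $\phi'$ and $\pi_\phi$ vanish at each of them. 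Feeding this into the definition~\eqref{eq:def_htot_disc} of the discrete total free energy, every exchange sum carries a vanishing prefactor, namely $\pi_\phi(\frac{\alpha_0}{\beta_0})$, $\phi'(\frac{a}{b})$ or $\pi_\phi(\frac{\alpha_1}{\beta_1})$, whence $\H^{{\text{tot}},n} = \H^n$ for every $0 \leq n \leq N$.

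Next I would initialize the induction. The discrete initial data in~\eqref{eq:u0.disc} are averages of $u^{\text{init}}$ over cells, and since $u^{\text{init}} \in [m,M]$ almost everywhere and $[m,M]$ is an interval, each $u_i^0$ lies in $[m,M]$; hence $\phi(u_i^0) = 0$ and $\H^0 = 0$. Applying the discrete decay~\eqref{eq:disc_htot_decr} together with $\mathcal{D}_\phi^n \geq 0$ and the identity $\H^{{\text{tot}},n} = \H^n$, I get $\H^n \leq \H^{n-1}$, hence by induction $\H^n \leq \H^0 = 0$. On the other hand $\H^n = \sum_{i=1}^I L^n h_i \phi(u_i^n) \geq 0$, since $L^n > 0$, $h_i > 0$ and $\phi \geq 0$. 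Therefore $\H^n = 0$, which forces $\phi(u_i^n) = 0$, i.e. $m \leq u_i^n \leq M$, for all interior indices $1 \leq i \leq I$ and all $n$.

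The point requiring genuinely new work --- and the step I expect to be the main obstacle --- is the control of the two ghost values $u_0^n$ and $u_{I+1}^n$, which do not appear in $\H^n$ and hence are not constrained by the argument above; this is the discrete counterpart of the boundary traces, automatically captured in the continuous proof by continuity of $\phi(u)$ on the closed interval. To handle them I would exploit that $\H^{{\text{tot}},n} = \H^n \equiv 0$ is \emph{constant} in $n$: plugging this into~\eqref{eq:disc_htot_decr} forces $\mathcal{D}_\phi^n = 0$, and since $\mathcal{D}_\phi^n = \mathcal{D}_\phi^{\text{bulk},n} + \mathcal{D}_\phi^{\text{bound},n}$ by~\eqref{eq:disc_diss_term} with both terms non-negative, each must vanish; in particular $\mathcal{D}_\phi^{\text{bound},n} = 0$. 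By Remark~\ref{rk:pi_croiss} the three summands in~\eqref{eq:dbound} are separately non-negative, hence each is zero.

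Finally I would read off the boundary bounds from these equalities by a sign analysis, using that $\phi' > 0$ and $\pi_\phi > 0$ strictly above $M$ while $\phi' < 0$ and $\pi_\phi < 0$ strictly on $[0,m)$, a consequence of $\phi$ being strictly positive, hence non-affine, off $[m,M]$. Since $\phi'(\frac{a}{b}) = 0$, the middle summand of~\eqref{eq:dbound} reduces to $(b u_0^n - a)\phi'(u_0^n) = 0$; if $u_0^n > M$ then $b u_0^n - a > 0$ (as $\frac{a}{b} \leq M$) and $\phi'(u_0^n) > 0$, while if $u_0^n < m$ both factors are negative, so in either case the product is strictly positive --- a contradiction --- forcing $u_0^n \in [m,M]$. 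In the same way, $\pi_\phi(\frac{\alpha_1}{\beta_1}) = 0$ turns the third summand into $R(\beta_1 u_{I+1}^n - \alpha_1)\pi_\phi(u_{I+1}^n) = 0$, and since $\frac{\alpha_1}{\beta_1} \in [m,M]$ the analogous sign inspection forces $u_{I+1}^n \in [m,M]$. This establishes~\eqref{eq:th_bornes_apr} for all indices and all time levels.
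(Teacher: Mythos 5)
Your proposal is correct and follows essentially the same route as the paper: the same choice of $\phi$ vanishing on $[m,M]$, the identity $\H^{{\text{tot}},n}=\H^n$ under~\eqref{hyp}, the decay of $\H^n$ from $\H^0=0$ forcing $\H^n=0$ and hence the interior bounds, and the vanishing of $\mathcal{D}_\phi^n$ (in particular of $\mathcal{D}_\phi^{\text{bound},n}$) to control the ghost values $u_0^n$ and $u_{I+1}^n$. Your explicit sign analysis of the three summands of~\eqref{eq:dbound} is precisely the step the paper leaves implicit, and your telescoping argument is an immaterial repackaging of the paper's induction.
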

	To prove this result, we adapt to the discrete setting the proof of Proposition~\ref{prop:u_bounds}.
	\begin{proof}
		The result is shown by induction on the time step $n \geq 0$. The property \eqref{eq:th_bornes_apr} being clearly verified for $n=0$,
     set $n > 0$ and assume that for any $0 \leq i \leq I+1$, $u_i^{n-1}$ verifies \eqref{eq:th_bornes_apr}. 
     
     As in the proof of Proposition \ref{prop:u_bounds}, we take $\phi \in \mathcal{C}^2(\R)$ convex such that $\phi = 0$ on $[m,M]$, $\phi > 0$ and strictly convex everywhere else and we apply Proposition \ref{prop:disc_energ_dec}.
	 For such a function $\phi$, one has 
	 $\H^{{\text{tot}},n} = \H^n$ and $\H^{{\text{tot}},n-1} = \H^{n-1}$. Moreover, $\H^{n-1} = 0$ thanks to the induction hypothesis. Then, one gets that
    \begin{equation*}
		\H^n + \dt \mathcal{D}^n_{\phi} \leq 0.
    \end{equation*}
Since both $\H^n$ and $\mathcal{D}^n_{\phi}$ are also both non-negative, we deduce that \(\H^n = 0\), which yields $\phi(u_i^n) = 0$ and therefore $m\leq u_i^n\leq M$ for $1\leq i\leq I$, and \(\mathcal{D}^n_{\phi} = 0,\) which yields the same bounds for $u_0^n$ and $u_{I+1}^n$.
	\end{proof}
	
One then readily deduces the following statement from~\eqref{eq:X0}--\eqref{eq:L} and \eqref{eq:def_v}.
	\begin{corollary}\label{coro:bounds.vitesse} Under the assumptions of Proposition~\ref{th:bornes_apr}, there holds
	\begin{align}\label{eq:bound.deltaX1}
	 - \alpha_1 + \beta_1 m  \leq &\; \delta^n[X_1] \leq - \alpha_1 + \beta_1 M, \\
	 - \alpha_0 - R\alpha_1 + m(\beta_0 + R \beta_1)  \leq &\; \delta^n[L] \leq   - \alpha_0 - R\alpha_1 + M(\beta_0 + R \beta_1). 
	 \label{eq:bound.deltaL}
	\end{align}
	As a consequence, there exists some $v_\flat$ and $v^\sharp$ depending only on the data of the continuous problem such that 
	\begin{equation}\label{eq:bound.v}
	v_\flat \leq v_{i+\frac12}^n \leq v^\sharp, \qquad \forall \, i \in \{0,\dots, I\}.
	\end{equation}
	\end{corollary}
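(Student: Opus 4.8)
The plan is to read off each of the three estimates directly from the discrete boundary equations in~\eqref{eq:scheme}, combined with the uniform bounds $m \leq u_i^n \leq M$ furnished by Proposition~\ref{th:bornes_apr}, using nothing more than the positivity of the kinetic parameters.

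First I would treat $\delta^n[X_1]$. Rearranging~\eqref{eq:X1} yields the identity $\delta^n[X_1] = -\alpha_1 + \beta_1 u_{I+1}^n$. Since $\beta_1 > 0$ and $m \leq u_{I+1}^n \leq M$, inserting the two bounds on $u_{I+1}^n$ gives~\eqref{eq:bound.deltaX1} at once.

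Next I would handle $\delta^n[L]$. From~\eqref{eq:L} one has $\delta^n[L] = \delta^n[X_1] - \delta^n[X_0]$, while~\eqref{eq:X0} gives $\delta^n[X_0] = \alpha_0 - \beta_0 u_0^n + (1-R)\delta^n[X_1]$. Eliminating $\delta^n[X_0]$ between these two relations and then substituting the formula for $\delta^n[X_1]$ obtained above produces
\[
\delta^n[L] = R\,\delta^n[X_1] - \alpha_0 + \beta_0 u_0^n = -\alpha_0 - R\alpha_1 + \beta_0 u_0^n + R\beta_1 u_{I+1}^n.
\]
As $\beta_0, R, \beta_1 > 0$, feeding the bounds $m \leq u_0^n, u_{I+1}^n \leq M$ into this expression gives~\eqref{eq:bound.deltaL}.

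Finally, for the velocities I would note that $\delta^n[X_0] = \delta^n[X_1] - \delta^n[L]$ is also bounded by constants depending only on the data, thanks to the two estimates just established, and the same holds for $\delta^n[X_1]$ and $\delta^n[L]$. Since $\xi_{i+\frac{1}{2}} \in [0,1]$ for every $i$, the defining formula~\eqref{eq:def_v} exhibits $\v{i+}{n}$ as a linear combination of the three data-bounded quantities $\delta^n[X_1]$, $\delta^n[L]$ and $\delta^n[X_0]$, with coefficients $(1-R)$, $-\xi_{i+\frac{1}{2}} \in [-1,0]$ and $-1$ respectively. Taking $v_\flat$ and $v^\sharp$ to be the extreme values of this combination over all admissible values of the three quantities and of $\xi_{i+\frac{1}{2}}\in[0,1]$ then yields~\eqref{eq:bound.v}, uniformly in $i$ and $n$. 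There is essentially no obstacle here: the statement is a purely algebraic consequence of the scheme and of Proposition~\ref{th:bornes_apr}, and the only points requiring a little care are tracking the signs of the coefficients when converting bounds on $u$ into bounds on the discrete time-derivatives, and using $\xi_{i+\frac{1}{2}} \in [0,1]$ to ensure the velocity bound is uniform in the cell index.
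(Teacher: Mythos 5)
Your proposal is correct and follows exactly the route the paper intends: the corollary is stated as a direct consequence of \eqref{eq:X0}--\eqref{eq:L}, \eqref{eq:def_v} and the $L^\infty$ bounds of Proposition~\ref{th:bornes_apr}, and your algebra (including the identity $\delta^n[L] = R\,\delta^n[X_1] - \alpha_0 + \beta_0 u_0^n$ and the use of $\xi_{i+\frac12}\in[0,1]$ for uniformity in $i$) is precisely the deduction the authors leave to the reader.
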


	\subsubsection{Bounds on the domain size}
	In this section, two different kinds of estimates are established on the domain size. The first one presented below in Proposition \ref{prop:Ln_bounds_non_unif}, is a non-uniform estimate with respect to the discretization parameters, but it is global in time. It will be useful to prove the existence of a solution to the scheme \eqref{eq:scheme}.
	\begin{proposition} \label{prop:Ln_bounds_non_unif}
		Suppose that the hypothesis \eqref{hyp} is satisfied, and that there exists a solution $((u_i^n)_{0 \leq i \leq I+1}, X_0^n, X_1^n)$ to the numerical scheme \eqref{eq:scheme} with $L^n > 0$  and $u_i^n\geq 0 $ for all $0\leq i\leq I+1$. Then $L^n > \min \left( \frac{m}{M} L^{n-1}, \hat{L} \right)$.
	\end{proposition}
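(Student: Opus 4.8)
The plan is to convert the one-step inequality into a purely algebraic comparison driven by the discrete mass balance, and to isolate a single scalar sub-claim on the left trace. First I would record the closed form of the discrete interface velocity: combining \eqref{eq:X0}, \eqref{eq:X1} and \eqref{eq:L} with the definition \eqref{eq:def_v} gives $L^n = L^{n-1} + \dt\,\delta^n[L]$ with $\delta^n[L] = \beta_0 u_0^n + R\beta_1 u_{I+1}^n - (\alpha_0 + R\alpha_1)$. Since $L^n > \min(\frac mM L^{n-1}, \hat L)$ is equivalent to ``$L^n > \hat L$ or $L^n > \frac mM L^{n-1}$'', the complementary case $L^n > \hat L$ is immediate (then $L^n > \hat L \ge \min(\frac mM L^{n-1},\hat L)$), so it suffices to treat the regime $L^n \le \hat L$ and prove $L^n > \frac mM L^{n-1}$ there.

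The engine for this geometric bound is conservation of mass. Summing \eqref{eq:scheme_u} over $1 \le i \le I$, telescoping the fluxes, and inserting the boundary conditions \eqref{eq:left_cond_flux} and \eqref{eq:right_cond_flux}, I obtain the discrete mass balance $\mathcal{M}^n - \mathcal{M}^{n-1} = \dt(a - bu_0^n)$, where $\mathcal{M}^n = L^n \sum_{i=1}^I h_i u_i^n$. Writing $\bar u^n = \sum_{i=1}^I h_i u_i^n$, the weights $h_i$ sum to one, so $m \le \bar u^n \le M$ by Proposition~\ref{th:bornes_apr}, and hence $\frac{L^n}{L^{n-1}} = \frac{\mathcal{M}^n}{\mathcal{M}^{n-1}}\,\frac{\bar u^{n-1}}{\bar u^n} \ge \frac mM\,\frac{\mathcal{M}^n}{\mathcal{M}^{n-1}}$. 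Consequently, as soon as $\mathcal{M}^n \ge \mathcal{M}^{n-1}$, i.e. as soon as $u_0^n \le \frac ab$, one gets $L^n \ge \frac mM L^{n-1}$, the strict inequality following from an elementary argument that the three extremal equalities $\bar u^{n-1}=m$, $\bar u^n=M$ and $a=bu_0^n$ cannot hold simultaneously.

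Everything thus reduces to the sub-claim that $L^n \le \hat L$ forces $u_0^n \le \frac ab$, and I expect this to be the \emph{main obstacle}. The left trace is not pinned by the $L^\infty$ bounds alone, since $M \ge \frac{\alpha_0}{\beta_0} > \frac ab$, so the smallness of the domain must genuinely be exploited. My plan is a discrete comparison with the travelling wave: the scheme exactly preserves the profile $\hat u$ (Section~\ref{sec:tw}), whose left trace is precisely $\frac ab$, while the no-flux condition \eqref{eq:right_cond_flux} pins the right trace through $u_{I+1}^n = e^{P_{I+}^n} u_I^n$ with $P_{I+}^n = -R L^n \h{I+}\,\delta^n[X_1]$ (via property \textit{(ii)} of Lemma~\ref{lem:Bern} and $\v{I+}{n}=-R\,\delta^n[X_1]$). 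Exploiting the monotone, M-matrix structure of the Scharfetter--Gummel fluxes \eqref{eq:def_flux_num}, one should be able to establish a monotone dependence on $L^n$ showing that a domain no larger than $\hat L$ cannot support a left trace exceeding $\frac ab$; equivalently, $u_0^n > \frac ab$ would force the right trace, and hence $\delta^n[L]$, to be large enough that $L^n$ overshoots $\hat L$. Making this monotonicity in $L^n$ fully rigorous, in the presence of the time-accumulation terms that couple all cells in \eqref{eq:scheme_u}, is the delicate point.
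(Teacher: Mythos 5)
Your mass-balance computation is correct and is a genuinely nice observation: summing \eqref{eq:scheme_u} over $i$ and using \eqref{eq:left_cond_flux}--\eqref{eq:right_cond_flux} does give $\mathcal M^n-\mathcal M^{n-1}=\dt\,(a-b u_0^n)$ for $\mathcal M^n=L^n\sum_i h_i u_i^n$, and with the $L^\infty$ bounds of Proposition~\ref{th:bornes_apr} this yields $L^n\geq \frac mM L^{n-1}$ whenever $u_0^n\leq \frac ab$. The gap lies in the reduction that precedes it. By splitting ``$L^n>\hat L$ or $L^n>\frac mM L^{n-1}$'' and retaining only the hypothesis $L^n\leq \hat L$, you discard the other half of the negated conclusion, and the sub-claim you are then left to prove --- that $L^n\leq \hat L$ \emph{alone} forces $u_0^n\leq \frac ab$ --- is not merely delicate, it is false. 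The left trace is governed by the Robin balance $\Fluxnum{}=a-bu_0^n$ and by the size of the interior concentration, not by the width of the domain: take $u^\text{init}\equiv C$ with $C$ much larger than $\frac{\alpha_0}{\beta_0}$ and $L^0<\hat L$; for $\dt$ small the first iterate satisfies $u_0^1\approx C>\frac ab$ while $L^1\leq \hat L$ still holds (the proposition itself is not contradicted there, since $L^1>L^0>\frac mM L^0$). No monotonicity-in-$L^n$ or comparison-with-$\hat u$ argument can therefore close this step, and you acknowledge you have not carried it out.

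The paper's proof avoids this by arguing by contradiction with \emph{both} inequalities $L^n\leq \frac mM L^{n-1}$ and $L^n\leq \hat L$ in force simultaneously. The first gives $L^n u_i^n\leq mL^{n-1}\leq L^{n-1}u_i^{n-1}$, hence via \eqref{eq:scheme_u} the fluxes are nondecreasing in $i$ and, by \eqref{eq:right_cond_flux}, all nonpositive; this yields $u_0^n\geq \frac ab$ --- note, the \emph{opposite} sign to the one your route requires. The nonpositivity of the Scharfetter--Gummel fluxes then forces the discrete profile to grow at least like the travelling-wave exponential, and $L^n\leq \hat L$ gives $u_{I+1}^n\geq \hat u(1)$, hence $\delta^n[X_1]\geq \hat c$; combined with $\delta^n[L]<0$ this returns $u_0^n<\frac ab$, the desired contradiction. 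If you wish to keep your mass-balance identity, it would have to be deployed inside such a two-hypothesis contradiction rather than after your case split; as written, the proof does not go through.
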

	\begin{proof}
		Let $n \geq 1$. Suppose for contradiction that $L^n \leq \min \left( \frac{m}{M} L^{n-1}, \hat{L} \right)$. This implies $\delta^n[L]< 0$ since Assumption~\eqref{hyp} 
		ensures that $m < M$. Furthermore,  owing to the $L^{\infty}$-bounds \eqref{eq:th_bornes_apr}, one has
		\begin{equation*}
			h_i \frac{L^n u_i^n - L^{n-1} u_i^{n-1}}{\dt} \leq 0, \quad \forall 1 \leq i \leq I.
		\end{equation*}
		Using the scheme \eqref{eq:scheme}, it comes that 
		\begin{equation} \label{eq:flux_neg}
			\Fluxnum{i-}\leq \Fluxnum{i+} \leq \Fluxnum{I+} = 0,\quad \forall 1\leq i\leq I.
		\end{equation}
		In particular, $\Fluxnum{} = a - b u_0^n \leq 0$, that is $u_0^n \geq \dfrac{a}{b}$. This implies
		\begin{equation}\label{eq:vi+1/2.estime}
			\v{i+}{n} = - \alpha_0 + \beta_0 u_0^n - \delta^n[L] \xi_{i+\frac{1}{2}} \geq  -\alpha_0 + \beta_0 \frac{a}b = - R \hat c.
		\end{equation}
		Moreover, one infers from \eqref{eq:flux_neg}, from the definition~\eqref{eq:def_flux_num} of the numerical fluxes, and from property (ii) of Lemma~\ref{lem:Bern} that 
		\(
		u_{i+1}^n \geq u_i^n \exp( L^n h_{i+\frac12} \v{i+}{n}), 
		\)
		which yields
		\begin{equation*}
		u_{I+1}^n\geq \exp{\left(\sum_{i=0}^{I}L^n \h{i+} \v{i+}{n}\right)} u_0^n \geq u_0^n e^{-L^n R \hat c} \geq \frac{a}{b} e^{- \hat{L} R \hat c} = \hat{u}(1). 
		\end{equation*}

		The relation \eqref{eq:X1} then provides that \(\delta^n[X_1] \geq \hat c\). But, noticing that \eqref{eq:X0} combined with \eqref{eq:L} rewrites 
		\begin{equation*}\label{eq:deltaL}
		 \delta^n[L]=R\delta^n[X_1]-(\alpha_0-\beta_0 u_0^n),   
		\end{equation*} 
		we then deduce that \(\alpha_0-\beta_0 u_0^n > R \hat c \), which contradicts $u_0^n \geq \dfrac{a}b$ obtained thanks to \eqref{eq:flux_neg}.
		Therefore, the starting hypothesis in this proof can not be satisfied, which concludes the proof of Proposition~\ref{prop:Ln_bounds_non_unif}. 
	\end{proof}
	Now, the second bound presented in this section is uniform with respect to the discretization, but only up to some finite time horizon $T$. As it is a direct consequence of \eqref{eq:bound.deltaL}, we do not detail the proof here.
	\begin{proposition} \label{prop:Ln_bd}
		Suppose that the hypothesis \eqref{hyp} is satisfied, and that there exists a solution $((u_i^n)_{0 \leq i \leq I+1}, X_0^n, X_1^n)$ to the numerical scheme \eqref{eq:scheme}  with  $u_i^n\geq 0 $ for all $0\leq i\leq I+1$ for $T < \dfrac{L^0}{(\alpha_0 + R \alpha_1) - m(\beta_0 + R \beta_1) }$. Then, there exist two positive constants $\underline{L}$ and $\overline{L}$, depending only on $T$, $L^0$, $m$, $M$ and the kinetic parameters, such that for all $n \in \{1,\dots, N\}$, it holds
		\begin{equation} \label{eq:unif_bds_Ln}
			0 < \underline{L} \leq L^n \leq \overline{L}.
		\end{equation}
	\end{proposition}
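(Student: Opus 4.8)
The plan is to integrate the one-step bound~\eqref{eq:bound.deltaL} on the discrete derivative $\delta^n[L]=(L^n-L^{n-1})/\dt$ furnished by Corollary~\ref{coro:bounds.vitesse}. Since
\begin{equation*}
L^n = L^0 + \dt \sum_{k=1}^{n} \delta^k[L]
\end{equation*}
holds for every $1\leq n\leq N$, and since each increment $\delta^k[L]$ is controlled uniformly in $k$ and in the discretization parameters by~\eqref{eq:bound.deltaL}, it suffices to sum these inequalities and to use that the only time dependence enters through $n\dt \leq N\dt = T$. The hypothesis~\eqref{hyp}, via Proposition~\ref{th:bornes_apr}, is what guarantees $m<M$ and hence the validity of the bounds feeding into~\eqref{eq:bound.deltaL}.

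For the lower bound I would set $\kappa = \alpha_0 + R\alpha_1 - m(\beta_0 + R\beta_1)$, so that the left inequality in~\eqref{eq:bound.deltaL} reads $\delta^k[L]\geq -\kappa$. Telescoping then gives $L^n \geq L^0 - n\dt\,\kappa \geq L^0 - T\kappa$ for all $n$, and the assumed restriction $T < L^0/\kappa$ is precisely what makes $\underline{L} := L^0 - T\kappa$ strictly positive. This is exactly the role of the time restriction: it forbids the oxide layer from collapsing within the horizon $T$. Symmetrically, writing $\kappa' = M(\beta_0 + R\beta_1) - \alpha_0 - R\alpha_1$, the right inequality gives $\delta^k[L]\leq \kappa'$, and summing with $n\dt\leq T$ yields $L^n \leq L^0 + T\max(\kappa',0) =: \overline{L}$, the maximum with $0$ covering the case of a shrinking layer. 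Both constants manifestly depend only on $T$, $L^0$, $m$, $M$ and the kinetic parameters.

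There is essentially no obstacle here, which is why the statement can legitimately be left without a detailed proof: everything reduces to a telescoping of the per-step velocity bounds already secured in Corollary~\ref{coro:bounds.vitesse}, themselves resting on the uniform $L^\infty$ estimates of Proposition~\ref{th:bornes_apr}. The single point that deserves a line of verification is that the positivity of $\underline{L}$ is encoded exactly by the stated upper bound on $T$, and this is immediate once the telescoping identity is written down.
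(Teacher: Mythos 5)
Your proof is correct and follows exactly the route the paper intends: the paper omits the details precisely because the result is, as it says, ``a direct consequence of \eqref{eq:bound.deltaL}'', i.e.\ of telescoping the per-step bounds from Corollary~\ref{coro:bounds.vitesse} and using $n\dt\leq T$ together with the time restriction to keep $\underline{L}>0$. The only cosmetic remark is that, for the lower bound, one should take $\underline{L}=L^0-T\max(\kappa,0)$ (as you already did for $\overline{L}$) so that the bound also covers the degenerate case $\kappa\leq 0$; this does not affect the argument.
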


	\subsection{Existence of a solution to the numerical scheme} \label{sec:exist_scheme}
	The main result of this section can finally be proved.
		\begin{theorem} \label{th:exist_scheme}
			Under the assumption \eqref{hyp}, for a given mesh and a given time step, the numerical scheme \eqref{eq:scheme} admits a solution $((u_i^n)_{0 \leq i \leq I+1}, X_0^n, X_1^n)$ for any $0 \leq n \leq N$.
		\end{theorem}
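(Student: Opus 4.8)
The plan is to fix the mesh and the time step and to argue by induction on $n$, the case $n=0$ being furnished by the initial discretization~\eqref{eq:u0.disc}. Assuming $(u_i^{n-1})_{0\le i\le I+1}$, $X_0^{n-1}$, $X_1^{n-1}$ are known with $L^{n-1}>0$ and $m\le u_i^{n-1}\le M$, I would regard the equations \eqref{eq:scheme_u}--\eqref{eq:X1} at level $n$, after eliminating $L^n$ through \eqref{eq:L}, as a single nonlinear equation $F(\bz)=0$ for the unknown $\bz=((u_i^n)_{0\le i\le I+1},X_0^n,X_1^n)\in\R^{I+4}$. Since the Bernoulli function is continuous (Lemma~\ref{lem:Bern}), $F$ is continuous wherever $L^n=X_1^n-X_0^n>0$, and I would prove the existence of a zero of $F$ by a Brouwer topological degree argument.

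To this end I introduce a homotopy $H(\cdot,\lambda):\R^{I+4}\to\R^{I+4}$, $\lambda\in[0,1]$, that leaves the conservation laws \eqref{eq:scheme_u}, the flux boundary conditions \eqref{eq:left_cond_flux}--\eqref{eq:right_cond_flux} and the velocity definition \eqref{eq:def_v} untouched, but scales the interface equations into $\delta^n[X_0]-(1-R)\delta^n[X_1]=\lambda(\alpha_0-\beta_0u_0^n)$ and $\delta^n[X_1]=\lambda(\beta_1u_{I+1}^n-\alpha_1)$. Then $H(\cdot,1)=F$, whereas $H(\cdot,0)$ freezes the domain, forcing $X_0^n=X_0^{n-1}$, $X_1^n=X_1^{n-1}$, hence $\v{i+}{n}=0$ and a purely diffusive linear system for the $u_i^n$. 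I would then fix the bounded open set $\mathcal O=\{\bz:\ m-1<u_i<M+1,\ L_*/2<X_1-X_0<\overline L+1\}$, with $L_*=\min\!\big(\frac{m}{M}L^{n-1},\hat L\big)$ and $\overline L$ the upper bound on $L^n$ implied by the velocity estimates of Corollary~\ref{coro:bounds.vitesse}; note that $L^n>0$ throughout $\overline{\mathcal O}$, so $H$ is continuous there.

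The heart of the argument is a set of a priori estimates valid uniformly in $\lambda$, guaranteeing that no zero of $H(\cdot,\lambda)$ can reach $\partial\mathcal O$. First, for any zero with $L^n>0$, the $u$-subsystem \eqref{eq:scheme_u}, \eqref{eq:left_cond_flux}, \eqref{eq:right_cond_flux} is linear in $(u_i^n)$; eliminating $u_{I+1}^n=e^{r}u_I^n$ with $r=L^n\h{I+}\v{I+}{n}$ through the no-flux condition and property (ii) of Lemma~\ref{lem:Bern}, the reduced matrix has nonpositive off-diagonal entries and is strictly column diagonally dominant—each interior column summing to $h_iL^n/\dt>0$ and the left boundary column to $b>0$ by \eqref{eq:def_v}—hence is an M-matrix with nonnegative inverse, so that $u_i^n\ge 0$ for all $i$ (the right-hand side being nonnegative since $u^{n-1}\ge0$ and $a>0$). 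With positivity secured unconditionally in the velocity magnitude, I would rerun the computations of Propositions~\ref{prop:disc_energ_dec} and~\ref{th:bornes_apr}: the only change is that the interface contributions now carry a factor $\lambda\ge0$, which enters the dissipation with a favorable sign, so the discrete total free energy still decays and $m\le u_i^n\le M$. Finally I would replay the contradiction argument of Proposition~\ref{prop:Ln_bounds_non_unif}; scaling the interface equations by $\lambda$ only weakens the relevant inequalities in the harmless direction, and one again reaches $\delta^n[L]\ge0$ against the assumption $L^n\le L_*$, yielding $L^n>L_*$. Together with $L^n\le\overline L$, every zero lies strictly inside $\mathcal O$, so $0\notin H(\partial\mathcal O,\lambda)$ for all $\lambda$.

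By homotopy invariance of the degree, $\deg(F,\mathcal O,0)=\deg(H(\cdot,0),\mathcal O,0)$. At $\lambda=0$ the interface equations pin $X_0^n=X_0^{n-1}$, $X_1^n=X_1^{n-1}$ independently of $u$, so $H(\cdot,0)=0$ has a unique zero, namely the frozen domain together with the unique solution of the diffusive M-matrix system; its Jacobian is block-triangular, with the invertible interface block (of determinant $1/\dt^2$) and the invertible diffusion matrix on the diagonal, whence $\deg(H(\cdot,0),\mathcal O,0)=\pm1\neq0$. Therefore $F$ has a zero in $\mathcal O$, which is the sought solution of \eqref{eq:scheme} with $L^n>0$ and $m\le u_i^n\le M$, and the induction closes. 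I expect the main obstacle to be precisely the verification that the estimates of Propositions~\ref{prop:disc_energ_dec}, \ref{th:bornes_apr} and~\ref{prop:Ln_bounds_non_unif} survive the homotopy uniformly in $\lambda$—that is, that inserting the factor $\lambda$ in the interface equations spoils neither the signs in the free-energy dissipation nor the contradiction argument on the domain size—together with the (velocity-robust) positivity of the zeros of $H(\cdot,\lambda)$.
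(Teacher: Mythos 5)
Your proposal is correct and follows the same overall architecture as the paper's proof: a Brouwer topological degree argument along a homotopy joining an explicitly solvable system at $\lambda=0$ to the scheme at $\lambda=1$, the invariance of the degree being secured by $\lambda$-uniform a priori bounds obtained by rerunning Propositions~\ref{prop:disc_energ_dec}, \ref{th:bornes_apr} and \ref{prop:Ln_bounds_non_unif} along the homotopy. The difference lies in the choice of homotopy. The paper keeps the interface equations at full strength but multiplies the fluxes (and the dilation part of the accumulation term) by $\lambda$ and blends the flux boundary conditions with the conditions $\beta_0 u_0^{(\lambda)}=\alpha_0$, $\beta_1 u_{I+1}^{(\lambda)}=\alpha_1$, so that at $\lambda=0$ the traces are pinned at $\alpha_0/\beta_0$ and $\alpha_1/\beta_1$ and the interface equations then freeze the domain; you instead keep the full convection--diffusion block untouched and scale only the interface equations, which freezes the domain at $\lambda=0$ and leaves a linear implicit diffusion step there. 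Both routes work: the $\lambda$-factors you introduce enter the boundary dissipation and the contradiction argument of Proposition~\ref{prop:Ln_bounds_non_unif} with the favourable sign (using $\lambda\le 1$ and $\hat c>0$ under \eqref{hyp}), so your uniform bounds do hold, with the minor caveat that at $\lambda=0$ the boundary dissipation at $\xi=1$ vanishes and the bound on $u_{I+1}^{(\lambda)}$ must instead be read off the bulk dissipation for $i=I$ (or simply from $u_{I+1}=u_I$ when the velocity vanishes). Your explicit M-matrix, column-dominance argument for the nonnegativity of $u^{(\lambda)}$ is a welcome addition: the paper's Propositions~\ref{prop:disc_energ_dec} and \ref{th:bornes_apr} assume $u_i^n\ge 0$ as a hypothesis and the published proof does not say where this comes from along the homotopy, whereas your argument supplies it at every zero of $H(\cdot,\lambda)$. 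One small repair is needed: your open set $\mathcal O$ constrains only the difference $X_1-X_0$, so it is unbounded in the direction $X_0+X_1$ and the Brouwer degree is not defined on it; you should add the individual bounds $|X_\ell^{(\lambda)}-X_\ell^{n-1}|\le C\,\dt$ furnished by Corollary~\ref{coro:bounds.vitesse} (the paper includes an explicit two-sided bound on $X_1^{(\lambda)}$ for precisely this reason).
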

				\begin{proof}
The proof relies on a Brouwer topological degree argument (see for instance Chapter 1, Section 3.1 in \cite{Deimling}). Our goal is then to construct a homotopy to pass from an invertible linear system for $\lambda = 0$ to the system~\eqref{eq:scheme} for $\lambda = 1$ in such a way that the solution $((u_i^{(\lambda)})_{0 \leq i \leq I+1}, X_1^{(\lambda)}, L^{(\lambda)})$ for $\lambda \in [0,1]$ remains in some open set $\Omega$ containing the solution for $\lambda = 0$ and such that the scheme is uniformly continuous on $\overline \Omega$. Note that we use $((u_i^{(\lambda)})_{0 \leq i \leq I+1}, X_1^{(\lambda)}, L^{(\lambda)})$ as main unknowns there, and that $X_0^{(\lambda)}$ is deduced a posteriori. 

For $\lambda \in [0,1]$, we seek $((u_i^{(\lambda)})_{0 \leq i \leq I+1}, X_1^{(\lambda)}, L^{(\lambda)})$ solution to 
\begin{align*}
\frac{\lambda (L^{(\lambda)} - L^{n-1}) u_i^{(\lambda)} + L^{n-1}(u_i^{(\lambda)} - u_i^{n-1})}{\dt} h_i + \lambda\left( \Ff_{i+\frac12}^{(\lambda)} -  \Ff_{i-\frac12}^{(\lambda)} \right) &\;= 0, \quad 1 \leq i \leq I,  \\
\lambda \left( \Ff_{\frac12}^{(\lambda)} - a + b u_0^{(\lambda)} \right) + (1-\lambda) \left( \beta_0  u_0^{(\lambda)} - \alpha_0 \right) &\; = 0, \\
\lambda \Ff_{I+\frac12}^{(\lambda)} + (1-\lambda) \left( \beta_1 u_{I+1}^{(\lambda)} - \alpha_1 \right) & \; = 0, \\
R \frac{X_1^{(\lambda)} - X_1^{n-1}}{\dt} - \frac{L^{(\lambda)} - L^{n-1}}{\dt} - \alpha_0 + \beta_0 u_0^{(\lambda)} &\;= 0, \\
\frac{X_1^{(\lambda)} - X_1^{n-1}}{\dt} + \alpha_1 - \beta_1 u_{I+1}^{(\lambda)} &\;= 0, 
\end{align*}
with 
\[
 \Ff_{i+\frac12}^{(\lambda)}  = \frac{1}{L^{(\lambda)}\h{i+}} \left( B\left(-L^{(\lambda)} h_{i+\frac12} v_{i+\frac12}^{(\lambda)} \right)u_i^{(\lambda)} -B\left(L^{(\lambda)} h_{i+\frac12} v_{i+\frac12}^{(\lambda)} \right)u_{i+1}^{(\lambda)} \right)
\]
and 
\[
 v_{i+\frac12}^{(\lambda)} = - R  \frac{X_1^{(\lambda)} - X_1^{n-1}}{\dt} \xi_{i+\frac12} + (1 - \xi_{i+\frac12}) ( \beta_0 u_0^{(\lambda)} - \alpha_0).
\]

For $\lambda = 0$, the unique solution to the above system is $u_i^{{(0)}} = u_i^{n-1}$ for $1 \leq i \leq I$, $u_0^{(0)} = \frac{\alpha_0}{\beta_0}$, $u_{I+1}^{(0)} = \frac{\alpha_1}{\beta_1}$, $X_1^{(0)} = X_1^{n-1}$ and $L^{(0)}= L^{n-1}$. Moreover, one can extend the proof of Proposition~\ref{prop:disc_energ_dec} and then Proposition~\ref{th:bornes_apr}, Corollary~\ref{coro:bounds.vitesse}, and Proposition~\ref{prop:Ln_bounds_non_unif} to the case $\lambda \leq 1$.
In particular, one checks that 
\begin{align*}
m \leq\; & u_i^{(\lambda)} \leq M, \\ 0 < \min\left( \hat L, \frac{m}M L^{n-1}\right) \leq\; & L^{(\lambda)} \leq L^{n-1} + \dt ( M(\beta_0 + R \beta_1) - \alpha_0 - R\alpha_1), \\
 X_1^{n-1} + \dt \left( \beta_1 m - \alpha_1 \right) \leq\; & X_1^{(\lambda)} \leq X_1^{n-1} + \dt \left( \beta_1 M - \alpha_1 \right).
\end{align*}
These uniform bounds allow to use Brouwer's topological degree to ensure that the system admits (at least) a solution for all $\lambda \in [0,1]$. 
Note here that the positive lower bound on $L^{(\lambda)}$ is crucial to ensure the uniform continuity of the flux $\Ff_{i+\frac12}^{(\lambda)}$. 
		\end{proof}

\section{Convergence of the scheme} \label{sec:conv}

The goal of this part is to establish rigorously, thanks to compactness and consistency arguments, that our scheme captures 
the solution to the continuous problem~\eqref{eq:sys_mov}.

		\subsection{Weak and approximate solutions} \label{sec:weak}

As our goal is to establish the convergence of the discrete solution, the existence of which being guaranteed thanks to Theorem~\ref{th:exist_scheme}, one needs to first give a precise sense to the notion of solution to the continuous model~\eqref{eq:sys_mov}. This is the purpose of the following definition. 

		\begin{definition} \label{def:weak}
			A triplet $(u,X_0,X_1)$ is said to be a \emph{weak solution} of \eqref{eq:sys_mov} up to time $T > 0$ if it satisfies:
	\begin{enumerate}[label=(\textit{\roman*})]
		\item $X_0$ and $X_1$ belong to $\mathcal{C}^{0,1} \left( [0,T), \R \right)$, with $X_0(0) = 0$, $X_1(0)= L^0$ and $L(t)=X_1(t)-X_0(t)>0$ for all $t\in [0,T)$ ;
		\item $u \in L^{\infty}(Q_T)$, with $u \geq 0$ a.e. in $Q_T$, and $\d{x}u \in L^2(Q_T)$;
		\item For any $\psi \in \mathcal{C}^\infty_c (Q_T)$, there holds
			\begin{multline}\label{eq:weak}
				\int_{X_0(0)}^{X_1(0)} u^\text{init}\psi(0,\cdot) + \int_0^T \int_{X_0(t)}^{X_1(t)} (u \d{t} \psi + (- \d{x}u + (1-R)uX_1') \d{x} \psi)\\ + \int_0^T \left(a - bu(\cdot, X_0(\cdot))\right) \psi(\cdot,X_0(\cdot)) = 0;
			\end{multline}
		\item For a.e. $t \in [0,T)$, $X_0$ and $X_1$ satisfy
		\[
		X_1'(t) = - \alpha_1 + \beta_1 u_1(t) \qquad \text{and} \quad 
		X_0'(t) = \alpha_0 - \beta_0 u_0(t) + (1-R) X_1'(t),
		\]
			where, for $\ell \in \{0,1\}$, $u_\ell(t)$ denotes the trace of $u(t, \cdot)$ at $X_\ell(t)$.
	\end{enumerate}				
		\end{definition}

\begin{remark}\label{rk:def-weak-sol}
Applying the change of variables 
$$
(t,\xi)\in [0,T)\times [0,1]\mapsto (t,x=X_0(t)+\xi L(t))\in Q_T,
$$
we obtain an equivalent definition of the weak solution. Indeed, $(u,X_0,X_1)$ is a weak solution of \eqref{eq:sys_mov} in the sense of Definition~\ref{def:weak} if and only if  $X_0$ and $X_1$ satisfy {\em (i)} and there exists  a positive function $w\in L^{\infty} ([0,T)\times [0,1])\cap L^2([0,T]\times H^1(0,1))$ such that
 \begin{itemize}
 \item For any $\varphi\in \mathcal{C}^\infty_c ([0,T)\times [0,1])$, there holds
 \end{itemize}
  \begin{multline}\label{eq:weak-w}
				\int_0^T \int_{0}^{1} \left(Lw \d{t} \varphi + \left(- \frac{1}{L}\d{\xi}w+ ((1-R)X_1'-L' \xi-X_0')w\right) \d{\xi} \varphi\right)(t,\xi) \dd t\dd \xi\\
				+\int_{0}^{1} L(0)u^\text{init}(0,L(0)\xi)\varphi(0,\xi)\dd \xi + \int_0^T \left(a - bw(t, 0)\right) \varphi(\cdot,0)\dd t = 0;
			\end{multline}
\begin{itemize}
\item  $X_1'(t)=-\alpha_1+\beta_1 w(t,1)$ and $X_0'(t)=\alpha_0-\beta_0 w(t,0) +(1-R) X_1'(t)$;
\item and $u(t,x)= w(t,\dfrac{x-X_0(t)}{L(t})$.
\end{itemize}

\end{remark}

In order to address the convergence of the scheme, let us now explain how we define some approximates solutions based on the discrete data 
$\left((u_i^n)_{0 \leq i \leq I+1}, X_0^n, X_1^n\right)_{0 \leq n \leq N}$. 
For the boundaries, we set 
\begin{equation}\label{eq:Xhdt}
X_{\ell, h,\dt}(t) = X_\ell^n \quad \text{if}\; t \in (t^{n-1}, t^n], \quad \text{and}\quad X_{\ell, h,\dt}(0) = X_\ell^0, \qquad \ell \in \{0,1\},
\end{equation}
so that we can also define $L_{h,\dt} = X_{1,h,\dt}- X_{0,h,\dt}$. Then, it is possible to reconstruct the approximate densities on  $\bigcup_{t \in [0,T)} \Big(\{t\} \times [X_{0,h,\dt}(t), X_{1,h,\dt}(t)] \Big)$ by setting 
$$
u_{h,\Delta t} (t,x) = 
\left\{\begin{aligned}
& u_i^n\mbox{ if } (t,x)\in (t^{n-1},t^n]\times (x_{i-\frac{1}{2}}^n, x_{i+\frac{1}{2}}^n) \\
& u_i^0 \mbox{ if } t=0 \mbox{ and }x\in(x_{i-\frac{1}{2}}^0, x_{i+\frac{1}{2}}^0) \\
& u_\ell^n \mbox{ if } x=X_\ell^n \mbox{ and } t\in (t^{n-1},t^n], \mbox { for } \ell\in \{0, 1\}.
\end{aligned}\right. 
$$
This reconstruction is useful to show simultaneously the evolution of the oxide layer and the evolution of the density profile inside the layer, as on Figure~\ref{fig:profiles}. However, in order to compare different approximate solutions --when studying the long time behavior of the scheme, or when studying its convergence--, it seems relevant to reconstruct the approximate densities on the underlying fixed domain $[0,T]\times [0,1]$. This approximate solution will be denoted by $w_{h,\Delta t}$ and defined by 
 \begin{equation}\label{eq:whdt}
 w_{h,\Delta t}(t,\xi) = 
 \left\{\begin{aligned}
& u_i^n\mbox{ if } (t,\xi)\in (t^{n-1},t^n]\times (\xi_{i-\frac{1}{2}}, \xi_{i+\frac{1}{2}}) \\
& u_i^0 \mbox{ if } t=0 \mbox{ and }\xi \in(\xi_{i-\frac{1}{2}}, \xi_{i+\frac{1}{2}}) \\
& u_\ell^n \mbox{ if } \xi=\ell \mbox{ and } t\in (t^{n-1},t^n], \mbox { for } \ell\in \{0, 1\}.
\end{aligned}\right. 
\end{equation}
We will establish the convergence of the scheme by studying the convergence of $(w_{h,\Delta t},X_{0,h,\Delta t}, X_{1,h,\Delta t})$ when $h$ and $\Delta t$ tend to 0. As a by-product of the convergence of the scheme, we will obtain the existence of a weak solution to \eqref{eq:sys_mov} in the sense of Definition~\ref{def:weak} thanks to Remark~\ref{rk:def-weak-sol}.

		In the spirit of \cite{kangourou_2018}, we introduce the reconstruction operator $\Pi_{\mathcal{M}}$ corresponding to a 
		mesh $\mathcal M = {(\xi_{i+\frac12})}_{0 \leq i \leq I}$, mapping 
		$\R^{I+2}$ to bounded functions defined on $[0,1]$.
		 We avoid here the explicit introduction of the reconstruction operator in the notation, 
		but rather denote by $z_h = \Pi_{\mathcal{M}}\left(\boldsymbol z\right)$ the image of $\boldsymbol z =(z_i)_{0\leq i\leq I+1} \in \R^{I+2}$, defined by 
\[
\Pi_\Mm \bz (\xi) = z_i \textnormal{ for } \xi \in (\xi_{i-\frac{1}{2}}, \xi_{i+\frac{1}{2}}),
\quad 
\Pi_\Mm\bz (0) = z_0, \quad \text{and}\quad \Pi_\Mm\bz(1) = z_{I+1}. 
\]
We define the finite dimensional vector space
$S_{\mathcal M} = \operatorname{Range}(\Pi_{\mathcal M})$, 
equipped with discrete $H^1(0,1)$ and $(H^1)'(0,1)$ norms. More precisely, for all $z_h = \Pi_\Mm \bz \in S_\Mm$, we set
\begin{align*}
\normeh{z_h} =\;& \left(  \sum_{i=1}^{I+1} \frac{(z_i - z_{i-1})^2}{\h{i-}} + z_0^2 \right)^{\frac{1}{2}},\\
\normehs{z_h} =\;& \sup \left\{ \int_0^1 z_h \eta_h \textnormal{ / } \eta_h \in S_\mathcal{M}, \: \normeh{\eta_h} \leq 1 \right\}, 
\end{align*}
		We recall the discrete Poincaré inequality satisfied in this framework (\cite{CB08}):
		\begin{equation} \label{eq:poincare}
			\Vert z_h \Vert_{L^2(0,1)} \leq \sqrt{2} \normeh{z_h}, \qquad z_h \in S_\Mm. 
		\end{equation}

For time iterated discrete vectors $\bz = \left(\bz^n\right)_{0 \leq n \leq N} \in \left(\R^{I+2}\right)^{N+1}$, we define the linear reconstruction 
operator $\Pi_{\Mm,\dt}$ mapping $\left(\R^{I+2}\right)^{N+1}$ into the bounded functions on $[0,T] \times [0,1]$ by setting 
\[
 \Pi_{\Mm,\dt}\bz(t,\cdot) = \Pi_\Mm \bz^n \quad \text{if}\; t \in (t^{n-1}, t^n], \quad t \in (0,T], 
 \quad  \text{and} \quad \Pi_\Mm \bz^0(\xi) = z_h^0.
\]
The space $S_{\mathcal M,\dt} = \operatorname{Range}(\Pi_{\mathcal M,\dt})$ is equipped with the 
discrete $L^2(H^1)$ and $L^2(H^{-1})$ norms
\[
\normelh{ \Pi_{\Mm,\dt}\bz} = \left( \sum_{n=1}^N \dt \normeh{ \Pi_{\Mm}\bz^n}^2 \right)^{\frac{1}{2}},  \quad 
\normelhs{\Pi_{\Mm,\dt}\bz} = \left( \sum_{n=1}^N \dt \normehs{\Pi_{\Mm}\bz^n}^2 \right)^{\frac{1}{2}}.
\]

Denoting by $\bu = \left(\bu^n\right)_{0 \leq n \leq N} \in \left(\R^{I+2}\right)^{N+1}$ with $\bu^n = \left(u_i^n\right)_{0\leq i \leq I+1}$, we have that $w_{h,\dt}$ defined by \eqref{eq:whdt} verifies \(w_{h,\dt} = \Pi_{\Mm,\dt} (\bu)\) with \(w_h^n = \Pi_{\Mm}(\bu^n)\).

We also define the approximate time and space derivatives of $w_{h,\dt}$, respectively $\partial_{t,\dt} w_{h,\dt}$ and $\partial_{\xi,h} w_{h,\dt}$, by 
\begin{equation}\label{eq:dtwhdt}
\partial_{t,\dt} w_{h,\dt} = \begin{cases}
\frac{w_{h,\dt}(t) - w_{h,\dt}(t-\dt)}{\dt} & \text{for}\; t \in (\dt, T], \\
\frac{w_{h}^1 - w_h^0}{\dt} & \text{for}\; t \in [0,\dt]. 
\end{cases}
\end{equation}
and 
\begin{equation}\label{eq:dxiwhdt}
\partial_{\xi,h} w_{h,\dt}(t,\xi) = \frac{w_{i+1}^n - w_i^n}{h_{i+\frac12}} \quad \text{if}\; (t,\xi) \in (t^{n-1},t^n] \times (\xi_i, \xi_{i+1}).
\end{equation}
Note that $\partial_{t,\dt} w_{h,\dt}$ belongs to $S_{\Mm,\dt}$, but that $\partial_{\xi,h} w_{h,\dt}$ does not.

Finally, we define the approximate flux $\Ff_{h,\dt} \in L^\infty((0,T) \times (0,1))$ by 
\begin{equation}\label{eq:Fhdt}
\Ff_{h,\dt}(t,\xi) = \Ff_{i+\frac12}^n \quad \text{if}\; (t,\xi) \in (t^{n-1},t^n] \times (\xi_i, \xi_{i+1}).
\end{equation}

\subsection{Additional estimates} 
In this section, we derive some additional energy estimates on the approximate solution on a time interval $(0,T)$ with 
the time horizon $T$ being such that there exists some uniform bounds $L^\sharp \geq L_\flat >0$ for the domain size, i.e. $L^\sharp \geq L^n \geq L_\flat$ for all $n \in \{0,\dots, N\}$. 
The existence of such a positive $T$ is granted by Proposition~\ref{prop:Ln_bd}. 
	\begin{proposition} \label{prop:estim_u_l2}
	Assume that Assumption~\eqref{hyp} holds. 
	Let $T>0$ be a finite time horizon such that $L^\sharp > L^n \geq L_\flat>0$ for all $n  \in \{0,\dots, N\}$, then 
	there exists $\ctel{cte:L2H1}>0$ depending only on $L^\sharp, m, M$, on the parameters $a$, $b$, $\alpha_0$, $\beta_0$, $\alpha_1$, $\beta_1$, $R$ and $T$   (but neither on the mesh $\mathcal M$ nor on $\dt$) such that 
		$\normelh{w_{h,\dt}}^2 \leq \cter{cte:L2H1}.$
	\end{proposition}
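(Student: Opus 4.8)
The plan is to test the discrete energy--dissipation inequality of Proposition~\ref{prop:disc_energ_dec} against the quadratic density $\phi(r) = \frac{1}{2} r^2$, for which $\phi'(r) = r$ and $\pi_\phi(r) = \frac{1}{2}r^2$, and to extract from the bulk dissipation a control of the discrete $H^1$ seminorm. First I would expand the norm as
\[
\normelh{w_{h,\dt}}^2 = \sum_{n=1}^N \dt \sum_{i=0}^{I} \frac{(u_{i+1}^n - u_i^n)^2}{\h{i+}} + \sum_{n=1}^N \dt\, (u_0^n)^2 ,
\]
after reindexing the seminorm. The boundary contribution is harmless: by the uniform $L^\infty$ bound of Proposition~\ref{th:bornes_apr} one has $0 \le u_0^n \le M$, so this sum is at most $T M^2$. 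It thus remains to control the first (seminorm) sum.

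For $\phi(r) = \frac12 r^2$ the differences $\phi'(u_{i+1}^n) - \phi'(u_i^n)$ equal $u_{i+1}^n - u_i^n$, so the bulk dissipation \eqref{eq:dbulk.sum} becomes
\[
\mathcal{D}_\phi^{\text{bulk},n} = \sum_{i=0}^I \left( \Bern{i+}{n}{}\theta_{i+\frac{1}{2}}^n + \Bern{i+}{n}{-}(1-\theta_{i+\frac{1}{2}}^n) \right) \frac{(u_{i+1}^n - u_i^n)^2}{L^n \h{i+}} .
\]
The key step is to bound the Bernoulli weights from below, uniformly in the mesh and in $\dt$. Since $B$ is positive and decreasing (Lemma~\ref{lem:Bern}), any convex combination satisfies $\theta B(r) + (1-\theta)B(-r) \ge B(|r|)$, and $B$ being decreasing gives $B(|r|) \ge B(R_0)$ as soon as $|r| \le R_0$. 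Here the argument is $r = L^n \h{i+}\v{i+}{n}$, and using $\h{i+} \le 1$, the upper bound $L^n \le L^\sharp$, together with the velocity bounds~\eqref{eq:bound.v} of Corollary~\ref{coro:bounds.vitesse}, one may take $R_0 = L^\sharp \max(|v_\flat|, |v^\sharp|)$. Combined with $1/L^n \ge 1/L^\sharp$, this yields
\[
\mathcal{D}_\phi^{\text{bulk},n} \ge \frac{B(R_0)}{L^\sharp} \sum_{i=0}^I \frac{(u_{i+1}^n - u_i^n)^2}{\h{i+}} .
\]

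To conclude, I would sum in time. As $\mathcal{D}_\phi^n = \mathcal{D}_\phi^{\text{bulk},n} + \mathcal{D}_\phi^{\text{bound},n} \ge \mathcal{D}_\phi^{\text{bulk},n}$, inequality~\eqref{eq:disc_htot_decr} gives $\dt\, \mathcal{D}_\phi^{\text{bulk},n} \le \H^{{\text{tot}},n-1} - \H^{{\text{tot}},n}$, and summing over $n$ telescopes into $\sum_{n=1}^N \dt\, \mathcal{D}_\phi^{\text{bulk},n} \le \H^{{\text{tot}},0} - \H^{{\text{tot}},N}$. The initial term is $\H^{{\text{tot}},0} = \H^0 = \sum_{i=1}^I L^0 h_i \phi(u_i^0) \le \frac12 L^0 M^2$, using $u_i^0 \le M$ and $\sum_i h_i = 1$. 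For the final term, $\H^N \ge 0$ while the three boundary sums defining $\H^{{\text{tot}},N}$ in \eqref{eq:def_htot_disc} consist of $N$ increments of the form $\dt(\alpha_0 - \beta_0 u_0^k)$, $\dt(a - b u_0^k)$ and $\dt(\alpha_1 - \beta_1 u_{I+1}^k)$, each bounded through the $L^\infty$ bounds; hence $-\H^{{\text{tot}},N} \le C T$ for a constant depending only on the parameters, $m$ and $M$. Putting the two contributions together and multiplying by $L^\sharp / B(R_0)$ controls the seminorm sum, and adding $TM^2$ gives the announced estimate. The main obstacle is precisely the uniform lower bound $B(R_0) > 0$ on the Bernoulli weights: this is where the velocity bounds of Corollary~\ref{coro:bounds.vitesse} and the uniform upper bound $L^\sharp$ on the domain size (rather than the lower bound $L_\flat$, which is not needed for the constant, in accordance with the statement) prevent the numerical diffusion from vanishing and the seminorm control from degenerating as the mesh is refined.
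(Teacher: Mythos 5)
Your proof is correct and follows essentially the same route as the paper's: testing Proposition~\ref{prop:disc_energ_dec} with $\phi(r)=\tfrac12 r^2$, bounding the Bernoulli weights below by $B(R_0)>0$ via the velocity bounds of Corollary~\ref{coro:bounds.vitesse} and the upper bound $L^\sharp$, telescoping in time, and controlling $\H^{\text{tot},0}$ from above and $\H^{\text{tot},N}$ from below using the $L^\infty$ bounds. The observation that only $L^\sharp$ (not $L_\flat$) enters the constant also matches the paper's statement.
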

	\begin{proof}
		Proposition \ref{prop:disc_energ_dec} gives
		\begin{equation}\label{eq:estim_u_12.1}
			\frac{\H^{\text{tot}, n} - \H^{\text{tot}, n-1}}{\dt} + \mathcal{D}_\phi^{\text{bulk},n} \leq - \mathcal{D}_\phi^{\text{bound},n} \leq 0, \qquad 1 \leq n \leq N,
		\end{equation}
		with \(\mathcal{D}_\phi^{\text{bulk},n}\) and \(\mathcal{D}_\phi^{\text{bound},n}\) being defined by~\eqref{eq:dbulk.sum} and \eqref{eq:dbound} 
		respectively. For the specific choice $\phi(r) = \frac12r^2$, we have 
		\[
		\mathcal{D}_\phi^{\text{bulk},n} \!=\! \sum_{i=0}^I \frac{(u_{i+1}^n - u_i^n)^2}{L^n \h{i+}}\!
		\left(\Bern{i+}{n}{-} \theta_{i+\frac{1}{2}}^n + \Bern{i+}{n}{}(1 - \theta_{i+\frac{1}{2}}^n) \right).
		\]
		Since $B$ is decreasing, one has $B(r) \geq B(|r|)$. Then since $\theta_{i+\frac12}^n \in [0,1]$ and since $L^n \leq L^\sharp$, 
		we obtain that 
		\[
		\mathcal{D}_\phi^{\text{bulk},n} \geq \frac1{L^\sharp}\sum_{i=0}^I \frac{(u_{i+1}^n - u_i^n)^2}{\h{i+}} B\left(L^n \h{i+} |v_{i+\frac12}^n|\right). 
		\]
		Thanks to the bounds on $v_{i+\frac12}^n$ stated in Corollary~\ref{coro:bounds.vitesse} 
		combined with the upper bound $L^\sharp$ on $L^n$ and with $\h{i+}\leq 1$, we get some uniform upper bound on $L^n \h{i+} |v_{i+\frac12}^n|$, 
		whence a uniform lower bound $\delta >0$ for $B\left(L^n \h{i+} |v_{i+\frac12}^n|\right)$. 
		As a consequence, estimate \eqref{eq:estim_u_12.1} yields
		\[
		 \frac\delta{L^\sharp} \|w_h^n\|^2_1 \leq \mathcal{D}_\phi^{\text{bulk},n} +  \frac\delta{L^\sharp} (u_0^n)^2\leq - \frac{\H^{\text{tot}, n} - \H^{\text{tot}, n-1}}{\dt}
		 + \frac\delta{L^\sharp} M^2.
		\]
		Then, multiplying by $\dt$ and summing over $n \in \{1,\dots, N\}$ gives
		\begin{equation} \label{eq:lhs_bd_norm}
		\|w_{h,\dt}\|_{2,1}^2  \leq  L^\sharp \frac{\mathcal{H}_\phi^{\text{tot},0} - \mathcal{H}_\phi^{\text{tot},N}}\delta +    M^2 T.
		\end{equation}
		We infer from the definition \eqref{eq:def_htot_disc} of the discrete total free energy that 
		\begin{equation}\label{eq:estim.Htot0}
		{\mathcal H}_\phi^{\text{tot},0} = {\mathcal H}_\phi^{0} \leq  {\mathcal H}_\phi(0) \leq \frac{L^\sharp M^2}2.
		\end{equation}
		On the other hand, since  $\mathcal{H}_\phi^{N} \geq 0$, one gets from~\eqref{eq:def_htot_disc} combined with the uniform upper 
		bound~\eqref{eq:th_bornes_apr} on $u_i^n$ that 
		\begin{equation}\label{eq:estim.HtotN}
		\mathcal{H}_\phi^{\text{tot},N} \geq - T \left( 
		\frac12 \frac{(\alpha_0)^2}{\beta_0}  \left(M-\frac{\alpha_0}{\beta_0} \right) 
			+ a \left(M-\frac{a}b \right) + \frac{R}2 \frac{(\alpha_1)^2}{\beta_1} \left(M-\frac{\alpha_1}{ \beta_1 }\right)\right).
		\end{equation}
		Taking~\eqref{eq:estim.Htot0} and \eqref{eq:estim.HtotN} into account in~\eqref{eq:lhs_bd_norm} concludes the proof.
	\end{proof}

Our next statement is about the control in the discrete $L^2((H^1)')$ norm of the discrete time derivative $\partial_{t,\dt} w_{h,\dt}$ introduced in~\eqref{eq:dtwhdt}.

	\begin{proposition} \label{prop:estim_du_hs}
		Under the assumptions of Proposition~\ref{prop:estim_u_l2}, there exist $\ctel{cte:FL2}>0$ and $\ctel{cte:L2H-1}>0$ depending only on $L_\flat$, $L^\sharp$, $m$, $M$, on the parameters $a$, $b$, $\alpha_0$, $\beta_0$, $\alpha_1$, $\beta_1$, $R$ and $T$, but not on the discretization, such that
		\begin{align}
	 		\| \Ff_{h,\dt} \|_{L^2((0,T) \times (0,1))} \leq& \; \cter{cte:FL2} \label{eq:FL2}\\
			\normelhs{\ddisc w_{h,\dt}} \leq &\;  \cter{cte:L2H-1} \label{eq:L2H-1}.
		\end{align}	
	\end{proposition}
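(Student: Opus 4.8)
The plan is to prove the flux bound~\eqref{eq:FL2} first and then to feed it into a duality argument for the dual-norm estimate~\eqref{eq:L2H-1}. The common engine for both is the additive splitting of the Scharfetter--Gummel flux into a diffusive and a convective contribution, which I would read off identity~\textit{(iii)} of Lemma~\ref{lem:Bern}. Setting $r_{i+\frac12}^n = L^n \h{i+} \v{i+}{n}$ and applying~\eqref{eq:Bern} with $p = u_i^n$, $q = u_{i+1}^n$ and an arbitrary fixed $\theta \in [0,1]$, the definition~\eqref{eq:def_flux_num} rewrites as
\[
\Fluxnum{i+} = \frac{\theta \Bern{i+}{n}{} + (1-\theta)\Bern{i+}{n}{-}}{L^n \h{i+}}\,(u_i^n - u_{i+1}^n) + \v{i+}{n}\bigl((1-\theta) u_{i+1}^n + \theta u_i^n\bigr).
\]
Since the velocities satisfy $v_\flat \leq \v{i+}{n} \leq v^\sharp$ by Corollary~\ref{coro:bounds.vitesse}, while $L^n \leq L^\sharp$ and $\h{i+} \leq 1$, the argument $r_{i+\frac12}^n$ stays in a fixed bounded interval, so $\Bern{i+}{n}{}$ and $\Bern{i+}{n}{-}$ are bounded by a constant depending only on the data. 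Combining this with $m \leq u_i^n \leq M$ and the lower bound $L^n \geq L_\flat$, I would obtain a pointwise estimate of the form $|\Fluxnum{i+}| \leq C\bigl(\frac{|u_{i+1}^n - u_i^n|}{\h{i+}} + 1\bigr)$.

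Squaring this bound, multiplying by $\h{i+}$, summing over $i$ (using $\sum_{i=0}^I \h{i+} = 1$) and then over $n$ weighted by $\dt$, the diffusive contribution reproduces exactly the discrete $L^2(H^1)$ seminorm, so that
\[
\|\Ff_{h,\dt}\|_{L^2((0,T) \times (0,1))}^2 \leq C\bigl(\normelh{w_{h,\dt}}^2 + T\bigr).
\]
Estimate~\eqref{eq:FL2} then follows immediately from the bound $\normelh{w_{h,\dt}}^2 \leq \cter{cte:L2H1}$ of Proposition~\ref{prop:estim_u_l2}.

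For~\eqref{eq:L2H-1} I would argue by duality at each time level. Denoting by $(\ddisc w)^n := \Pi_\Mm\bigl(\frac{\bu^n - \bu^{n-1}}{\dt}\bigr)$ the level-$n$ reconstruction of the discrete time derivative, I fix an arbitrary $\eta_h = \Pi_\Mm \boldsymbol\eta \in S_\Mm$ with $\normeh{\eta_h} \leq 1$; only the interior cells contribute to the pairing, giving $\int_0^1 (\ddisc w)^n \eta_h = \sum_{i=1}^I h_i \frac{u_i^n - u_i^{n-1}}{\dt}\eta_i$. To inject the scheme I would split the conservative difference as $L^n u_i^n - L^{n-1} u_i^{n-1} = L^n(u_i^n - u_i^{n-1}) + \dt\,\delta^n[L]\,u_i^{n-1}$, so that~\eqref{eq:scheme_u} turns the time increment into flux differences plus a remainder:
\[
h_i \frac{u_i^n - u_i^{n-1}}{\dt} = \frac{1}{L^n}\Bigl(\Fluxnum{i-} - \Fluxnum{i+} - h_i\,\delta^n[L]\,u_i^{n-1}\Bigr).
\]
A discrete summation by parts, combined with the boundary conditions~\eqref{eq:left_cond_flux}--\eqref{eq:right_cond_flux} which identify $\Fluxnum{} = a - b u_0^n$ and $\Fluxnum{I+} = 0$, then recasts the pairing as
\[
\int_0^1 (\ddisc w)^n \eta_h = \frac{1}{L^n}\Bigl(\eta_0 (a - b u_0^n) + \sum_{i=0}^I (\eta_{i+1} - \eta_i)\,\Fluxnum{i+} - \delta^n[L] \sum_{i=1}^I h_i \eta_i u_i^{n-1}\Bigr).
\]
I would then estimate each term using $\normeh{\eta_h} \leq 1$: the first via $|\eta_0| \leq \normeh{\eta_h}$ and $0 \leq u_0^n \leq M$; the flux sum by Cauchy--Schwarz against $\h{i+}^{1/2}$, which produces $\normeh{\eta_h}$ times the spatial $L^2$ norm of $\Ff_{h,\dt}$ at level $n$; and the last term via the velocity bound on $\delta^n[L]$ (Corollary~\ref{coro:bounds.vitesse}), $u_i^{n-1} \leq M$, and the discrete Poincaré inequality~\eqref{eq:poincare}. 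Dividing by $L^n \geq L_\flat$, squaring, summing $\dt\sum_n$ and invoking~\eqref{eq:FL2} for the flux contribution then yields~\eqref{eq:L2H-1}.

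The hard part will be the arbitrary-Lagrangian--Eulerian mismatch between the tested quantity $\ddisc w_{h,\dt} = \frac{u_i^n - u_i^{n-1}}{\dt}$ and the $L$-weighted conservative difference that the scheme~\eqref{eq:scheme_u} actually controls: the extra term $\delta^n[L]\,u_i^{n-1}$ created by the splitting must be absorbed, and this is precisely where the uniform lower bound $L^n \geq L_\flat$ and the control of $\delta^n[L]$ are indispensable. The summation-by-parts bookkeeping and the identification of the boundary fluxes are routine but need care so that no uncontrolled boundary contribution survives.
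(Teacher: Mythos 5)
Your proposal is correct and follows essentially the same route as the paper: the same convection--diffusion splitting of the Scharfetter--Gummel flux via Lemma~\ref{lem:Bern}~\textit{(iii)}, the same ALE splitting $L^n u_i^n - L^{n-1}u_i^{n-1} = L^n(u_i^n-u_i^{n-1}) + \dt\,\delta^n[L]\,u_i^{n-1}$, and the same summation by parts, Cauchy--Schwarz, Poincar\'e and $L^2(H^1)$ ingredients. The only cosmetic difference is that you establish \eqref{eq:FL2} as a standalone step before the duality argument, whereas the paper derives the flux bound inside it; the content is identical.
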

	\begin{proof}
		Let $\eta_h \in S_\mathcal{M}$ with $\eta_h (\xi) = \eta_i$ for $\xi \in (\xi_{i-\frac{1}{2}}, \xi_{i + \frac{1}{2}})$ be such that $\normeh{\eta_h} \leq 1$, 
		then for $t \in (t^{n-1}, t^n]$, $1\leq n \leq N$, one has 
		\begin{equation*}
			E^n= \int_0^1 \ddisc w_{h,\dt}(t,\xi) \eta_h(\xi) \dd \xi = \sum_{i=1}^I h_i \frac{u^n_i - u^{n-1}_i}{\dt} \eta_i.
		\end{equation*}
		Rewriting 
		\[
		\frac{u^n_i - u^{n-1}_i}{\dt} = \frac1{L^{n}}\left( \frac{L^n u_i^n - L^{n-1} u_i^{n-1}}{\dt} - u_i^{n-1} \frac{L^n - L^{n-1}}{\dt} \right), 
		\]
		and using~\eqref{eq:scheme_u}, one gets that 
		\begin{equation}\label{eq:En=Fn+Gn}
		E^n= - \frac1{L^n}  \sum_{i=1}^I \eta_i \left(\Fluxnum{i+} - \Fluxnum{i-}\right) - \frac1{L^n}\frac{L^n - L^{n-1}}{\dt}   \sum_{i=1}^I \eta_i u_i^{n-1} h_i.
		\end{equation}
		As a direct consequence of~\eqref{eq:bound.deltaL}, 
		there exists some $C_4>0$ depending only on the data of the continuous problem such that  $\left|\delta^n[L]\right| \leq \ctel{cte:L'}$.
		The Poincaré inequality \eqref{eq:poincare}, the lower bound on $L^n$ and the upper bound on $u_i^{n-1}$ then yield
		\[
		\left|\frac1{L^n}\frac{L^n - L^{n-1}}{\dt}   \sum_{i=1}^I \eta_i u_i^{n-1} h_i \right| \leq \frac{\cter{cte:L'}}{L_\flat} {\|w_h^n\|}_{L^2(0,1)} {\| \eta_h \|}_{L^2(0,1)} \leq   \frac{\cter{cte:L'}}{L_\flat} \sqrt2 M. 
		\]
		Besides, applying a summation by parts to the first term in the right-hand-side of~\eqref{eq:En=Fn+Gn} provides 
		\[
		- \frac1{L^n}  \sum_{i=1}^I \eta_i \left(\Fluxnum{i+} - \Fluxnum{i-}\right) = \frac1{L^n} \sum_{i=0}^I \Fluxnum{i+} \left( \eta_{i+1} - \eta_i \right) + \frac1{L^n}  \Fluxnum{} \eta_0 .
		\]
		Since $| \Fluxnum{} | \leq \min(a - b m, bM-a)$, and since $|\eta_0| \leq \|\eta_h\|_1 \leq 1$, one obtains that 
		\[
		\left| \frac1{L^n}  \Fluxnum{} \,\eta_0 \right| \leq \frac{\min(a - b m, bM-a)}{L_\flat}.
		\]
		On the other hand, applying the Cauchy-Schwarz inequality, one gets that 
		\[
		 \frac1{L^n} \sum_{i=0}^I \Fluxnum{i+} \left( \eta_{i+1} - \eta_i \right) \leq \frac1{L^\flat} \left( \sum_{i=0}^I h_{i+\frac12} \left|\Fluxnum{i+} \right|^2 \right)^{\frac12}.
		\]
		Thanks to relation~\eqref{eq:Bern}, the Scharfetter-Gummel fluxes rewrite
		\begin{equation}\label{eq:flux.conv-diff}
		\Fluxnum{i+} = v_{i+\frac12}^n \frac{u_i^n + u_{i+1}^n}2 + \frac1{2L^n} \left( B(- L^n h_{i+\frac12} v_{i+\frac12}^n) + B(L^n h_{i+\frac12} v_{i+\frac12}^n)\right) \frac{u_i^n - u_{i+1}^n}{h_{i+\frac12}}, 
		\end{equation}
		The velocities $v_{i+\frac12}^n$ are uniformly bounded owing to~\eqref{eq:bound.v}. 
		As highlighted in the proof of Proposition~\ref{prop:estim_u_l2}, this yields a uniform bound on the quantity 
		\[\frac1{2L^n} \left( B(- L^n h_{i+\frac12} v_{i+\frac12}^n) + B(L^n h_{i+\frac12} v_{i+\frac12}^n)\right)\] as well.
		Using the elementary inequality $(p+q)^2 \leq 2 (p^2 + q^2)$, we therefore obtain that 
		\[
		\sum_{i=0}^I h_{i+\frac12} \left|\Fluxnum{i+} \right|^2 \leq \cter{cte:fluxL2.1} + \cter{cte:fluxL2.2} \|w_h^n\|_1^2
		\]
		for some quantities $\ctel{cte:fluxL2.1}$ and $\ctel{cte:fluxL2.2}$ not depending on the discretization parameters, but only on the data prescribed in the statement of Proposition~\ref{prop:estim_du_hs}. Summing the above estimate over $n \in \{1,\dots, N\}$ and using Proposition~\ref{prop:estim_u_l2} gives exactly~\eqref{eq:FL2}.
		
		Summing up the above calculations, we end up with the existence of quantities $\ctel{cte:En2.1}$ and $\ctel{cte:En2.2}$ depending only on the 
		prescribed data such that
		\(
		{|E^n|}^2 \leq \cter{cte:En2.1} + \cter{cte:En2.2}  \|w_h^n\|_1^2
		\)
		whatever the choice of $\eta_h \in S_\Mm$ provided that $\|\eta_h\|_1 \leq 1$. Therefore, this implies that 
		\[
		{\left\|\frac{w_h^n - w_h^{n-1}}{\dt}\right\|}_{-1}^2 \leq \cter{cte:En2.1} + \cter{cte:En2.2}  \|w_h^n\|_1^2. 
		\]
		Multiplying by $\dt$, summing over $n$, and using Proposition~\ref{prop:estim_u_l2} 
		proves~\eqref{eq:L2H-1}. 
	\end{proof}
	\subsection{Compactness results}
	
	In this part is presented the main compactness result for the solution to the numerical scheme \eqref{eq:scheme}.
	Before focusing on the density profile, let us start by the interfaces $\bX_{h,\dt} = (X_{0,h,\dt}, X_{1,h,\dt})$ and their 
	approximate derivatives 
	\[
	\p_{t, \dt} X_{\ell,h,\dt}(t) = \begin{cases}
\frac{X_{\ell,h,\dt}(t) - X_{\ell,h,\dt}(t-\dt)}{\dt} & \text{for}\; t \in (\dt, T], \\
\frac{X_\ell^1 - X_\ell^0}{\dt} & \text{for}\; t \in [0,\dt], 
\end{cases}
\qquad \ell \in \{0,1\}.
	\]
	
	\begin{lemma} \label{lem:compact.X}
	Under the assumptions of Proposition~\ref{prop:estim_u_l2}, there exist some Lipschitz continuous functions $X_0$ and $X_1$ defined on $[0,T]$, 
	with $X_0(0) = 0$, $X_1(0) = L^0$ and with $L(t) = X_1(t) - X_0(t) \in [L_\flat, L^\sharp]$ for all $t \in [0,T]$ 
	such that, up to a subsequence
	\begin{equation}\label{eq:X.Lip}
	X_{\ell, h, \dt} \underset{h,\dt \to 0}\longrightarrow X_\ell \quad \text{uniformly on $[0,T]$}, \qquad \ell \in \{0,1\}, 
	\end{equation}
	while 
	\begin{equation}\label{eq:X.Lip.2}
	\p_{t, \dt} X_{\ell,h,\dt}  \underset{h,\dt \to 0}\longrightarrow X_\ell' \quad \text{in the $L^\infty(0,T)$-weak-$\star$ sense}.
	\end{equation}
	\end{lemma}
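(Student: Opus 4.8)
The plan is to reduce the statement to a standard Arzel\`a--Ascoli compactness argument applied to piecewise-linear-in-time interpolants of the interface positions. The first step is to identify the discrete time derivative $\p_{t,\dt} X_{\ell,h,\dt}$ concretely: for $t \in (t^{n-1}, t^n]$ one has $X_{\ell,h,\dt}(t) = X_\ell^n$ and $X_{\ell,h,\dt}(t-\dt) = X_\ell^{n-1}$, so $\p_{t,\dt} X_{\ell,h,\dt}(t) = \delta^n[X_\ell]$ is simply the piecewise-constant function taking the value $\delta^n[X_\ell]$ on each interval $(t^{n-1}, t^n]$. I would then introduce the continuous piecewise-linear interpolant $\widetilde X_{\ell,h,\dt}$ that equals $X_\ell^n$ at each node $t^n$; its almost-everywhere derivative coincides exactly with $\p_{t,\dt} X_{\ell,h,\dt}$, which is the elementary identity linking the two convergences we must establish.

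The key quantitative input comes from Corollary~\ref{coro:bounds.vitesse}: the bounds \eqref{eq:bound.deltaX1} and \eqref{eq:bound.deltaL} control $\delta^n[X_1]$ and $\delta^n[L]$ uniformly in $n$ and in the discretization parameters, and since $\delta^n[X_0] = \delta^n[X_1] - \delta^n[L]$, all three discrete velocities are bounded by a constant depending only on the prescribed data. Consequently each $\widetilde X_{\ell,h,\dt}$ is Lipschitz with a uniform Lipschitz constant; together with the fixed initial values $X_0^0 = 0$ and $X_1^0 = L^0$, this produces a family that is uniformly bounded and equicontinuous on $[0,T]$. By Arzel\`a--Ascoli and a diagonal extraction over $\ell \in \{0,1\}$, I would extract a common subsequence along which $\widetilde X_{\ell,h,\dt} \to X_\ell$ uniformly, with $X_\ell$ Lipschitz and satisfying $X_0(0)=0$, $X_1(0)=L^0$.

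It then remains to transfer these conclusions to the piecewise-constant reconstructions and to the derivatives. Since $|X_{\ell,h,\dt}(t) - \widetilde X_{\ell,h,\dt}(t)| \leq \dt \, \|\p_{t,\dt} X_{\ell,h,\dt}\|_{L^\infty(0,T)} \leq C\dt$ uniformly in $t$, the piecewise-constant $X_{\ell,h,\dt}$ converges to the same limit $X_\ell$, which gives \eqref{eq:X.Lip}. Passing to the limit in the pointwise bound $L^n \in [L_\flat, L^\sharp]$ via the uniform convergence of $L_{h,\dt} = X_{1,h,\dt} - X_{0,h,\dt}$ yields $L(t) \in [L_\flat, L^\sharp]$ for all $t$. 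For \eqref{eq:X.Lip.2}, the uniform $L^\infty(0,T)$ bound on $\p_{t,\dt} X_{\ell,h,\dt}$ gives, by Banach--Alaoglu, weak-$\star$ convergence along a further subsequence to some limit; since $\p_{t,\dt} X_{\ell,h,\dt} = \widetilde X'_{\ell,h,\dt}$ and $\widetilde X_{\ell,h,\dt} \to X_\ell$ uniformly, hence in the sense of distributions, the distributional derivatives converge to $X_\ell'$, and uniqueness of limits identifies the weak-$\star$ limit with $X_\ell'$.

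I do not expect any serious obstacle here, the argument being a textbook compactness scheme; the only points that require care are the bookkeeping of subsequences so that both interfaces---and, later in the paper, the density profile---share a common extraction, and the identification of the weak-$\star$ $L^\infty$ limit of $\p_{t,\dt} X_{\ell,h,\dt}$ with the genuine derivative $X_\ell'$, for which the identity $\p_{t,\dt} X_{\ell,h,\dt} = \widetilde X'_{\ell,h,\dt}$ does all the work.
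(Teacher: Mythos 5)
Your proof is correct and follows essentially the same route as the paper: uniform bounds on the discrete interface velocities from Corollary~\ref{coro:bounds.vitesse}, compactness of the positions via Arzel\`a--Ascoli, and weak-$\star$ compactness of the discrete derivatives identified with $X_\ell'$ through distributional convergence. The only difference is technical: the paper applies a refined Ascoli--Arzel\`a theorem directly to the discontinuous piecewise-constant reconstructions, which satisfy only the approximate equicontinuity bound $|X_{\ell,h,\dt}(t+\tau)-X_{\ell,h,\dt}(t)|\leq C(\tau+\dt)$, whereas you route through the genuinely Lipschitz piecewise-linear interpolants and the classical theorem, then transfer back via the $O(\dt)$ discrepancy --- both are valid.
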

	\begin{proof}
	The uniform bounds on $\Delta X_\ell^n$ yield uniform bounds on $\p_{t, \dt} X_{\ell,h,\dt}$, thus there exist $g_\ell$ in $L^\infty(0,T)$, $\ell \in \{0,1\}$, 
	such that 
	\[
	\p_{t, \dt} X_{\ell,h,\dt}  \underset{h,\dt \to 0}\longrightarrow g_\ell \quad \text{in the $L^\infty(0,T)$-weak-$\star$ sense}.
	\]
	For $\tau>0$ and $t \in [0,T-\tau)$, one furthermore has
	\[
	|X_{\ell,h,\dt}(t+\tau) - X_{\ell, h, \dt}| \leq C(\tau+\dt)
	\]
	with $C$ being a uniform upper bound for $\Delta X^n_\ell$, $n \geq 0$. 
	We can then apply the refined version of the Ascoli-Arzelà theorem \cite[Proposition 3.3.1]{AGS08} to infer the desired uniform convergence. 
	The preservation of the bounds on $L$ follows. 
	Showing that $g_\ell = X_\ell'$ in the sense of distributions in then classical. 
	\end{proof}
	
Let us now consider the compactness of the approximate density profiles on the fixed domain $w_{h,\Delta t}$.
	
		\begin{lemma} \label{lem:aubsim}
			Under the assumptions of Proposition~\ref{prop:estim_u_l2}, there exists a function $w \in L^\infty((0,T)\times(0,1)) \cap L^2(0,T);H^1(0,1))$ such that, up to a subsequence, $w_{h,\dt} \longrightarrow w$ (strongly) in $L^2((0,T)\times(0,1))$ as $h,\dt \rightarrow 0$ and $\partial_{\xi,h} w_{h,\dt} \longrightarrow \partial_\xi w$ weakly in $L^2((0,T)\times(0,1))$. Moreover, we have the strong convergence of the traces 
			\begin{equation}\label{eq:conv.traces}
			w_{h,\dt}(t, 0) \underset{h,\dt \to 0} \longrightarrow w(t,0) \quad \text{and} \quad  w_{h,\dt}(t, 1) \underset{h,\dt \to 0} \longrightarrow w(t,1) \quad 
			\text{a.e. in } (0,T), 
			\end{equation}
			and initial profile
			\begin{equation}\label{eq:conv.init}
			w_{h,\dt}(0, \xi) \underset{h \to 0} \longrightarrow w^\text{init}(\xi) = u^\text{init}(\xi L^0)\quad
			\text{for a.e.}\; \xi \in (0,1). 
			\end{equation}
		\end{lemma}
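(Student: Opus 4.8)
The plan is to obtain the statement through a discrete Aubin--Simon compactness argument, for which the estimates gathered so far supply exactly the required ingredients. Proposition~\ref{prop:estim_u_l2} provides a uniform bound on $w_{h,\dt}$ in the discrete $L^2(H^1)$ norm $\normelh{\cdot}$, and Proposition~\ref{prop:estim_du_hs} controls the discrete time derivative $\ddisc w_{h,\dt}$ in the discrete $L^2((H^1)')$ norm $\normelhs{\cdot}$. Together with the Gelfand triple $H^1(0,1) \hookrightarrow L^2(0,1) \hookrightarrow (H^1)'(0,1)$, whose first embedding is compact, these are precisely the hypotheses of a discrete Aubin--Simon theorem. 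I would first invoke it to extract a subsequence along which $w_{h,\dt}$ converges strongly in $L^2((0,T)\times(0,1))$ to some limit $w$. The uniform bound $m \leq u_i^n \leq M$ of Proposition~\ref{th:bornes_apr} then passes to the limit, yielding $w \in L^\infty$ with $w \geq m \geq 0$.

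The next step identifies the gradient of the limit. Since the $L^2((0,T)\times(0,1))$ norm of $\partial_{\xi,h}w_{h,\dt}$ is controlled by $\normelh{w_{h,\dt}}$, I would extract a further subsequence so that $\partial_{\xi,h}w_{h,\dt}$ converges weakly in $L^2$ to some $g$. To prove $g = \partial_\xi w$, I would test against $\psi \in \mathcal{C}^\infty_c((0,T)\times(0,1))$, perform a discrete summation by parts relating $\iint w_{h,\dt}\,\partial_\xi\psi$ to $-\iint \partial_{\xi,h}w_{h,\dt}\,\psi$ up to a consistency error vanishing with $h$ (boundary contributions disappearing by the compact support of $\psi$). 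Passing to the limit using the strong convergence of $w_{h,\dt}$ and the weak convergence of $\partial_{\xi,h}w_{h,\dt}$ gives $\iint w\,\partial_\xi\psi = -\iint g\,\psi$, hence $g = \partial_\xi w$ in the distributional sense; since $g \in L^2$, this yields $w \in L^2((0,T);H^1(0,1))$ together with the announced weak convergence of the gradients.

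For the convergence~\eqref{eq:conv.traces} of the traces, I would use a trace-reconstruction identity: for $z \in H^1(0,1)$ the value $z(0)$ equals the average of $z$ over $(0,\epsilon)$ up to a remainder bounded by $C\sqrt\epsilon\,\|z'\|_{L^2(0,1)}$, and the same holds at the discrete level with $z'$ replaced by $\partial_{\xi,h}$. Writing $\bar{w}^\epsilon_{h,\dt}(t)$ and $\bar{w}^\epsilon(t)$ for the averages of $w_{h,\dt}(t,\cdot)$ and $w(t,\cdot)$ over $(0,\epsilon)$, the strong $L^2$ bulk convergence gives $\bar{w}^\epsilon_{h,\dt} \to \bar{w}^\epsilon$ in $L^2(0,T)$ for each fixed $\epsilon$, while the uniform gradient bound makes both $\|w_{h,\dt}(\cdot,0)-\bar{w}^\epsilon_{h,\dt}\|_{L^2(0,T)}$ and $\|w(\cdot,0)-\bar{w}^\epsilon\|_{L^2(0,T)}$ of order $\sqrt\epsilon$. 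Sending first $h,\dt \to 0$ and then $\epsilon \to 0$ yields $w_{h,\dt}(\cdot,0) \to w(\cdot,0)$ in $L^2(0,T)$, hence a.e.\ along a subsequence; the trace at $\xi=1$ is treated identically. I expect this trace step to be the main obstacle, since it is where the uniform-in-mesh control of the remainder and the interplay between bulk and boundary convergence must be handled carefully.

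Finally, the convergence~\eqref{eq:conv.init} is immediate. By the initialization~\eqref{eq:u0.disc}, after the change of variable $x = L^0\xi$, the value $u_i^0$ is the cell average of $\xi \mapsto u^\text{init}(L^0\xi)$, so that $w_{h,\dt}(0,\cdot)$ is the $L^2(0,1)$-orthogonal projection onto piecewise constants of this bounded function. It therefore converges to $\xi \mapsto u^\text{init}(L^0\xi)$ strongly in $L^2(0,1)$ as $h \to 0$, hence a.e.\ along a subsequence, which is~\eqref{eq:conv.init}.
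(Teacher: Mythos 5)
Your proposal is correct and follows essentially the same route as the paper: discrete Aubin--Simon compactness from the $L^2(H^1)$ and $L^2((H^1)')$ estimates of Propositions~\ref{prop:estim_u_l2} and \ref{prop:estim_du_hs}, identification of the weak limit of $\partial_{\xi,h}w_{h,\dt}$ as $\partial_\xi w$, trace convergence via comparison with averages near the boundary, and consistency of the initial data. The only difference is that the paper delegates the gradient identification and the trace argument to references, whereas you spell out the standard arguments explicitly; your $\sqrt\epsilon$-averaging estimate for the traces is a correct one-dimensional version of the cited strategy.
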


	\begin{proof}
	As $m \leq w_{h,\dt} \leq M$ a.e. in $(0,T) \times (0,1)$, there exists $w$ measurable with $m \leq w \leq M$ such that $w_{h,\dt}$ converges, up to a subsequence, 
	towards $w$ in the $L^\infty(0,T) \times (0,1)$ weak-$\star$ sense. 	
		With Propositions \ref{prop:estim_u_l2} and \ref{prop:estim_du_hs} at hand, one can directly apply the discrete Aubin-Simon lemma \cite[Theorem 3.4]{GallLatch}, and get that, up to the extraction of another subsequence, $ w_{h,\dt} \longrightarrow w$ in $L^2((0,T) ; L^2(0,1))$ as $h,\dt \rightarrow 0$. 
		Moreover, we deduce from Proposition~\ref{prop:estim_u_l2} that $\partial_{\xi,h} w_{h,\dt}$ is bounded in $L^2((0,T)\times(0,1))$, hence its weak in $L^2$ convergence towards some limit $f$.
		A straightforward adaptation in \cite[Lemma 8.2]{EGH00} shows that the limiting profile $w$ belongs to $L^2((0,T);H^1(0,1))$. The identification $f = \partial_\xi w$ is classical. 
		
	Finally, the convergence of the traces~\eqref{eq:conv.traces} can be proven by adapting the strategy of~\cite[Section 4.2]{BrenCancHilh} to our simpler one-dimensional setting, while the convergence property~\eqref{eq:conv.init} is nothing but the classical consistence result for the approximation~\eqref{eq:u0.disc}. 
	\end{proof}

\subsection{Identification of the limit as a weak solution}

The goal of this section is to establish our last theoretical result, which by the way grants the existence of a weak solution up to a minimal time thanks to Proposition~\ref{prop:Ln_bd}. 
As a preliminary, let us show some compactness and consistency result on the approximate fluxes following ideas from \cite[Lemma 4.4]{CHD11}.
\begin{lemma}\label{lem:conv.F}
Under the assumptions of Proposition~\ref{prop:estim_u_l2}, $\Ff_{h,\dt}$ defined by~\eqref{eq:Fhdt} converges, up to a subsequence, weakly in $L^2((0,T)\times(0,1))$ towards 
\begin{equation}\label{eq:def.v}
\Ff = v\,w - \frac1L \partial_\xi w \quad \text{with} \quad v(t,\xi) = (1-R) X_1'(t) - L'(t) \xi - X_0'(t).
\end{equation}
\end{lemma}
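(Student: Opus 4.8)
The plan is to pass to the limit directly in the convection--diffusion form~\eqref{eq:flux.conv-diff} of the Scharfetter--Gummel flux, identifying separately the limits of the convective and of the diffusive contributions. Estimate~\eqref{eq:FL2} of Proposition~\ref{prop:estim_du_hs} first guarantees that $\Ff_{h,\dt}$ is bounded in $L^2((0,T)\times(0,1))$, so that, up to extracting a further subsequence from the one produced by Lemmas~\ref{lem:compact.X} and~\ref{lem:aubsim}, it converges weakly in $L^2$ towards some limit $\overline{\Ff}$; it then only remains to identify $\overline{\Ff}=\Ff$. Using $\frac{u_i^n-u_{i+1}^n}{\h{i+}}=-\partial_{\xi,h}w_{h,\dt}$, I would write, for $(t,\xi)\in(t^{n-1},t^n]\times(\xi_i,\xi_{i+1})$,
\[
\Ff_{h,\dt}(t,\xi) = \vhdt(t,\xi)\,\frac{u_i^n+u_{i+1}^n}{2} - c_{h,\dt}(t,\xi)\,\partial_{\xi,h}w_{h,\dt}(t,\xi), \qquad c_{h,\dt} = \frac{1}{2L^n}\left(\Bern{i+}{n}{-}+\Bern{i+}{n}{}\right),
\]
where $\vhdt$ denotes the piecewise constant reconstruction of $\v{i+}{n}$ defined in~\eqref{eq:def_v}.

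For the diffusive part, I would show that the coefficient $c_{h,\dt}$ converges to $1/L$ in $L^\infty((0,T)\times(0,1))$. Indeed, property \emph{(i)} of Lemma~\ref{lem:Bern} gives $\tfrac12(B(-r)+B(r))=B(r)+r/2$, and the $1$-Lipschitz continuity of $B$ together with $B(0)=1$ yields $\bigl|\tfrac12(B(-r)+B(r))-1\bigr|\leq \tfrac32|r|$. Since $|L^n\h{i+}\v{i+}{n}|\leq L^\sharp\,h\,\max(|v_\flat|,|v^\sharp|)\to 0$ uniformly by~\eqref{eq:bound.v}, and since $L^n\geq L_\flat$ with $L_{h,\dt}\to L$ uniformly by Lemma~\ref{lem:compact.X}, one obtains $\|c_{h,\dt}-1/L\|_{L^\infty}\to 0$. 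Combined with the weak $L^2$ convergence $\partial_{\xi,h}w_{h,\dt}\rightharpoonup\partial_\xi w$ of Lemma~\ref{lem:aubsim}, the product of an $L^\infty$-norm convergent factor with a weakly $L^2$ convergent one converges weakly, so that $c_{h,\dt}\,\partial_{\xi,h}w_{h,\dt}\rightharpoonup \tfrac1L\partial_\xi w$ in $L^2$.

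For the convective part, I would first check that $\vhdt\rightharpoonup v$ in the $L^\infty$-weak-$\star$ sense: by~\eqref{eq:def_v} it is a fixed affine combination of the $\partial_{t,\dt}X_{\ell,h,\dt}$, which converge weak-$\star$ by~\eqref{eq:X.Lip.2}, the only spatial dependence entering through $\xi_{i+\frac12}$, which converges to $\xi$ uniformly (error $\leq h$). Next I would argue that the half-sum reconstruction $\bigl[\tfrac{u_i^n+u_{i+1}^n}{2}\bigr]$ shares the strong $L^2$ limit $w$ of $w_{h,\dt}$: on each dual cell their difference equals $\pm\tfrac{u_{i+1}^n-u_i^n}{2}$, so its $L^2$ norm is bounded by $\tfrac{h}{2}\normelh{w_{h,\dt}}$ via the $L^2(H^1)$ estimate of Proposition~\ref{prop:estim_u_l2}, hence tends to $0$. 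A weak-$\star$-times-strong convergence argument then gives $\vhdt\,\bigl[\tfrac{u_i^n+u_{i+1}^n}{2}\bigr]\rightharpoonup v\,w$ weakly in $L^2$. Summing the two contributions yields $\overline{\Ff}=v\,w-\tfrac1L\partial_\xi w=\Ff$, and the uniqueness of the weak limit concludes.

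The main obstacle will be the convective term, where one must take the limit of a product whose velocity factor only converges weak-$\star$; the argument relies both on the strong $L^2$ convergence of the density (and on the fact that its one-cell-shifted half-sum reconstruction inherits the same limit, which is precisely where the $H^1$-type control is used to discard the shift) and on the explicit, mesh-independent structure of $v$ that makes $\vhdt$ a fixed affine function of the boundary velocities. By contrast, the diffusive term is comparatively harmless, since its coefficient converges in the $L^\infty$ norm.
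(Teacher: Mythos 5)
Your proposal is correct and follows essentially the same route as the paper: both start from the convection--diffusion rewriting~\eqref{eq:flux.conv-diff}, use the $1$-Lipschitz continuity of $B$ with $B(0)=1$ and the uniform bound~\eqref{eq:bound.v} to reduce the diffusive coefficient to $1/L$, compare the half-sum reconstruction with $w_{h,\dt}$ via the $L^2(H^1)$ estimate, and conclude by weak-$\star$-times-strong convergence. The only cosmetic differences are that you first extract a weak $L^2$ limit of $\Ff_{h,\dt}$ from~\eqref{eq:FL2} before identifying it, and that you absorb the Bernoulli half-sum into a coefficient converging in $L^\infty$ to $1/L$ rather than splitting off a remainder $r_{h,\dt}$ vanishing in $L^2$ as the paper does; these are equivalent.
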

\begin{proof}
As a consequence of Lemma~\ref{lem:compact.X}, the discrete velocity field 
\[
v_{h,\dt}(t,\xi) = v_{i+\frac12}^n \quad \text{if}\; (t,\xi) \in (t^{n-1}, t^n] \times (\xi_i,\xi_{i+1})
\]
converges in the $L^\infty((0,T) \times (0,1))$ weak-$\star$ sense towards $v$ defined in~\eqref{eq:def.v}.

Starting from expression~\eqref{eq:flux.conv-diff} for the flux $\Ff_{i+\frac12}^n$ and using $B(0) = 1$, one gets that 
\begin{equation}\label{eq:Fhdt.conv-diff}
\Ff_{h,\dt} = v_{h,\dt} \overline  w_{h,\dt} - \frac1{L_{h,\dt}} \p_{\xi,h} w_{h,\dt} + r_{h,\dt}, 
\end{equation}
where we have set 
\[
r_{h,\dt}(t,\xi) =  \frac{u_i^n - u_{i+1}^n}{2 L^n h_{i+\frac12}} \left( B(- L^n h_{i+\frac12} v_{i+\frac12}^n) + B(L^n h_{i+\frac12} v_{i+\frac12}^n) - 2 B(0) \right) 
\]
and $\overline  w_{h,\dt}(t,\xi) =  \frac{u_i^n + u_{i+1}^n}2$ for $(t,\xi) \in (t^{n-1}, t^n] \times (\xi_i, \xi_{i+1})$.
Since $B$ is $1$-Lipschitz, we obtain that 
\[
|r_{h,\dt}| \leq h\, |v_{h,\dt}| \, |\p_{\xi,h} w_{h,\dt}| \underset{h,\dt\to 0}\longrightarrow 0 \quad \text{in } L^2((0,T) \times (0,1))
\]
thanks to the uniform boundedness of $v_{h,\dt}$ in $L^\infty(0,T) \times (0,1))$ and of $\p_{\xi,h} w_{h,\dt}$ in $L^2((0,T) \times (0,1))$, cf. Proposition~\ref{prop:estim_u_l2}. Moreover, one readily checks that 
\[
| \overline  w_{h,\dt}(t,\xi)  - w_{h,\dt}(t,\xi) | \leq h  |\p_{\xi,h} w_{h,\dt}|, 
\]
so that, thanks to Lemma~\ref{lem:aubsim}, $\overline  w_{h,\dt}(t,\xi)$ also converges strongly in $L^2$ towards $w$. So, using Lemmas~\ref{lem:compact.X} ad \ref{lem:aubsim}, we can pass to the limit $h,\dt \to 0$ in \eqref{eq:Fhdt.conv-diff} to recover~\eqref{eq:def.v}.
\end{proof}

We are now in position to prove the main result of this section. 
\begin{theorem}\label{th:conv}
Let $\bX = (X_0,X_1)$ be as in Lemma~\ref{lem:compact.X}, let $w$ be defined as in Lemma~\ref{lem:aubsim} and $u$ be defined by $u(t,x)= w(t,\dfrac{x-X_0(t)}{X_1(t)-X_0(t)})$ , then $(u,X_0,X_1)$ is a weak solution in the sense of Definition~\ref{def:weak}.
\end{theorem}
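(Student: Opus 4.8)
The plan is to exploit Remark~\ref{rk:def-weak-sol}: since $u$ is defined from $w$ through the change of variables $x = X_0(t) + \xi L(t)$, it suffices to check that $w$ satisfies the weak formulation~\eqref{eq:weak-w} together with the two boundary ODEs, the regularity and positivity requirements being then transferred to $u$ automatically. Points \emph{(i)} and \emph{(ii)} of Definition~\ref{def:weak} are essentially already granted: Lemma~\ref{lem:compact.X} provides the Lipschitz regularity of $X_0, X_1$, the prescribed initial values, and $L \in [L_\flat, L^\sharp]$, while Lemma~\ref{lem:aubsim} gives $w \in L^\infty \cap L^2(0,T;H^1(0,1))$ with $m \leq w \leq M$ (hence $u \geq 0$, since $m \geq 0$ under~\eqref{hyp}). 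Since $L \geq L_\flat > 0$, the relation $\d{x} u = \frac1L \d{\xi} w$ then shows $\d{x} u \in L^2(Q_T)$.

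Second, I would pass to the limit in the discrete boundary laws. The reconstructions $\p_{t,\dt} X_{\ell,h,\dt}$ coincide with $\delta^n[X_\ell]$ on each $(t^{n-1}, t^n]$, so~\eqref{eq:X1} reads $\p_{t,\dt} X_{1,h,\dt} = -\alpha_1 + \beta_1 w_{h,\dt}(\cdot, 1)$ almost everywhere. By Lemma~\ref{lem:compact.X} the left-hand side converges to $X_1'$ in the $L^\infty(0,T)$ weak-$\star$ sense, while the trace $w_{h,\dt}(\cdot,1)$ converges almost everywhere and boundedly to $w(\cdot,1)$ by~\eqref{eq:conv.traces}, hence strongly in every $L^p(0,T)$, $p < \infty$. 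Testing the identity against an arbitrary function of $L^1(0,T)$ and passing to the limit yields $X_1'(t) = -\alpha_1 + \beta_1 w(t,1)$ for a.e. $t$. The same argument applied to~\eqref{eq:X0}, combined with~\eqref{eq:L}, gives the relation for $X_0'$, so point \emph{(iv)} holds.

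Third, and this is the core of the proof, I would establish~\eqref{eq:weak-w}. Fix $\varphi \in \mathcal{C}^\infty_c([0,T) \times [0,1])$ and set $\varphi_i^n = \varphi(t^n, \xi_i)$. Multiplying~\eqref{eq:scheme_u} by $\dt\,\varphi_i^{n-1}$, summing over $1 \leq i \leq I$ and $1 \leq n \leq N$, and performing an Abel summation in time on the accumulation term produces a consistent approximation of $-\iint L w\,\d{t}\varphi$ plus an initial contribution which, thanks to~\eqref{eq:conv.init} and the uniform convergence $L_{h,\dt} \to L$, tends to $\int_0^1 L(0)\,u^\text{init}(L(0)\xi)\,\varphi(0,\xi)\,\dd\xi$; here I use that $L_{h,\dt} w_{h,\dt} \to L w$ strongly in $L^2$, coming from Lemma~\ref{lem:aubsim} and the uniform convergence of $L_{h,\dt}$. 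An Abel summation in space on the flux differences, together with the discrete boundary conditions~\eqref{eq:left_cond_flux}--\eqref{eq:right_cond_flux} (in particular $\Fluxnum{I+} = 0$ and $\Fluxnum{} = a - b u_0^n$), converts the flux term into $-\iint \Ff_{h,\dt}\,\d{\xi}\varphi$ plus the boundary contribution $\int_0^T (a - b\,w_{h,\dt}(\cdot,0))\varphi(\cdot,0)$. Passing to the limit, the flux term converges by the weak $L^2$ convergence of Lemma~\ref{lem:conv.F} paired with the uniformly convergent discrete gradient of $\varphi$, reproducing exactly the bracket $\left(-\frac1L\d{\xi}w + ((1-R)X_1' - L'\xi - X_0')w\right)\d{\xi}\varphi$ of~\eqref{eq:weak-w} in view of~\eqref{eq:def.v}, while the boundary term converges by~\eqref{eq:conv.traces}. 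Collecting all contributions gives~\eqref{eq:weak-w}, and Remark~\ref{rk:def-weak-sol} concludes.

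I expect the main obstacle to be the passage to the limit in the flux term: the product structure $\Ff = v\,w - \frac1L\d{\xi}w$ mixes a weak-$\star$ limit ($v$), a strong limit ($w$) and a weak limit ($\d{\xi}w$), and it is precisely Lemma~\ref{lem:conv.F} that isolates this difficulty upstream, so that here it suffices to pair the weakly converging $\Ff_{h,\dt}$ with the strongly converging test gradient. The remaining delicate point is the justification of the boundary term at $\xi=0$, which rests on the almost-everywhere trace convergence~\eqref{eq:conv.traces}; all other terms reduce to standard consistency estimates for smooth test functions and careful bookkeeping of the two Abel summations.
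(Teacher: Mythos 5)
Your proposal is correct and follows essentially the same route as the paper's proof: passing to the limit in the discrete boundary laws via Lemma~\ref{lem:compact.X} and the trace convergence~\eqref{eq:conv.traces}, then multiplying the scheme by $\varphi_i^{n-1}$, performing the two discrete summations by parts, and invoking Lemma~\ref{lem:conv.F} for the flux term, \eqref{eq:conv.init} for the initial term, and Remark~\ref{rk:def-weak-sol} to conclude. The only (welcome) difference is that you make explicit the verification of points \emph{(i)} and \emph{(ii)} of Definition~\ref{def:weak}, which the paper leaves implicit.
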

\begin{proof}
Relations \eqref{eq:X0}--\eqref{eq:X1} for the motion of the free boundaries rewrite 
\begin{align*}
\p_{t,\dt} X_{1,h,\dt} =& -\alpha_1 +\beta_1 w_{h,\dt}(\cdot, 1), \\
\p_{t,\dt} X_{0,h,\dt}  = &(1-R) \p_{t,\dt} X_{1,h,\dt} + \alpha_0 -  \beta_0 w_{h,\dt}(\cdot, 0).
\end{align*}
We can pass to the limit $h,\dt \to 0$ in the above relations thanks to Lemmas~\ref{lem:compact.X} and \ref{lem:aubsim}, leading to 
\(
X_1' = -\alpha_1 +\beta_1 w(\cdot, 1) \mbox{ and } 
X_0'  = (1-R) X_1' + \alpha_0 -  \beta_0 w (\cdot, 0).
\)

Now let $\varphi \in C^\infty_c([0,T)\times[0,1])$. For $i \in \{0,\dots, I+1\}$ and $n \in \{0,\dots, N\}$, we denote by $\varphi_i^n = \varphi(t^n, \xi_i)$,
by $\varphi_{h,\dt} = \Pi_{\Mm,\dt}\left( (\varphi_i^n)_{i,n} \right)$, and by $\partial_{t,\dt} \varphi_{h,\dt}$ and $\partial_{\xi,h} \varphi_{h,\dt}$ the approximate time and space derivatives of 
$\varphi_{h,\dt}$ respectively defined with formulas~\eqref{eq:dtwhdt} and \eqref{eq:dxiwhdt} with $\varphi$ instead of $w$. Note that $\varphi_i^N = 0$. 
Because of the regularity of $\varphi$, one readily shows that 
\begin{align}
\varphi_{h,\dt} \underset{h,\dt \to 0} \longrightarrow & \; \varphi \quad \text{uniformly on }[0,T] \times [0,1], 
\label{eq:conv.phi}\\
\partial_{t,\dt} \varphi_{h,\dt} \underset{h,\dt \to 0} \longrightarrow & \; \partial_t \varphi \quad \text{in the } L^\infty((0,T) \times (0,1))\text{-weak-$\star$ sense}, 
\label{eq:conv.dtphi}\\
\partial_{\xi,h} \varphi_{h,\dt} \underset{h,\dt \to 0} \longrightarrow & \; \partial_\xi \varphi \quad \text{uniformly on }[0,T] \times [0,1].
\label{eq:conv.dxiphi}
\end{align}

Multiplying the scheme~\eqref{eq:scheme_u} by $\varphi_i^{n-1}$ 
and summing over $1 \leq i \leq I$ and $1 \leq n \leq N$ yields 
\begin{equation}\label{eq:A+Bhdt}
A_{h,\dt} + B_{h,\dt}  = 0, 
\end{equation}
where we have set 
\begin{align*}
    A_{h,\dt} =\;& \sum_{n=1}^N \sum_{i=1}^n (L^n u_i^n - L^{n-1}u_i^{n-1})\varphi_i^{n-1}h_i, \\
    B_{h,\dt} = \;& \sum_{n=1}^N \dt \sum_{i=1}^n \left( \Fluxnum{i+} - \Fluxnum{i-} \right)  \varphi_i^{n-1}.
\end{align*}
Applying a summation by parts to $A_{h,\dt}$ gives 
\[
A_{h,\dt} = A_{h,\dt}' + A_{h,\dt}'' = \sum_{n=1}^N \sum_{i=1}^n L^n u_i^n (\varphi_i^{n-1} - \varphi_i^n) h_i - \sum_{i=1}^n L^0 u_i^0 \varphi_i^0.
\]
Because of the uniform convergence of $L_{h,\dt}$ towards $L$, {\em cf.} Lemma~\ref{lem:compact.X}, of the strong in $L^2$ convergence 
of $w_{h,\dt}$ towards $w$, {\em cf.} Lemma~\ref{lem:aubsim}, and thanks to~\eqref{eq:conv.dtphi}, one can pass to the limit in the first term in the above right-hand side  to obtain 
\[
A_{h,\dt}' =  \int_0^T \int_0^1 L_{h,\dt} w_{h,\dt} \p_{t,\dt} \varphi_{h,\dt}  \underset{h,\dt \to 0} \longrightarrow  \int_0^T \int_0^1 L\, w\, \p_t\varphi. 
\]
For the second contribution, one deduces from~\eqref{eq:conv.init} and \eqref{eq:conv.phi} that 
\[
A_{h,\dt}'' =  \int_0^1 L^0 w_h^0 \varphi_{h,\dt}(0,\cdot)   \underset{h \to 0}\longrightarrow  \int_0^1 L^0 u^\text{init}(L^0 \xi)  \varphi(0,\xi) \dd \xi.
\]

On the other hand, yet another summation by parts provides 
\[
 B_{h,\dt} = B'_{h,\dt} +  B''_{h,\dt} = \sum_{n=1}^N \dt \sum_{i=1}^n \Fluxnum{i+} \left(\varphi_i^{n-1} - \varphi_{i+1}^n\right) - \sum_{n=1}^N \Ff_{\frac12}^n \varphi_0^n.
\]
The first term in the above right-hand side rewrites 
\[
B'_{h,\dt} = - \int_0^T \int_0^1 \Ff_{h,\dt} \p_{\xi,h} \varphi_{h,\dt} 
\underset{h,\dt \to 0}\longrightarrow   \int_0^T \int_0^1\left (\frac1L\p_\xi w - v\, w\right) \p_\xi \varphi
\]
owing to Lemma~\ref{lem:conv.F} and to \eqref{eq:conv.dxiphi}. For the second term, one makes use of the definition~\eqref{eq:left_cond_flux} of the flux $\Ff_{\frac12}^n$ 
to write 
$$
B''_{h,\dt}  = \int_0^T (b\, w_{h,\dt}(t,0) - a) \varphi_{h,\dt}(t,0) \dd t \underset{h,\dt \to 0}\longrightarrow  \int_0^T (b\, w(t,0) - a) \varphi(t,0)\dd t
$$
thanks to  \eqref{eq:conv.phi}, to \eqref{eq:conv.traces}.
We then deduce from~\eqref{eq:A+Bhdt} and from the above calculations that $(w,X_0,X_1)$ satisfies \eqref{eq:weak-w} and all the points of Remark \ref{rk:def-weak-sol} are satisfied. It concludes the proof of Theorem~\ref{th:conv}.
\end{proof}	

\section{Numerical results} \label{sec:simu}

In this final section, we present some numerical experiments. The scheme being implicit, its solutions are obtained thanks to Newton's method, with a $10^{-10}$ tolerance between two consecutive iterations, on a uniform mesh.
The different test cases, corresponding to different parameters, are presented in Table \ref{table:test_param}. 
 The numerical results are obtained with $I = 100$ cells, and the time step is $\dt = 10^{-2}$, until different final times depending on the test case. 

Test case 1 satisfies the condition \eqref{hyp}, and one observes on Figure~\ref{fig:profiles} the convergence of the approximate solution $u_{h,\dt}$ towards the travelling wave profile $\hat u$. Condition \eqref{hyp} is satisfied neither for test case 2, for which the oxide layer disappears in finite time, nor for test case 3 where the oxide layer grows indefinitely.

\begin{table}[htb]
\centering
\caption{Parameters of the test cases.}

\begin{tabular}{ |c|c|c|c|c|c|c|c|c|c|c|c| } 
	\hline
	  & $a$ & $b$ & $\alpha_0$ & $\beta_0$ & $\alpha_1$ & $\beta_1$ & $R$ & $u^\text{init}$ & $L^0$ & $T$ & $\eqref{hyp}$ \\
	\hline
	Test case 1 & $1$ & $1$ & $1.5$ & $1$ & $0.5$ & $4$ & $2$ & $\frac{a}{b} e^{-R\hat{c}x} + 2$ & $1$ &$20$  & \ding{51} \\
	\hline
	Test case 2 & $1.75$ & $1$ & $5$ & $2$ & $5$ & $2$ & $2$ & $\frac{a}{b} e^{-R\hat{c}x} + 2$ & $1$ & $3.5$ & \ding{55}  \\
	\hline
	Test case 3 & $1$ & $1$ & $4$ & $1$ & $3$ & $1.5$ & $2$ & $\frac{a}{b} e^{-R\hat{c}x} + 2$ & $1$ & $10$ & \ding{55} \\
	\hline
\end{tabular}
\label{table:test_param}

\end{table}

\begin{figure}[htb]
\begin{minipage}{0.32\textwidth}
	\includegraphics[height=3cm]{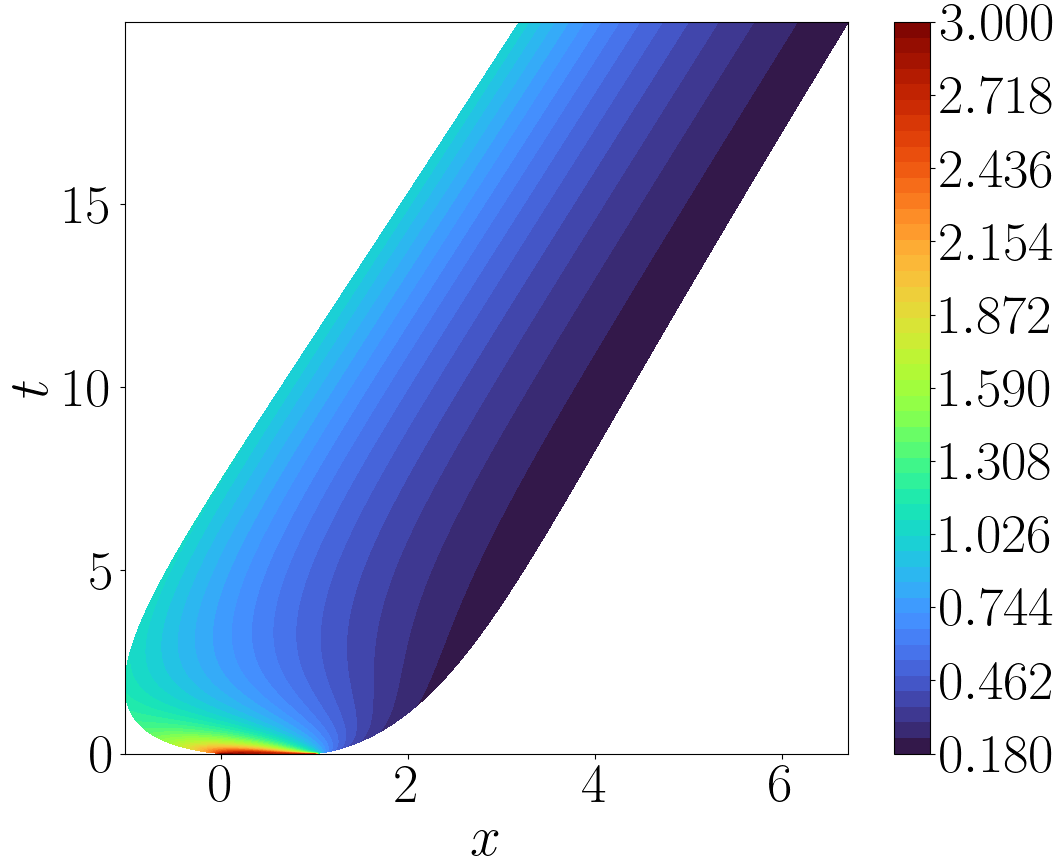}
\end{minipage}
\begin{minipage}{0.2cm}
\end{minipage}
\begin{minipage}{0.31\textwidth}
	\includegraphics[height=3cm]{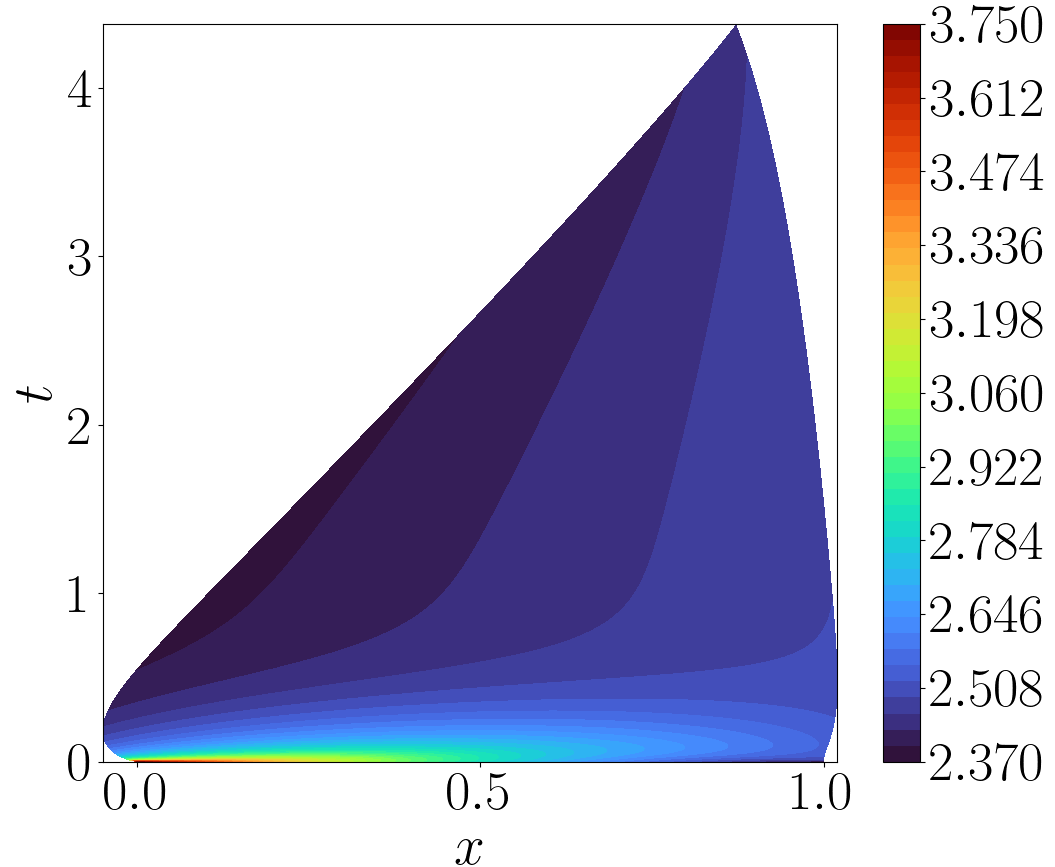}
\end{minipage}
\begin{minipage}{0.32\textwidth}
	\includegraphics[height=3cm]{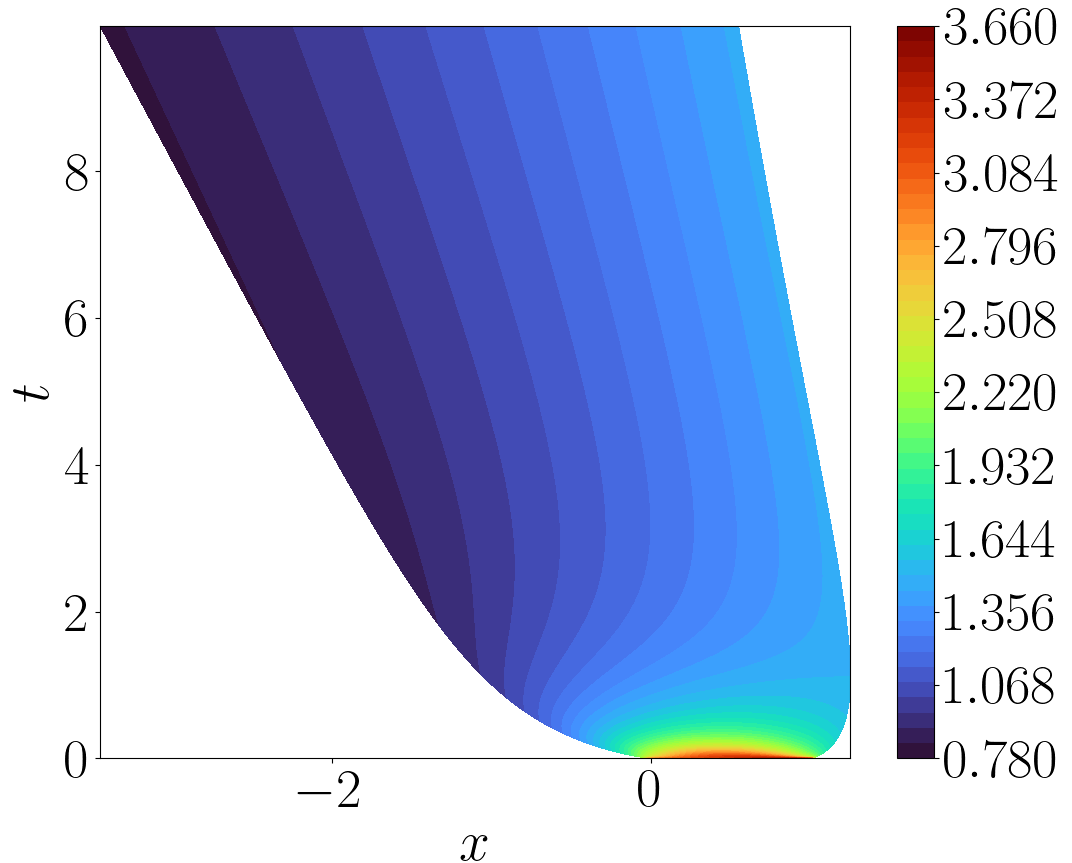}
\end{minipage}
\caption{Profiles of the approximate solution for test cases 1 (left), 2 (center) and 3 (right).}
\label{fig:profiles}
\end{figure}

\noindent
In order to compare the travelling wave profile and the other profile from test case1, both are rescaled in $[0,1]$, so it is possible compute a discrete distance between the approximate solution $u$ and the travelling wave profile $\hat{u}$. Precisely, after the rescaling, the distance between the two profiles at each time step $n$ is measured through the quantity
\(
	d^n = L^n \sum_{i=1}^{I}h_i (u_i^n - \hat{u}(\xi_i))^2.
\)
The evolution of this distance along time is plotted at the left of Figure \ref{fig:dist_log} in a semilogarithmic scale, to exhibit the exponential convergence of the (approximate) solution towards a travelling wave as time goes to $+\infty$. The evolution of the distance between the approximate width of the oxide $L^n$ and the one of the travelling wave $\hat{L}$ is also plotted, at the right of Figure \ref{fig:dist_log}. This behavior is in good accordance with the results obtained in \cite{Bataillon_elec} and \cite{Calipso} for more complex corrosion models. 
Observe on Figure \ref{fig:dist_log} that the computation ultimately reaches machine error. This is a consequence of the exactness of our scheme for the travelling wave. 
\begin{figure}[htb]
    \begin{minipage}{0.48\textwidth}
      \includegraphics[scale=0.15]{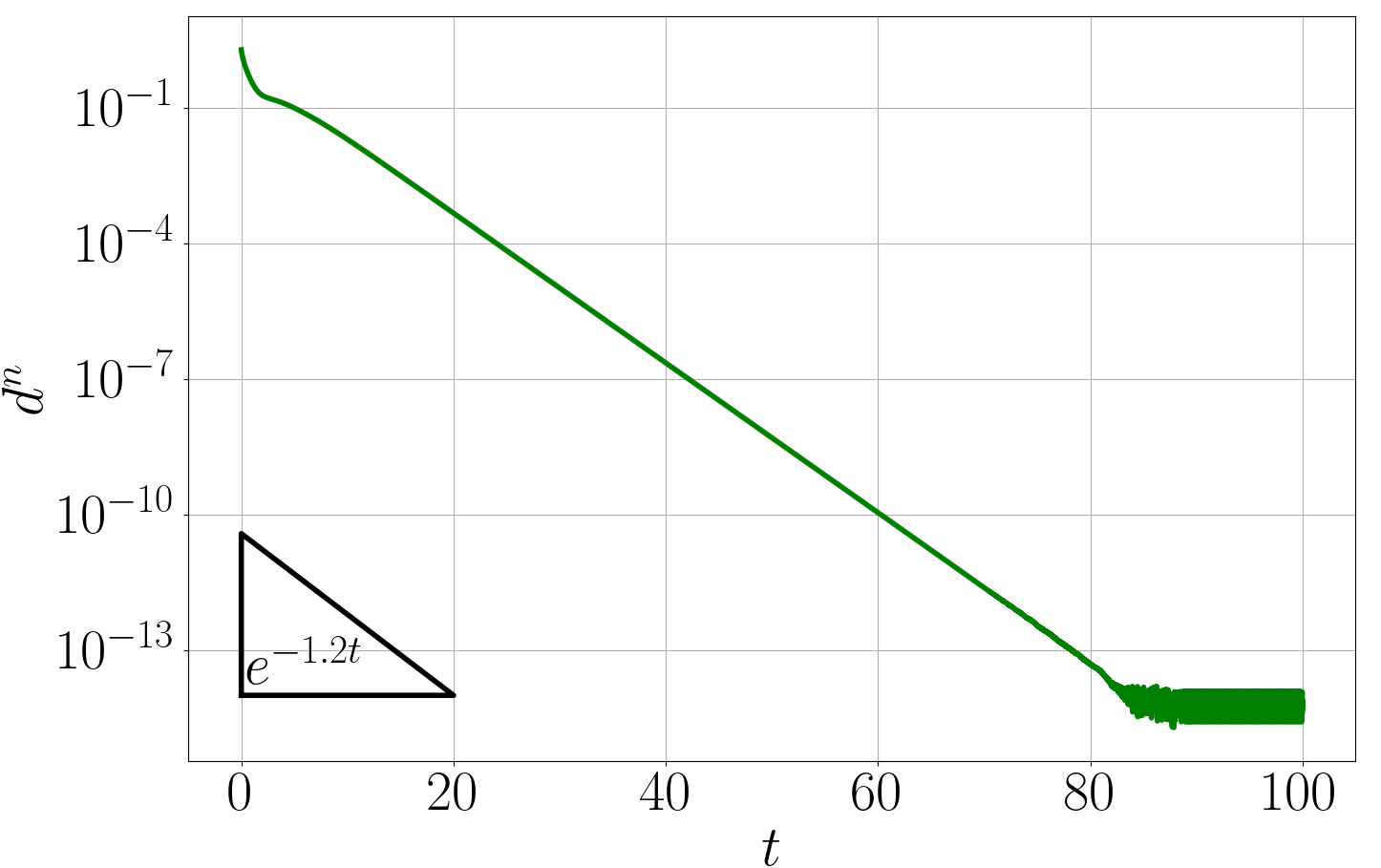}
    \end{minipage}
    \begin{minipage}{0.48\textwidth}
      \includegraphics[scale=0.15]{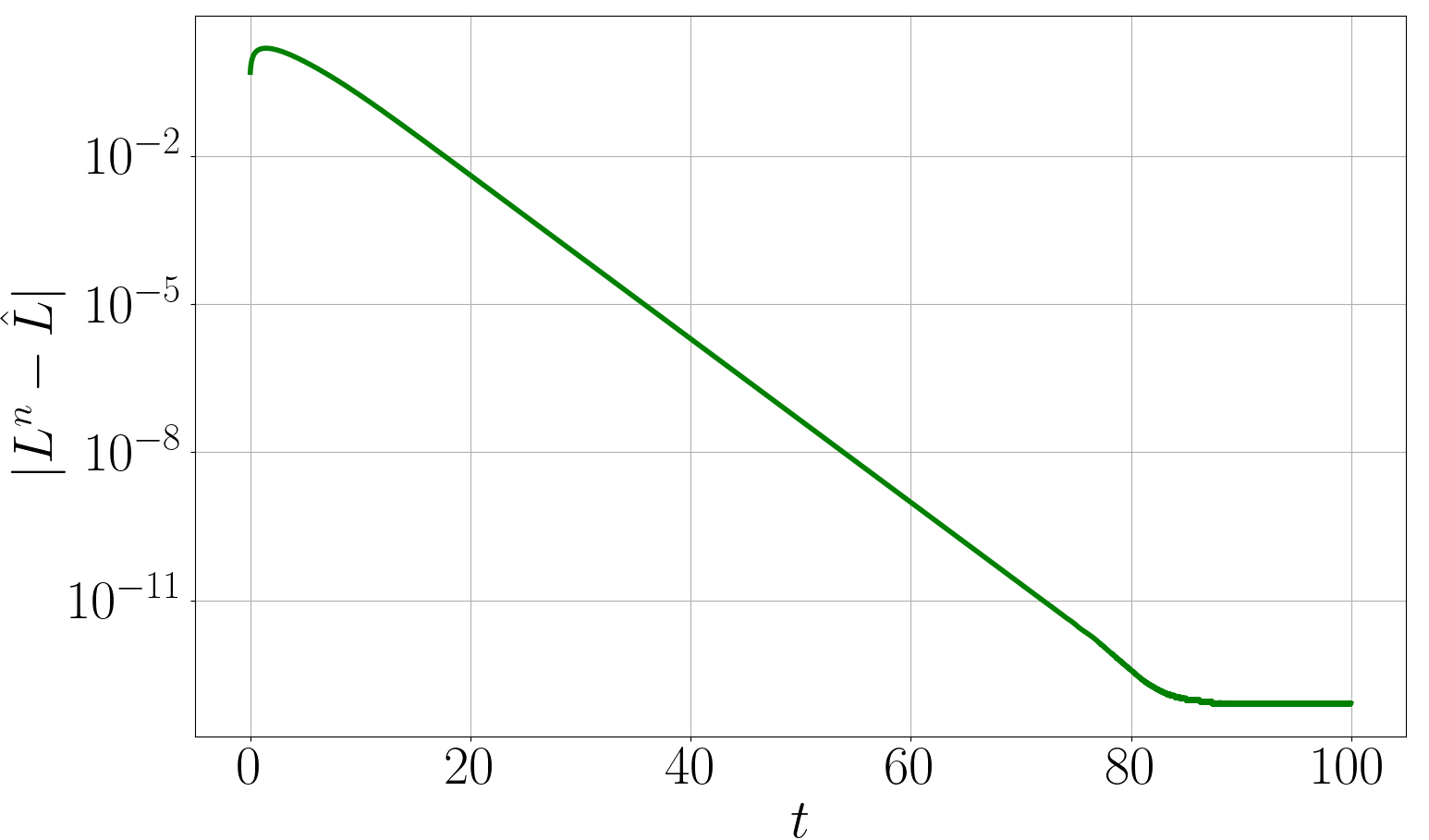}
    \end{minipage}
    \caption{Convergence towards a travelling wave profile of the concentration $u$ (left) and the width of the oxide $L$ (right).}
    \label{fig:dist_log}
\end{figure}

Finally, we compute approximate solutions $(w_{h_k, \dt_k}, X_{0,h_k, \dt_k}, X_{1,h_k, \dt_k})$ on different meshes of $I_k = 50 \times 2^k$ cells and $N_k = 10 \times 4^k$ time steps for $k \in \{0,\dots, 5\}$ for test-case 1. As we already know that the scheme is exact for the travelling wave, which has been shown on Figure~\ref{fig:dist_log} to be the long-time limit the the scheme, we consider a rather small final time $T = 0.2$ to focus on the transient regime. 
The reference solution $(w_\text{ref}, X_{0,\text{ref}}, X_{1,\text{ref}})$ is the one on the finest mesh, i.e. for $k=5$.
Introducing the orthogonal projection  $P_{\Mm_k, \dt_k}: L^2((0,T) \times (0,1)) \to S_{\Mm_k, \dt_k}$, then the error and the associated convergence rate for the profile $w_{h_k, \dt_k}$ is estimated by 
\begin{align*}
\text{err}_{w,k} =\;& \left\|w_{h_k, \dt_k} - P_{\Mm_k, \dt_k} w_\text{ref}\right\|_{L^2((0,T)\times (0,1))},
\\
\text{rate}_{w,k} =\;& \frac{\log(\text{err}_{w,k}) - \log(\text{err}_{w,k-1})}{\log h_k - \log h_{k-1}}.
\end{align*}
Similarly, denoting by $P_{\dt_k}$ the orthogonal projection of $L^2(0,T)$ on the set of piecewise constant functions on time intervals of constant length $\dt_k$, 
we denote by 
\[\text{err}_{\ell,k} = \left\|X_{\ell, h_k, \dt_k} - P_{\dt_k} X_{\ell,\text{ref}} \right\|_{L^\infty(0,T)}, 
\qquad \ell \in \{0,1\}.\]
As the interface location are functions of time only, we choose to compute the order w.r.t. the time discretization, i.e.,  
\[
\text{rate}_{\ell,k} = \frac{\log(\text{err}_{\ell,k}) - \log(\text{err}_{\ell,k-1})}{\log \dt_k - \log \dt_{k-1}}, 
\qquad \ell \in \{0,1\}, 
\]
which is in our case twice smaller than the one w.r.t. the space discretization.
One observes in Table~\ref{tab:conv} that the scheme is second order accurate in space and 
first order accurate in time, as expected from the choice of Scharfetter-Gummel fluxes and of the 
backward Euler time integration scheme. 
\begin{table}[htb]
\caption{Convergence of the scheme under grid refinement}
\centering
\resizebox{\textwidth}{!}{
\begin{tabular}{|c|c|c||c|c||c|c||c|c|}
\hline
$k$ & $h_k$ & $\dt_k$ & $\text{err}_{w,k}$ & $\text{rate}_{w,k}$ & $\text{err}_{0,k}$ 
& $\text{rate}_{0,k}$ & $\text{err}_{1,k}$ & $\text{rate}_{1,k}$ \\
\hline 
0 & $ {2{.}0e{-2}}$ & $ {2{.}0e{-2}}$ & $ {3{.}36e{-3}}$ & -- & $ {7{.}85e{-3}}$ & -- & $ {5{.}40e{-3}}$ & --  \\
1 & $ {1{.}0e{-2}}$ & $ {5{.}0e{-3}}$ & $ {8{.}90e{-4}}$ & 1{.}91 & $ {2{.}22e{-3}}$ & 0{.}90 & $ {1{.}86e{-3}}$ & 0{.}76 \\
2 & $ {5{.}0e{-3}}$ & $ {1{.}25e{-3}}$ & $ {1{.}96e{-4}}$ & 2{.}17 & $ {6{.}17e{-4}}$ & 0{.}92 & $ {5{.}4e{-4}}$ & 0{.}89 \\
3 & $ {2{.}5e{-3}}$ & $ {3{.}125e{-4}}$ & $ {3{.}77e{-5}}$ & 2{.}38 & $ {1{.}56e{-4}}$ & 0{.}99 & $ {1{.}4e{-4}}$ & 0{.}97 \\
4 & $ {1{.}25e{-3}}$ & $ {7{.}81e{-5}}$ & $ {5{.}71e{-6}}$ & 2{.}72 & $ {3{.}18e{-5}}$ & 1{.}14 & $ {2{.}89e{-5}}$ & 1{.}14 \\
\hline 
\end{tabular}}
\label{tab:conv}
\end{table}

\section{Conclusion and prospects}

The work conducted in this paper on a simplified model for corrosion, with moving boundaries, presents both theoretical and numerical results providing rigorous foundations in this simplified setting to phenomena observed numerically on more complex models, especially concerning the crucial role played by travelling waves when such profiles exist. 

The numerical strategy employed in the code CALIPSO~\cite{Calipso}, which also enters the framework of ALE finite volumes, and which also makes use of Scharfetter-Gummel fluxes, is by the way validated. 
Indeed, the existence of a travelling wave profile as presented in Proposition \ref{th:trav_wave}, under some explicit assumption on the kinetic parameters, joins the work that is done in \cite{Breden} and \cite{ChainGall}. Moreover, in this one species case, the expression of the travelling wave is explicit, and this motivates the choice of Scharfetter-Gummel fluxes for the discretization employed in~\cite{Calipso}. This kind of fluxes guarantees the preservation of the travelling wave profile by the numerical scheme, and is compatible with the rich structure of the continuous problem, which admits an infinite number of free energies. 

Beyong the expected convergence orders --$2$ in space and $1$ in time--, the numerical results show the convergence in the long-time regime towards the unique travelling wave. The theoretical analysis of this asymptotic behavior will be conducted in future works. 
We will also extend our results  to richer models for corrosion inspired from~\cite{Bataillon_elec}, including multiple species and self-consistent electric drift. 

\section*{Acknowledgements} The authors acknowledge partial support from the  European Union’s Horizon 2020 research and
innovation programme under grant agreement No 847593 (WP DONUT) and from Labex CEMPI (ANR-11-LABX-0007-01).

\bibliographystyle{plain}
\bibliography{bibliographie}

\begin{thebibliography}{10}

\bibitem{AM_arXiv}
C.~Alamichel and N.~Meunier.
\newblock Existence of traveling wave for a coupled incompressible {D}arcy's
  free boundary model with undercooling effect and surface tension.
\newblock arXiv:2501.04576, 2025.

\bibitem{AGS08}
L.~Ambrosio, N.~Gigli, and G.~Savar{\'e}.
\newblock {\em Gradient flows in metric spaces and in the space of probability
  measures}.
\newblock Lectures in Mathematics ETH Z\"urich. Birkh\"auser Verlag, Basel,
  second edition, 2008.

\bibitem{AMTU01}
A.~Arnold, P.~Markowich, G.~Toscani, and A.~Unterreiter.
\newblock On convex {S}obolev inequalities and the rate of convergence to
  equilibrium for {F}okker-{P}lanck type equations.
\newblock {\em Comm. Partial Differential Equations}, 26(1-2):43--100, 2001.

\bibitem{Bataillon_elec}
C.~Bataillon, F.~Bouchon, C.~Chainais-Hillairet, C.~Desgranges, E.~Hoarau,
  F.~Martin, S.~Perrin, M.~Turpin, and J.~Talandier.
\newblock Corrosion modelling of iron based alloy in nuclear waste repository.
\newblock {\em Electrochim. Acta}, 55(15):4451--4467, 2010.

\bibitem{Calipso}
C.~Bataillon, F.~Bouchon, C.~Chainais-Hillairet, J.~Fuhrmann, E.~Hoarau, and
  R.~Touzani.
\newblock Numerical methods for the simulation of a corrosion model with moving
  oxide layer.
\newblock {\em J. Comput. Phys.}, 231(18), 2012.

\bibitem{BC14}
F.~Bolley and J.~A. Carrillo.
\newblock Nonlinear diffusion: geodesic convexity is equivalent to
  {W}asserstein contraction.
\newblock {\em Comm. Partial Differential Equations}, 39(10):1860--1869, 2014.

\bibitem{Breden}
M.~Breden, C.~Chainais-Hillairet, and A.~Zurek.
\newblock Existence of traveling wave solutions for the {D}iffusion {P}oisson
  {C}oupled {M}odel: a computer-assisted proof.
\newblock {\em ESAIM Math. Model. Numer. Anal.}, 55(4):1669--1697, 2021.

\bibitem{BrenCancHilh}
K.~Brenner, C.~Cancès, and D.~Hilhorst.
\newblock Finite volume approximation for an immiscible two-phase flow in
  porous media with discontinuous capillary pressure.
\newblock {\em Comput. Geosci.}, 17:573--597, 2013.

\bibitem{CCCE_IFB}
C.~Cancès, J.~Cauvin-Vila, C.~Chainais-Hillairet, and V.~Ehrlacher.
\newblock Cross-diffusion systems coupled via a moving interface.
\newblock {\em Interfaces Free Bound.}, 2024.
\newblock Online first.

\bibitem{Cances_ZAMP}
C.~Cancès, C.~Chainais-Hillairet, B.~Merlet, F.~Raimondi, and J.~Venel.
\newblock Mathematical analysis of a thermodynamically consistent reduced model
  for iron corrosion.
\newblock {\em Z. Angew.Math.Phys.}, 74(96), 2023.

\bibitem{CJ05}
P.~Cermelli and M.~Jabbour.
\newblock Multispecies epitaxial growth on vicinal surfaces with chemical
  reactions and diffusion.
\newblock {\em Proc. R. Soc. A}, 461(2063):3483--3504, 2005.

\bibitem{CB08}
C.~Chainais-Hillairet and C.~Bataillon.
\newblock Mathematical and numerical study of a corrosion model.
\newblock {\em Numer. Math.}, 110:1--25, 2008.

\bibitem{CHD11}
C.~Chainais-Hillairet and J.~Droniou.
\newblock Finite-volume schemes for noncoercive elliptic problems with
  {N}eumann boundary conditions.
\newblock {\em IMA J. Numer. Anal.}, 31(1):61--85, 2011.

\bibitem{ChainGall}
C.~Chainais-Hillairet and T.~O. Gallouët.
\newblock Study of a pseudo-stationary state for a corrosion model: existence
  and numerical approximation.
\newblock {\em Nonlinear Anal. Real World Appl.}, 31:38--56, 2016.

\bibitem{Chainais_dcds}
C.~Chainais-Hillairet and I.~Lacroix-Violet.
\newblock On the existence of solutions for a drift-diffusion system arising in
  corrosion modelling.
\newblock {\em DCDS-B}, 20(1):77--92, 2014.

\bibitem{CMZ18}
C.~Chainais-Hillairet, B.~Merlet, and A.~Zurek.
\newblock Convergence of a finite volume scheme for a parabolic system with a
  free boundary modeling concrete carbonation.
\newblock {\em ESAIM Math. Model. Numer. Anal.}, 52(2):457--480, 2018.

\bibitem{Chatard_FVCA6}
M.~Chatard.
\newblock Asymptotic behavior of the {S}charfetter-{G}ummel scheme for the
  drift-diffusion model.
\newblock In {\em Finite volumes for complex applications {VI}. {P}roblems \&
  perspectives. {V}olume 1, 2}, volume~4 of {\em Springer Proc. Math.}, pages
  235--243. Springer, Heidelberg, 2011.

\bibitem{Deimling}
K.~Deimling.
\newblock {\em Nonlinear Functional Analysis}.
\newblock Springer Berlin, Heidelberg, 1985.

\bibitem{ALE}
J.~Donea, A.~Huerta, J.-Ph. Ponthot, and A.~Rodríguez-Ferran.
\newblock Arbitrary {L}agrangian–{E}ulerian methods.
\newblock {\em Encyclopedia of Computational Mechanics}, 2004.

\bibitem{kangourou_2018}
J.~Droniou, R.~Eymard, T.~Gallou\"et, C.~Guichard, and R.~Herbin.
\newblock {\em The Gradient Discretisation Method}, volume~42 of {\em
  Math\'ematiques et Applications}.
\newblock Springer International Publishing, Cham, 2018.

\bibitem{EGH00}
R.~Eymard, T.~Gallou\"et, and R.~Herbin.
\newblock Finite volume methods.
\newblock Ciarlet, P. G. (ed.) et al., in Handbook of numerical analysis.
  North-Holland, Amsterdam, pp. 713--1020, 2000.

\bibitem{GajewGrog}
H.~Gajewski and K.~Gröger.
\newblock Reaction-diffusion processes of electrically charged species.
\newblock {\em Math. Nachr.}, 177:109--130, 1996.

\bibitem{GallLatch}
T.~Gallouët and J.-C. Latché.
\newblock Compactness of discrete approximate solutions to parabolic
  {PDE}s–{A}pplication to a turbulence model.
\newblock {\em Comm. Pure Appl. Anal.}, 11(6), 2012.

\bibitem{Glitz1}
A.~Glitzky.
\newblock Analysis of electronic models for solar cells including energy
  resolved defect densities.
\newblock {\em Math. Methods. App. Sci.}, 34(16):1980--1998, 2011.

\bibitem{Glitz2}
A.~Glitzky.
\newblock An electronic model for solar cells including active interfaces and
  energy resolved defect densities.
\newblock {\em SIAM Journal on Mathematical Analysis}, 44(6):3874--3900, 2012.

\bibitem{GlitzGrogHunl}
A.~Glitzky, K.~Gröger, and R.~Hünlich.
\newblock Free energy and dissipation rate for reaction diffusion processes of
  electrically charged species.
\newblock {\em Appl. Anal.}, 60:201--217, 1996.

\bibitem{Il'in69}
A.~M. Il'in.
\newblock Differencing scheme for a differential equation with a small
  parameter affecting the highest derivative.
\newblock {\em Math. Notes Acad. Sci. USSR}, 6:596–602, 1969.

\bibitem{LMV96}
R.~D. Lazarov, I.~D. Mishev, and P.~S. Vassilevski.
\newblock Finite volume methods for convection-diffusion problems.
\newblock {\em SIAM J. Numer. Anal.}, 33(1):31--55, 1996.

\bibitem{LLRV09}
B.~Li, J.~Lowengrub, A.~Ratz, and A.~Voigt.
\newblock Geometric evolution laws for thin crystalline films: modeling and
  numerics.
\newblock {\em Commun. Comput. Phys.}, 6(3):433, 2009.

\bibitem{Mie11}
A.~Mielke.
\newblock A gradient structure for reaction-diffusion systems and for
  energy-drift-diffusion systems.
\newblock {\em Nonlinearity}, 24(4):1329--1346, 2011.

\bibitem{Mielke}
A.~Mielke.
\newblock Non-equilibrium steady states as saddle points and {EDP}-convergence
  for slow-fast gradient systems.
\newblock {\em Journal of Mathematical Physics}, 64(12), 2023.

\bibitem{PV99}
A.~Pimpinelli and J.~Villain.
\newblock {\em Physics of cristal growth}.
\newblock Cambridge University Press, 1999.

\bibitem{PP10}
J.~W. Portegies and M.~A. Peletier.
\newblock Well-posedness of a parabolic moving-boundary problem in the setting
  of {W}asserstein gradient flows.
\newblock {\em Interfaces Free Bound.}, 12(2):121--150, 2010.

\bibitem{RB23}
V.~Rybalko and L.~Berlyand.
\newblock Emergence of traveling waves and their stability in a free boundary
  model of cell motility.
\newblock {\em Trans. Amer. Math. Soc.}, 376(03):1799--1844, 2023.

\bibitem{ScharfGumm}
D.L. Scharfetter and H.K. Gummel.
\newblock Large-signal analysis of a silicon {R}ead diode oscillator.
\newblock {\em IEEE Transactions on Electron Devices}, 16(1):64--77, 1969.

\end{thebibliography}

\end{document}